\documentclass[10pt,a4paper]{article}
\usepackage[utf8]{inputenc}
\usepackage[english]{babel}
\usepackage[T1]{fontenc}
\usepackage{amsmath}
\usepackage{amsthm}
\usepackage{dsfont}
\usepackage{amsfonts}
\usepackage{amssymb}
\usepackage{graphicx}
\usepackage{tabularx}
\usepackage{lmodern}
\usepackage[colorlinks=true,linkcolor=blue]{hyperref}
\usepackage{relsize}
\usepackage{geometry}
\usepackage{mathtools}
\usepackage{enumitem}
\usepackage{tikz}
\usetikzlibrary{decorations.text}

\geometry{a4paper, top=2.5cm, bottom=3.5cm, left=2.5cm, right=2.5cm, marginparwidth=1.2cm}

\newcommand*{\PP}{\mathbb{P}} 
\newcommand*{\E}{\mathbb{E}} 
\newcommand*{\V}{\mathrm{Var}} 
\newcommand*{\R}{\mathbb{R}} 
\newcommand*{\N}{\mathbb{N}} 
\newcommand*{\Z}{\mathbb{Z}} 
\newcommand*{\C}{\mathbb{C}} 

\newtheoremstyle{theorema}%
    {4pt}
    {4pt}
    {\slshape}
    {}
    {\bfseries}
    {.}
    {.5em}
    {}
\theoremstyle{theorema} 
\newtheorem{theoreme}{Theorem}

\newtheorem{prop}[theoreme]{Proposition}
\newtheorem{lem}[theoreme]{Lemma}

\newtheorem{corollaire}[theoreme]{Corollary}

\theoremstyle{definition} 
\newtheorem{definition}[theoreme]{Definition}
\newtheorem{exemple}[theoreme]{Example}

\theoremstyle{remark} 
\newtheorem*{remarque}{Remark}

\title{Characteristic polynomials of modified permutation matrices at microscopic scale}

\author{Valentin B\textsc{ahier} \thanks{valentin.bahier@math.univ-toulouse.fr, Institut de Math\'ematiques de Toulouse, 118 route de Narbonne, F-31062 Toulouse Cedex 9, France.}}

\date{\today}

\begin{document}
\maketitle

\begin{abstract}
We study the characteristic polynomial of random permutation matrices following some measures which are invariant by conjugation, including Ewens' measures which are one-parameter deformations of the uniform distribution on the permutation group. We also look at some modifications of permutation matrices where the entries equal to one are replaced by i.i.d uniform variables on the unit circle. Once appropriately normalized and scaled, we show that the characteristic polynomial converges in distribution on every compact subset of $\mathbb{C}$ to an explicit limiting entire function, when the size of the matrices goes to infinity. Our findings can be related to results by Chhaibi, Najnudel and Nikeghbali on the limiting characteristic polynomial of the Circular Unitary Ensemble \cite{chhaibi2017circular}.
\end{abstract}

\section{Introduction}

\subsection{Convergence of characteristic polynomials}
\label{sec:intro1}

Characteristic polynomials of random matrices have drawn much interest the last few decades. These objects encode the information of the whole spectrum of matrices. Moreover, in the case of unitarily invariant matrices (as Gaussian Unitary Ensemble or Circular Unitary Ensemble), the characteristic polynomial is believed to have a similar microscopic behavior as holomorphic functions which appear in number theory, as the Riemann zeta function. The characteristic polynomial of random matrices is also related to Gaussian fields, including the Gaussian multiplicative chaos introduced by Kahane \cite{kahane1985chaos}.

On the macroscopic scale, Keating and Snaith \cite{keating2000random}, Hugues Keating and O-Connell \cite{hughes2001characteristic}, and then Bourgade Hugues Nikeghbali and Yor \cite{bourgade2008characteristic} study the logarithm of characteristic polynomial of unitary matrices following the Haar distribution, and prove in particular that its real and imaginary parts normalized by $\sqrt{\frac{1}{2} \log n}$ converge jointly in law to independent centred and reduced Gaussian random variables. \\
Hambly, Keevash, O-Connell and Stark \cite{hambly2000characteristic} give a similar result for permutation matrices following the uniform measure. Zeindler \cite{zeindler2010permutation} \cite{zeindler2013central} generalizes this result for permutation matrices under Ewens measures, considering more general class functions than the characteristic polynomial, the so-called multiplicative class functions. 
Dehaye and Zeindler \cite{dehaye2013averages}, and Dang and Zeindler \cite{dang2014characteristic} extend the study to some Weyl groups, and some wreath products involving the symmetric group.

On the microscopic scale, Chhaibi, Najnudel and Nikeghbali \cite{chhaibi2017circular} show that the characteristic polynomial of unitary matrices following the Haar measure, suitably renormalized, converges to a limiting entire function. With the coupling of virtual isometries introduced by Bourgade, Najnudel and Nikeghbali \cite{bourgade2012unitary}, the authors get an almost sure convergence.
Chhaibi, Hovhannisyan, Najnudel, Nikeghbali, and Rodgers \cite{chhaibi2017limiting} extend the study to the special orthogonal group, the symplectic group, and give a related result for the Gaussian Unitary Ensemble.

Our motivation in this paper is to prove similar results on the characteristic polynomial of some particular unitary matrices related to random permutations. More precisely:
\begin{itemize}
	\item We focus on matrices belonging to two particular subgroups of the unitary group: the set of permutation matrices, and the wreath product $S^1 \wr \mathfrak{S}_n$ (which can be seen as the set of permutation matrices where entries equals to one are replaced by complex numbers of modulus one).
	\item We tackle a large family of measures on the symmetric group, which are invariant by conjugation and verify a certain property of decay over the cycle lengths. This family includes the family of Ewens measures, as we shall see.
	\item We introduce a coupling method for generating sequences of modified permutations under these particular measures, by analogy of the notion of virtual isometries introduced in \cite{bourgade2012unitary}. This coupling provides an almost sure convergence in our main result given below.
\end{itemize}

\subsubsection*{Notations}

For all events $A$ and all random variables $y$, we will denote by $\PP_y (A) := \E (\mathds{1}_A \ \vert \ y)$ the conditional expectation of $\mathds{1}_A$ given $y$. \\
We will write $X_n \overset{d}{\underset{n\to\infty}{\longrightarrow}} X$ for the convergence in distribution of the sequence of random variables $(X_n)$ to the random variable $X$. \\
We will use the arrow $\Longrightarrow$ to denote the convergence in law on the space of continuous functions from $\C$ to $\C$ equipped with the topology of uniform convergence on compact sets. \\
Finally, for all real numbers $x$, $\{x\}=x-\lfloor x \rfloor$ will denote the fractional part of $x$, and $\Vert x \Vert$ the distance from $x$ to the nearest integer. 

\subsection{Main results and outline of the paper}
\label{sec:intro2}

Let $\sigma = (\sigma_n)_{n\geq 1}$ be a \emph{random virtual permutation} (we give the definition in the next section). Let $(M_n)_{n\geq 1}$ be the sequence of random permutation matrices associated to $\sigma$, that is to say for each $n$ we define $M_n$ as the $n \times n$ matrix whose coordinates are given by 
\begin{equation}
\forall 1\leq i,j \leq n, \ (M_n)_{i,j} := \mathds{1}_{i = \sigma_n (j)}.
\end{equation}
Let $(\widetilde{M}_n)_{n\geq 1}$ be the \emph{random modified virtual permutation} generated by $\sigma$ and a sequence $(u_j)_{j\geq 1}$ of i.i.d uniform variables on the unit circle independent of $\sigma$ (see Corollary~\ref{cor:couplage}).

For all $n\in \N^*$ and $z \in \C$, we consider the characteristic polynomials of $M_n$ and $\widetilde{M}_n$, respectively defined by
\begin{equation}\label{eq:Z}
Z_n (z) := \det (zI - M_n)
\end{equation}
and 
\begin{equation}\label{eq:Ztilde}
\widetilde{Z}_n (z) := \det (zI - \widetilde{M}_n).
\end{equation}

We are interested in the behavior as $n$ goes to infinity of 
\begin{equation}\label{eq:xi1}
\widetilde{\xi}_n (z) = \frac{\widetilde{Z}_n (\mathrm{e}^{2i \pi z/n})}{\widetilde{Z}_n (1)}
\end{equation}
and 
\begin{equation}\label{eq:xialpha1}
\xi_{n,\alpha} (z) = \frac{Z_n \left(\mathrm{e}^{2i \pi \left(\frac{z}{n} + \alpha\right)}\right)}{Z_n \left( \mathrm{e}^{2i \pi \alpha} \right)}
\end{equation}
where $\alpha$ is an irrational number between $0$ and $1$.

It follows from the preliminary results given in the next section that $\widetilde{\xi}_n$ can be written with the help of the normalized cycle-lengths $(y_j^{(n)})_{j\geq 1}$ (see \eqref{eq:ell} and \eqref{eq:yjn}) and the uniform variables $(u_j)_{j\geq 1}$ as
\begin{equation}\label{eq:xi2}
\widetilde{\xi}_n (z) = \prod_{\substack{j\geq 1 \\ \ell_{n,j}>0}} \frac{\mathrm{e}^{2i\pi z y_j^{(n)}} - u_j}{1-u_j}, 
\end{equation}
and similarly, $\xi_{n,\alpha}$ can be written as
\begin{equation}\label{eq:xialpha2}
\xi_{n,\alpha } = \prod_{\substack{j\geq 1 \\ \ell_{n,j}>0}} \frac{\mathrm{e}^{2i\pi \left(\frac{z}{n} + \alpha \right)\ell_{n,j}} -1}{\mathrm{e}^{2i\pi \alpha \ell_{n,j}} -1} .
\end{equation}
We will give more details about these expressions in Section~\ref{sec:quotient}.

Finally, let us recall that the \textbf{type} of any real number $x$ is defined by
\begin{equation}\label{eq:type}
\eta=\sup\{ \gamma \in \R : \ \liminf_{n\to +\infty} n^\gamma \parallel n x \parallel = 0 \} \in \R_+ \cup \{+\infty\}. 
\end{equation}
We say that $x$ is of \textbf{finite type} if $\eta$ is finite. A basic property is that if $x$ is irrational then its type is greater or equal to one (and can be infinite). See \emph{e.g.} \cite{hambly2000characteristic} for a little more details about finite type.

The main result of the present paper is the following:

\begin{theoreme}\label{thm:convergence}
Let $p$ be a distribution on $\nabla^\prime$ (see \eqref{eq:nablap}) with exponential decay (in the sense of Definition~\ref{def:expdec}). Assume that $\sigma$ is generated by a random vector $(y_1, y_2, \dots )$ following $p$. Then we have the following convergences:
\begin{enumerate}[label=(\roman*)]
	\item Almost surely, $\widetilde{\xi}_n$ converges uniformly on every compact set to an entire function $\widetilde{\xi}_\infty$ defined by 
\[\widetilde{\xi}_\infty (z) = \prod_{j=1}^{+\infty} \frac{\mathrm{e}^{2i\pi z y_j} - u_j}{1-u_j} .\]
	\item Assume $\alpha$ is an irrational number of finite type. Then 
\[\xi_{n,\alpha } \underset{n\to \infty}{\Longrightarrow} \widetilde{\xi}_\infty \]
where $\widetilde{\xi}_\infty$ is the same entire function as above. 
\end{enumerate}
\end{theoreme}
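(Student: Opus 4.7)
The overall plan is to first establish that the candidate limit $\widetilde{\xi}_\infty$ is almost surely a well-defined entire function, and then to deduce the convergence of both $\widetilde{\xi}_n$ and $\xi_{n,\alpha}$ via a common truncation plus tail-control scheme.

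\emph{Existence of $\widetilde{\xi}_\infty$ and proof of (i).} The elementary identity
\[
\frac{\e^{2i\pi z y_j}-u_j}{1-u_j}-1 \;=\; \frac{\e^{2i\pi z y_j}-1}{1-u_j},
\]
together with the bound $|\e^{2i\pi z y_j}-1|\le 2\pi R\,y_j$ on $\{|z|\le R\}$, reduces uniform convergence of the product on compacts to summability of $\sum_j y_j/|1-u_j|$. The exponential decay of $p$ should yield $\sum_j y_j^{1/2}<\infty$ almost surely. Conditioning on $\sigma$ and using that the $u_j$ are independent of $\sigma$ and uniform on $S^1$, one has $\PP_\sigma(|1-u_j|<y_j^{1/2})\le C\,y_j^{1/2}$, and a conditional Borel--Cantelli argument gives $|1-u_j|\ge y_j^{1/2}$ for all but finitely many $j$, so $\sum_j y_j/|1-u_j| \le C_R\sum_j y_j^{1/2}<\infty$ on a full-measure event. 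This defines $\widetilde{\xi}_\infty$ and, together with the virtual-permutation coupling of Corollary~\ref{cor:couplage} (which gives $y_j^{(n)}\to y_j$ almost surely for every $j$ and, under the coupling, a bound of the form $y_j^{(n)}\le C y_j$ for $j\ge J_0(\omega)$), yields part~(i) by a standard truncation argument: the finite products over $j\le J$ converge uniformly on compacts by continuity, while the tails are made small in $n$ uniformly on compacts by the Borel--Cantelli estimate above.

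\emph{Part (ii).} The key algebraic step is the rewriting
\[
\xi_{n,\alpha}(z) \;=\; \prod_{j\,:\,\ell_{n,j}>0} \frac{\e^{2i\pi z y_j^{(n)}}-v_{n,j}^{-1}}{1-v_{n,j}^{-1}},\qquad v_{n,j}:=\e^{2i\pi\alpha\ell_{n,j}},
\]
obtained by dividing numerator and denominator of \eqref{eq:xialpha2} by $\e^{2i\pi\alpha\ell_{n,j}}$. This puts $\xi_{n,\alpha}$ into the same functional form as $\widetilde{\xi}_n$, with the i.i.d.\ uniforms $(u_j)$ replaced by the $\sigma$-measurable sequence $(v_{n,j}^{-1})$. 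Because $\alpha$ is irrational of finite type and $\ell_{n,j}\sim n y_j\to\infty$ almost surely whenever $y_j>0$, a quantitative Weyl equidistribution argument (Erdos--Turan inequality exploiting $\|\alpha m\|\gtrsim m^{-\eta-\varepsilon}$) should show that, for each fixed $J$, the joint distribution of $(y_j^{(n)}, v_{n,j})_{j\le J}$ converges to that of $(y_j, U_j)_{j\le J}$, where the $U_j$ are i.i.d.\ uniform on $S^1$ and independent of $(y_j)$. The convergence $\xi_{n,\alpha}\Longrightarrow\widetilde{\xi}_\infty$ then follows by combining the continuous mapping theorem applied to the finite truncated product with a tail estimate in the same spirit as in part~(i).

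\emph{Main obstacle.} The delicate point is the tail control in part~(ii): since both $v_{n,j}$ and $y_j^{(n)}$ are $\sigma$-measurable, no independence is available to lower bound $|1-v_{n,j}^{-1}|$ via Borel--Cantelli as in part~(i). I anticipate having to combine the universal pointwise bound $|1-v_{n,j}^{-1}|\gtrsim \ell_{n,j}^{-\eta-\varepsilon}$ stemming from the finite type of $\alpha$ with the exponential decay of $y_j$, and to choose a truncation $J=J(n)$ growing slowly in $n$ (logarithmic growth should suffice) so as to absorb the resulting polynomial factor in $n$ and push the tail contribution to zero in probability.
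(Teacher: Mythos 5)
Your overall architecture (truncation at a finite index plus a uniform-in-$n$ tail estimate, and in (ii) the rewriting $\xi_{n,\alpha}(z)=\prod_j\bigl(\e^{2i\pi z y_j^{(n)}}-v_{n,j}^{-1}\bigr)/\bigl(1-v_{n,j}^{-1}\bigr)$ with $v_{n,j}=\e^{2i\pi\alpha\ell_{n,j}}$, together with the joint convergence of $(y_j^{(n)},v_{n,j})_{j\le J}$ to $(y_j,U_j)_{j\le J}$ with i.i.d.\ uniform $U_j$) is the same as the paper's, but two steps you rely on do not hold as stated. First, in part (i) the asserted coupling bound ``$y_j^{(n)}\le C y_j$ for all $n$ and all $j\ge J_0(\omega)$'' is false: conditionally on $y$, the first time $T_j$ at which a point falls in $\mathcal{C}_j$ is geometric of parameter $y_j$ and $y_j^{(T_j)}=1/T_j$, so $\PP_y\bigl(\sup_n y_j^{(n)}>Cy_j\bigr)\ge\PP_y\bigl(T_j\le \tfrac{1}{Cy_j}\bigr)\approx 1-\e^{-1/C}$ is bounded below uniformly in $j$, and almost surely infinitely many $j$ violate the bound for any fixed $C$. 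The correct uniform-in-$n$ statement is the one the paper proves by a Chernoff--Borel--Cantelli argument (Lemma~\ref{lem2}): $\sup_n y_j^{(n)}\le C_2\rho^j$ only for $\rho>\sqrt r$, with a genuine loss from $r$ to $\sqrt r$. This interacts badly with your choice of threshold for the denominators: with $|1-u_j|\ge y_j^{1/2}\sim r^{j/2}$ the dominating series $\sum_j \sup_n y_j^{(n)}/|1-u_j|$ is of order $\sum_j(\rho/\sqrt r)^j$ with $\rho>\sqrt r$, which diverges. The repair is to use a polynomial lower bound $|1-u_j|\ge C_1 j^{-\beta}$ (Borel--Cantelli with $\PP(|1-u_j|<j^{-\beta})\ll j^{-\beta}$, as in Lemma~\ref{lem1}); then $\sup_n y_j^{(n)}/|1-u_j|\ll j^{\beta}\rho^j$ is summable and your dominated-convergence/normal-convergence argument closes.

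Second, and more seriously, the tail control in part (ii) as you propose it fails in the intermediate range of indices. With $J(n)=c\log n$ and $c$ large the far tail $j>J(n)$ is indeed harmless (with high probability $\ell_{n,j}=0$ there), but your convergence-in-law input is for a \emph{fixed} truncation $k$, so you must show that the product over $k<j\le J(n)$ is close to $1$ uniformly in $n$ as $k\to\infty$. For such $j$ one has $\ell_{n,j}\asymp ny_j\to\infty$, and the worst-case Diophantine bound $\Vert\alpha\ell_{n,j}\Vert\gtrsim\ell_{n,j}^{-\nu}$ only gives a per-factor error of size $|z|\,y_j^{(n)}/\Vert\alpha\ell_{n,j}\Vert\lesssim |z|\,y_j\,(ny_j)^{\nu}$, which blows up polynomially in $n$ for each fixed $j$; no choice of $J(n)$ absorbs this, because the problematic indices are the small ones, not the far tail. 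What is actually needed is a probabilistic statement that $\Vert\alpha\ell_{n,j}\Vert$ is \emph{rarely} small, uniformly in $n$: since $\ell_{n,j}$ is conditionally binomial, and since finite type makes the set $\{\ell:\Vert\alpha\ell\Vert\le\delta\}$ well separated (gaps $\gtrsim\delta^{-1/\nu}$), an anti-concentration bound for binomials on sparse sets yields $\sup_n\PP_y\bigl(\Vert\alpha\ell_{n,j}\Vert\le s^j\bigr)\ll s^{j/(2\nu)}$; this is exactly the paper's Lemma~\ref{lem:distanceInteger}, which feeds both the uniform tail estimate (Proposition~\ref{prop:convpart2}) and the tightness of $\sup_K|\xi_{n,\alpha}|$ (Proposition~\ref{prop:convpart3}) that you also need in order to upgrade pointwise convergence in law to convergence in $\mathcal{C}(\C,\C)$ for the uniform-on-compacts topology. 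Note also that your Erd\H{o}s--Tur\'an step is invoked only for the fixed-$J$ marginal convergence, where plain irrationality of $\alpha$ already suffices (the paper does it by a direct characteristic-function computation for the multinomial); the finite-type hypothesis is genuinely used only in the uniform-in-$n$ tail and tightness control, which is precisely the part your proposal leaves open.
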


\begin{remarque}
Without the coupling of the modified virtual permutation, the theorem still holds replacing the first point by: 
\begin{enumerate}
	\item[(\rm{i}${}^\prime$)]  
	\[\widetilde{\xi}_n  \underset{n\to \infty}{\Longrightarrow} \widetilde{\xi}_\infty . \]
\end{enumerate}
\end{remarque}

\begin{remarque}
Note that the parameter $\alpha$ is not allowed to be rational, otherwise some denominators in the product expression of $\xi_{n,\alpha}$ could be zeros. Moreover, heuristically, the motivation to take $\alpha$ irrational of finite type is to avoid a too fast accumulation of small denominators.
\end{remarque}

The article is organized as follows: In Section~\ref{sec:gen}, we set out our preliminary definitions and results for generating sequences of random permutations and sequences of modified permutation matrices. In Section~\ref{sec:quotient}, we give a proof of Theorem~\ref{thm:convergence} by showing the first point in Subsection~\ref{sec:quotientMPM}, and the second point in Subsection~\ref{sec:quotientPM}. These two subsections are mutually independent. In Section~\ref{sec:properties}, we give some estimates on the limiting function $\widetilde{\xi}_\infty$, and compare our results to the unitary case presented in \cite{chhaibi2017circular}. Finally, in Section~\ref{sec:moregeneral}, we extend the study to more general central measures, removing the restriction to $\nabla^\prime$ for the support of their corresponding distributions on $\nabla$ (see \eqref{eq:nabla}).

\section{Generating random permutations}
\label{sec:gen}

Before giving the construction of the random permutations we will deal with, let us recall the few following definitions and facts: \\
A \textbf{virtual permutation} is a sequence $(\sigma_n)_{n\geq 1}$ where for each $n$, $\sigma_n$ is an element of $\mathfrak{S}_n$ which can be derived from $\sigma_{n+1}$ by simply removing the element $n+1$ from the decomposition into disjoint cycles of $\sigma_{n+1}$. Let $\mathfrak{S}$ denote the space of virtual permutations. 
There is a canonical projection of a measure $\mathcal{L}$ on $\mathfrak{S}$ to a measure $\mathcal{L}_n$ on $\mathfrak{S}_n$. We call $\mathcal{L}$ \textbf{central} if and only if each $\mathcal{L}_n$ is central, that is to say
\[\forall n\geq 1 , \ \forall \tau \in \mathfrak{S}_n , \ \sigma_n \overset{d}{=} \tau \sigma_n \tau^{-1}.\]

For each $n$, it is easy to notice that every central measure on $\mathfrak{S}_n$ can be fully described by a distribution on the set 
\[\nabla^{(n)}:=\left\{(\ell_1, \dots , \ell_n) \in \N^n : \ \ell_1 \geq \ell_2 \geq \cdots \geq \ell_n , \sum\limits_{j=1}^n \ell_j = n  \right\}\]
of partitions of the integer $n$, and conversely, in such a way that there is a one-to-one correspondence. A highly less obvious result (Theorem~2.3 in \cite{olshanski2011random}) is that there exists a natural one-to-one correspondence between the central measures on $\mathfrak{S}$ and the probability measures on 
\begin{equation}\label{eq:nabla}
\nabla:= \left\{ (\lambda_1, \lambda_2 , \dots ) \in [0,1]^\infty : \ \lambda_1 \geq \lambda_2 \geq \dots , \ \sum\limits_{j=1}^{+\infty} \lambda_j \leq 1 \right\}.
\end{equation}

The following definition introduces a new notion which specifies the family of measures we are going to consider in the paper. 
\begin{definition}\label{def:expdec}
Let $p$ be a probability measure on $\nabla$.
\begin{itemize}
	\item We say that $p$ is a \textbf{measure with exponential decay} if it satisfies the following property: There exists $r\in (0,1)$ and $\nabla_1 \subset \nabla$ with $p(\nabla_1)=1$, such that for all $\lambda =(\lambda_1, \lambda_2 , \dots ) \in \nabla_1$, 
\begin{equation}\label{eq:expdecayrate}
\exists C>0, \ \forall j \geq 1, \ \lambda_j\leq C r^j.
\end{equation}
	\item We say that a distribution on $\mathfrak{S}$ is a \textbf{central measure with exponential decay} if its corresponding distribution on $\nabla$ is a measure with exponential decay.
\end{itemize}
\end{definition} 

\begin{exemple}
The \textbf{Ewens measure} \cite{ewens1972sampling} of any arbitrary parameter $\theta>0$ on $\mathfrak{S}$, denoted by $\mathrm{Ewens}(\theta)$, is a central measure with exponential decay. \\
Indeed, first recall that, given $\theta>0$, one can define $\mathrm{Ewens}(\theta)$ on $\mathfrak{S}$ thanks to the family of Ewens measures of parameter $\theta$ on $\mathfrak{S}_n$, $n\geq 1$, denoted by $\mathrm{Ewens}(n,\theta)$, and defined by the probability functions
\begin{equation}
\forall \sigma \in \mathfrak{S}_n, \ \PP_\theta^{(n)} (\sigma ) = \frac{\theta^{K(\sigma)}}{\theta (\theta + 1) \cdots (\theta + n-1 )},
\end{equation}
where $K(\sigma)$ denotes the total number of cycles of $\sigma$ once decomposed into disjoint cycles. More precisely, the sequence of measures $(\mathrm{Ewens}(n,\theta))_{n\geq 1}$ is coherent with the projections $\mathfrak{S}_{n+1} \to \mathfrak{S}_n$. In other words, if $\sigma_{n+1}$ follows $\mathrm{Ewens}(n+1,\theta)$, then the random permutation obtained by removing the element $n+1$ from the cycle-decomposition of $\sigma_{n+1}$ follows $\mathrm{Ewens}(n,\theta)$. \\
For each $\theta>0$, the fact that $\mathrm{Ewens}(\theta)$ is central on $\mathfrak{S}$ immediately derives from the fact that $\mathrm{Ewens}(n,\theta)$ is central on $\mathfrak{S}_n$ for all $n$. It is also well-known that the corresponding distribution on $\nabla$ of the central measure $\mathrm{Ewens}(\theta)$ is the \textbf{Poisson-Dirichlet distribution} of parameter $\theta$ (denoted by $\mathrm{PD}(\theta)$). \\
Let $y=(y_1, y_2, \dots)$ be a random vector following $\mathrm{PD}(\theta)$. We know that $Y$ has the same distribution as the order statistics $(Y_{(1)}, Y_{(2)}, \dots )$ of the random vector $Y=(Y_1, Y_2, \dots )$ defined as follows: let $(V_k)_{k\geq 1}$ be a sequence of i.i.d $\mathrm{Beta}(1,\theta)$ random variables (with density function given by $x\mapsto \theta (1-x)^{\theta -1} \mathds{1}_{(0,1)}(x)$). For all $j\geq 2$, define $Y_j:=V_j \prod\limits_{k=1}^{j-1} (1- V_k)$, and $Y_1:=V_1$. The distribution of $Y$ is called $\mathrm{GEM}(\theta )$. In the literature, this method for generating such a vector $Y$ with i.i.d random variables $(V_k)$ is called \textbf{residual allocation model} \cite{patil1977diversity} or \textbf{stick-breaking process} \cite{kerov1997stick}. With this representation it is easy to compute that for all $j$, 
\[\E (Y_j) =  \frac{1}{1+\theta} \left( \frac{\theta}{1+\theta} \right)^{j-1} \leq r_\theta^j \]
with $r_\theta:= \frac{1 \vee \theta}{1+\theta} <1$. Hence for any arbitrary $r\in (r_\theta,1)$,
\[\PP(Y_j > r^j) \leq \frac{\E (Y_j)}{r^j} \leq \left(\frac{r_\theta}{r}\right)^j,\]
which is summable over $j$, and then the Borel-Cantelli lemma applies and gives that the number of $j$ such that $Y_j > r^j$ is almost surely finite. In other words, there exists a random number $C>0$ such that for all $j$, $Y_j \leq C r^j$. Finally, coming back to $y$ it remains to see that the same kind of inequality holds for its coordinates, which is a direct consequence of the fact that for all $j$ we have $Y_{(j)} \leq \left((C r^k)_{k\geq 1}\right)_{(j)} = C r^j$. Then the Ewens measure is a central measure with exponential decay.
\end{exemple}

\begin{remarque}
Note that the Ewens measures are particular central measures whom corresponding distributions on $\nabla$ are supported on 
\begin{equation}\label{eq:nablap}
\nabla^\prime :=\left\{ (\lambda_1, \lambda_2 , \dots ) \in [0,1]^\infty : \ \lambda_1 \geq \lambda_2 \geq \dots , \ \sum\limits_{j=1}^{+\infty} \lambda_j = 1 \right\} \subset \nabla.
\end{equation}
In the main body of the paper, we focus on central measures with exponential decay on $\mathfrak{S}$ whose corresponding distributions on $\nabla$ are supported on $\nabla^\prime$. 
\end{remarque}

Now, let us present the coupling we consider for generating random permutations, which is highly inspired from \cite{tsilevich1997distribution}, \cite{najnudel2013distribution}, and \cite{najnudel2014flow}. \\
Let $\lambda = (\lambda_j)_{j\geq 1}$ be an element of $\nabla^\prime$, and let $E_\lambda = \bigsqcup\limits_{j=1}^\infty \mathcal{C}_j$ be the disjoint union of circles $\mathcal{C}_j$, where for all $j$, $\mathcal{C}_j$ has perimeter $\lambda_j$. Let $x=(x_k)_{k\geq 1} \in (E_\lambda )^{\infty}$. For all $n\geq 1$, one defines a permutation $\sigma_n (\lambda , x) \in \mathfrak{S}_n$ as follows: for all $k \in \{ 1, \dots , n\}$, there exists a unique $j$ such that $x_k \in \mathcal{C}_j$. Let us follow the circle $\mathcal{C}_j$, counterclockwise, starting from $x_k$. The image of $k$ by $\sigma_n (\lambda ,x)$ is the index of the first point in $\{x_1, \dots , x_n\}$ we encounter after $x_k$. In particular, if $x_k$ is the only point in $\mathcal{C}_j$ and $\{x_1, \dots , x_n\}$, then $k$ is a fixed point of $\sigma_n (\lambda ,x)$, because starting from $x_k$ we do a full turn of the circle $\mathcal{C}_j$ before encountering $x_k$ again. To illustrate, if the $\lambda_j$ equal $2^{-j}$ and if the six first $x_k$ are distributed on $E_\lambda$ as shown
\begin{center}
\begin{tikzpicture}
\draw (0,0) circle (1) ;
\draw (2,0) circle (0.5) ;
\draw (4,0) circle (0.25) ;
\draw (6,0) circle (0.125) ;
\draw (8,0) circle (0.0625) ;
\draw (9,0) node {$\cdots$} ;
\draw (0,-1) node[below] {$\mathcal{C}_1$} ;
\draw (2,-1) node[below] {$\mathcal{C}_2$} ;
\draw (4,-1) node[below] {$\mathcal{C}_3$} ;
\draw (6,-1) node[below] {$\mathcal{C}_4$} ;
\draw (8,-1) node[below] {$\mathcal{C}_5$} ;
\draw (9,-1) node[below] {$\cdots$} ;
\draw (20:1) node {$\bullet$} node[above right] {$x_2$};
\draw (175:1) node {$\bullet$} node[left] {$x_6$};
\draw (-110:1) node {$\bullet$} node[above right] {$x_4$};
\draw (55:0.5)+(2,0) node {$\bullet$} node[above right] {$x_1$};
\draw (-40:0.5)+(2,0) node {$\bullet$} node[below right] {$x_3$};
\draw (130:0.125)+(6,0) node {$\bullet$} node[above left] {$x_5$};
\end{tikzpicture}
\end{center}
then, 
\begin{align*}
\sigma_1 (\lambda , x) & = (1) \\
\sigma_2 (\lambda , x) & = (1)(2) \\
\sigma_3 (\lambda , x) & = (1 \ 3)(2) \\
\sigma_4 (\lambda , x) & = (1 \ 3)(2 \ 4) \\
\sigma_5 (\lambda , x) & = (1 \ 3)(2 \ 4)(5) \\
\sigma_6 (\lambda , x) & = (1 \ 3)(2 \ 6 \ 4)(5).
\end{align*}
The key feature of this construction is highlighted in the following proposition.
\begin{prop}\label{prop:couplage}
The sequence $\sigma_\infty (\lambda, x)=(\sigma_n (\lambda, x))_{n\geq 1}$ is a virtual permutation. Moreover, if $\lambda$ follows any arbitrary distribution $p$ on $\nabla^\prime$, and if conditionally on $\lambda$ the points $x_k$ are i.i.d following the uniform distribution on $E_\lambda$, then $\sigma_\infty (\lambda, x)$ follows the central measure on $\mathfrak{S}$ corresponding to $p$.
\end{prop}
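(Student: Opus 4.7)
The plan is to address the two claims of the proposition in turn. The first, that $\sigma_\infty(\lambda,x)$ is a virtual permutation, is a deterministic combinatorial fact about the construction. Fix $n \geq 1$ and let $\mathcal{C}_j$ be the circle containing $x_{n+1}$. If no other $x_k$ with $k \leq n$ lies on $\mathcal{C}_j$, then $n+1$ is a fixed point of $\sigma_{n+1}$, every other cycle of $\sigma_{n+1}$ coincides with the corresponding cycle of $\sigma_n$, and erasing the trivial cycle $(n+1)$ recovers $\sigma_n$. Otherwise, list $\{x_k : k \leq n,\ x_k \in \mathcal{C}_j\}$ as $x_{k_1}, \ldots, x_{k_m}$ in counterclockwise order starting from some reference point: then $\sigma_n$ acts on this cycle as $k_1 \mapsto k_2 \mapsto \cdots \mapsto k_m \mapsto k_1$. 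The point $x_{n+1}$ is inserted between some $x_{k_a}$ and $x_{k_{a+1}}$, so $\sigma_{n+1}$ replaces this cycle with $k_1 \mapsto \cdots \mapsto k_a \mapsto (n+1) \mapsto k_{a+1} \mapsto \cdots \mapsto k_m \mapsto k_1$; deleting $n+1$ from the latter reconnects $k_a$ to $k_{a+1}$ and recovers $\sigma_n$.

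For the distributional claim I would work conditionally on $\lambda \in \nabla^\prime$ and exploit the two-step nature of the construction. Let $\Pi^{(n)}$ be the random partition of $\{1,\ldots,n\}$ whose blocks are the nonempty sets $\{k \leq n : x_k \in \mathcal{C}_j\}$. Since the $x_k$ are i.i.d.\ uniform on $E_\lambda$ and $\sum_j \lambda_j = 1$, the labelled block sizes of $\Pi^{(n)}$ are multinomially distributed with parameters $n$ and $(\lambda_j)_{j\geq 1}$, and conditionally on $\Pi^{(n)}$ the points on each $\mathcal{C}_j$ remain i.i.d.\ uniform on that circle. Their counterclockwise cyclic order is therefore uniform among all cyclic orderings of the corresponding block, and by construction this order is precisely the cycle of $\sigma_n$ carried by that block. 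The conditional law of $\sigma_n$ given $\lambda$ is thus obtained by first sampling unordered block sizes from a multinomial and then filling each block by a uniformly random cyclic order, a recipe that is manifestly conjugation-invariant; hence every $\sigma_n$ is central and $\sigma_\infty(\lambda,x)$ induces a central measure on $\mathfrak{S}$.

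It remains to identify this central measure via the Olshanski correspondence recalled above. Writing $N_j^{(n)} := \#\{k \leq n : x_k \in \mathcal{C}_j\}$, the strong law of large numbers gives $N_j^{(n)}/n \to \lambda_j$ almost surely for every fixed $j$; hence, conditionally on $\lambda$, the normalized cycle lengths of $\sigma_n$ sorted in decreasing order converge almost surely in $\nabla$ (with the product topology) to $\lambda$ itself. This identifies the conditional law of $\sigma_\infty(\lambda,x)$ as the central measure on $\mathfrak{S}$ corresponding to $\delta_\lambda$, and integrating over $\lambda \sim p$ yields the central measure corresponding to $p$. The main obstacle is the first, combinatorial step; once the virtual-permutation property is established, the rest reduces to a routine exchangeability and strong law of large numbers argument.
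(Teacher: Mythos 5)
Your argument is correct, and in fact it fills a gap the paper leaves open: the paper states Proposition~\ref{prop:couplage} without proof, attributing the construction to Tsilevich and Najnudel--Nikeghbali, so there is no internal argument to compare against. Your first (combinatorial) step is exactly the right verification of the consistency property, and your second step --- exchangeability of the labelled partition given $\lambda$, uniform cyclic orders within blocks, hence centrality of each $\sigma_n$, then identification of the limit via the strong law of large numbers --- is a clean, essentially standard route. The one point you should make explicit is the final identification: you need more than the bare existence of a bijection between central measures on $\mathfrak{S}$ and probability measures on $\nabla$ (which is all the paper recalls); you need the concrete description of that correspondence, namely that under any central measure the sorted normalized cycle lengths converge almost surely, that the law of this limit is the measure on $\nabla$ attached to the central measure, and that mixtures on $\nabla$ correspond to mixtures of the extreme (``paintbox'') central measures. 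With that form of the Vershik--Kerov/Olshanski theorem (the content of the cited Theorem~2.3), your conclusion that the conditional law given $\lambda$ is the central measure corresponding to $\delta_\lambda$, and hence that integrating over $\lambda\sim p$ gives the central measure corresponding to $p$, is complete. Two harmless details worth a sentence each in a polished write-up: almost surely no two $x_k$ coincide, so the cyclic orders are well defined; and coordinatewise convergence $N_j^{(n)}/n\to\lambda_j$ together with $\sum_j N_j^{(n)}/n=\sum_j\lambda_j=1$ upgrades to $\ell^1$ convergence, so passing to decreasing rearrangements is legitimate.
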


\begin{exemple}
If $\lambda$ follows the $\mathrm{PD}(\theta )$ distribution and if conditionally on $\lambda$ the points $x_k$ are i.i.d random variables uniformly distributed on $E_\lambda $, then $\sigma_\infty (\lambda, x)$ follows $\mathrm{Ewens}(\theta )$.
\end{exemple}
 
Let $p$ be a distribution on $\nabla^\prime$. Let $y=(y_1, y_2, \dots)$ be a random vector following $p$ and let $E_y$ be the disjoint union of circles $\mathcal{C}_j$ of perimeters $y_j$. Assume that conditionally given $y$, the $x_k$ are i.i.d random variables uniformly distributed on $E_y$. Finally, introduce the array of random variables $(\ell_{n,j})_{n,j\geq 1}$ defined by
\begin{equation}\label{eq:ell}
\ell_{n,j} := \# \{ k\in \{1, \dots , n\}: \ x_k \in \mathcal{C}_j  \}, 
\end{equation}
and denote 
\begin{equation}\label{eq:yjn}
y_j^{(n)}:= \frac{\ell_{n,j}}{n}.
\end{equation}
Then, as a consequence of Proposition~\ref{prop:couplage}, almost surely, $(y_1^{(n)}, y_2^{(n)} , \dots )$ converges in distribution to $y$. Moreover, conditionally on $y$, for all $j$,
\begin{equation}
y_j^{(n)} = \frac{1}{n} \sum_{k=1}^n \mathds{1}_{x_k \in \mathcal{C}_j } \overset{a.s.}{\underset{n\to \infty} {\longrightarrow}} y_j 
\end{equation}
by the strong law of large numbers.

In this paper we also consider some modifications of permutation matrices, that we will call \textbf{modified permutation matrices}, which are permutation matrices where the entries equal to one are replaced by complex numbers of modulus one. The set of modified permutation matrices of size $n$ has a group structure and can be identified to the wreath product $S^1 \wr \mathfrak{S}_n$, where $S^1$ denotes the unit circle. Let us denote by $\mathcal{T}_n$ the subset of matrices of $S^1 \wr \mathfrak{S}_n$ which do not have $1$ as an eigenvalue.
The next lemma provides a construction of sequences of elements of $\mathcal{T}_n$, $n\geq 1$, by analogy to the notion of \textbf{virtual isometries} introduced by Bourgade, Najnudel and Nikeghbali in \cite{bourgade2012unitary}.

\begin{lem}\label{lem:proj}
For all $n\geq 1$, for all $M\in \mathcal{T}_{n+1}$, there exists a unique $N\in \mathcal{T}_n$ such that 
\begin{equation}
\mathrm{rank} \left( M -
\left(\begin{array}{@{}c|c@{}}
  \begin{matrix}
   &  & \\
   & N & \\
   & & 
  \end{matrix}
  &   \begin{matrix}
   0 \\
   \vdots \\
   0 
  \end{matrix}
   \\
\hline
  \begin{matrix}
   0 & \cdots & 0
  \end{matrix} & 1
\end{array}\right)
\right) = 1. 
\end{equation}
Moreover, the permutation corresponding to $N$ derives from the one of $M$ by removing the element $n+1$ from its cycle-decomposition.
\end{lem}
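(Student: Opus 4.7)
The plan is to construct $N$ explicitly via the \emph{shortcut formula} $N_{i,j} := M_{i,j} + M_{i,n+1}M_{n+1,j}$ for $1 \leq i,j \leq n$, and then verify successively that this $N$ is a modified permutation matrix whose underlying permutation is the one obtained by removing $n+1$, that it lies in $\mathcal{T}_n$, that the prescribed rank condition holds, and that no other choice works.

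Let $\pi$ denote the permutation of $M$. If $n+1$ is a fixed point of $\pi$, then $M_{i,n+1} = 0$ for $i \leq n$, so the perturbation vanishes identically and $N$ is simply the top-left $n\times n$ block of $M$, whose underlying permutation is the restriction of $\pi$ to $\{1, \ldots, n\}$. Otherwise, write the cycle of $\pi$ containing $n+1$ as $(a_1, a_2, \ldots, a_k, n+1)$ with $\pi(a_k) = n+1$ and $\pi(n+1) = a_1$. Then $M_{i,n+1}M_{n+1,j}$ is nonzero only for $(i,j) = (a_1, a_k)$, where it equals $M_{a_1,n+1}M_{n+1,a_k} \in S^1$, and since $M_{a_1,a_k} = 0$ (because $\pi(a_k) = n+1 \ne a_1$) the formula replaces this zero by an element of $S^1$. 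In every case $N$ is a modified permutation matrix whose underlying permutation $\pi'$ sends $a_k$ to $a_1$ and agrees with $\pi$ elsewhere, i.e.\ the one obtained by erasing $n+1$ from the cycle decomposition.

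To show $N \in \mathcal{T}_n$, I would factor both $\det(I - M)$ and $\det(I - N)$ as products over cycles, each cycle contributing $1$ minus the product of its modifications. The key remark is that the inserted modification $M_{a_1,n+1}M_{n+1,a_k}$ is precisely the concatenation of the two edges through $n+1$, so the cycle product of the shortcut cycle in $\pi'$ equals that of the long cycle in $\pi$. Hence $\det(I - N) = \det(I - M)$ when $n+1$ is not a fixed point, and $\det(I - N) = \det(I - M)/(1 - M_{n+1,n+1})$ otherwise; in both cases the right-hand side is nonzero because $M \in \mathcal{T}_{n+1}$.

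For the rank condition, I would exhibit the explicit decomposition $M - B = uv^T$, where $B$ denotes the displayed block matrix and
\[
u := (M_{1,n+1}, \ldots, M_{n,n+1}, -1)^T, \qquad v := (-M_{n+1,1}, \ldots, -M_{n+1,n}, 1)^T;
\]
the identity is a direct entrywise verification, and since the last coordinates of $u$ and $v$ are nonzero the rank equals exactly one. For uniqueness, any other $N' \in \mathcal{T}_n$ with $M - B'$ of rank one would force every $2\times 2$ minor along the last row and column to vanish, giving $(M_{i,j} - N'_{i,j})(M_{n+1,n+1} - 1) = M_{i,n+1}M_{n+1,j}$ for all $i,j \leq n$; since $M \in \mathcal{T}_{n+1}$ forces $M_{n+1,n+1} \ne 1$ (else $\{n+1\}$ would be a fixed cycle with product $1$), this uniquely determines each $N'_{i,j}$ and recovers $N$. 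The only conceptually nontrivial step is recognizing that the rank-one condition on $M - B$ is precisely an algebraic encoding of cycle shortcutting; once that is in hand, the remaining checks are routine.
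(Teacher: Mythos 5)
Your construction is exactly the paper's: your shortcut formula $N_{i,j}=M_{i,j}+M_{i,n+1}M_{n+1,j}$ is the matrix $M^{[n]}+z_\ell z_{n+1}E_{w_1 w_\ell}$ of the paper's proof, the cycle-product argument for $\det(I-N)\neq 0$ is the paper's eigenvalue argument in entrywise form, and uniqueness via vanishing $2\times 2$ minors is the same rank-forcing step, so the proof is correct and follows essentially the same route. One small slip to fix: the identity $M-B=uv^{T}$ with $u_{n+1}=-1$, $v_{n+1}=1$ holds only when $M_{n+1,n+1}=0$, i.e.\ when $n+1$ is not a fixed point; in the fixed-point case the $(n+1,n+1)$ entry of $M-B$ is $M_{n+1,n+1}-1\neq -1$, so the "direct entrywise verification" fails there --- but in that case $M-B$ has a single nonzero entry $M_{n+1,n+1}-1\neq 0$ (as $1$ is not an eigenvalue of $M$), so the rank-one claim is immediate and the argument goes through after treating that case separately, as you already do elsewhere in your write-up.
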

Before proving this result, let us give an insight with the basic example $M=\begin{pmatrix}
0 & 0 & z_3 \\
z_1 & 0 & 0 \\
0& z_2 & 0
\end{pmatrix} \in \mathcal{T}_3$. \\ 
Let $N \in \mathcal{M}_2 (\C )$. First observe that one can write
\[M-\mathrm{diag} (N,1) =  \left(\begin{array}{@{}c|c@{}}
  \begin{matrix}
   &  & \\
   & Q & \\
   & & 
  \end{matrix}
  &   \begin{matrix}
   z_3 \\
    \\
   0 
  \end{matrix}
   \\
\hline
  \begin{matrix}
   0 & & z_2
  \end{matrix} & -1
\end{array}\right) \] 
where $Q = \begin{pmatrix}
0 & 0  \\
z_1 & 0 
\end{pmatrix} - N$. Then $\mathrm{rank} (M-\mathrm{diag} (N,1)) = 1$ implies that the first column and the second row of 
$M-\mathrm{diag} (N,1)$ are zeros, notably $Q_{1,1} = Q_{2,1} = Q_{2,2} = 0$. Moreover, from this same rank condition we deduce
$\det \begin{pmatrix}
Q_{1,2} & z_3 \\
z_2 & -1
\end{pmatrix}=0$,
\emph{i.e.} $Q_{1,2}=-z_2 z_3$. \\
Conversely, the matrix $N:= \begin{pmatrix}
0 & z_2 z_3 \\
z_1 & 0
\end{pmatrix}$ satisfies $\mathrm{rank} (M-\mathrm{diag} (N,1)) = 1$, and $N$ lies in $\mathcal{T}_2$ since $z_1 z_2 z_3 \neq 1$ by assumption on $M$.

\begin{proof}[Proof of Lemma~\ref{lem:proj}]
Let $n\geq 1$ and $M\in \mathcal{T}_{n+1}$.
Write $(w_1 \ w_2 \ \dots \ w_\ell \ w_{\ell +1}\!\!=\!\!n+1)$ the cycle of the corresponding permutation of $M$ containing the element $n+1$. There exist $z_1, \dots , z_\ell $ and $z_{n+1}$ some complex numbers of modulus one such that for all $k \in \{1, \dots , \ell \}$, $M e_{w_k} = z_k e_{w_{k+1}}$ and $M e_{n+1} = z_{n+1} e_{a_1}$ where $(e_1, \dots , e_{n+1})$ is the canonical basis of $\C^{n+1}$.

Denote by $M^{[n]}$ the top-left minor of size $n$ of $M$. Let $N \in \mathcal{M}_n (\C )$.
\begin{itemize}
	\item If $\ell =0$ (\emph{i.e.} $n+1$ is a fixed point of the associated permutation), then $z_{n+1}$ is an eigenvalue of $M$. By hypothesis, this implies $z_{n+1} \neq 1$. Hence $\mathrm{rank} (M - \mathrm{diag}(N,1))=1$ if and only if $N=M^{[n]}$ (since $z_{n+1}-1$ is the only non-zero entry of the last row and last column of $M$). Moreover in this case, as $M=\mathrm{diag} (N,z_{n+1})$ we have $N \in \mathcal{T}_n$, and the procedure amounts to remove the fixed point $n+1$ from the associated permutation of $M$.
	\item If $\ell \geq 1$, then $M e_{w_\ell} = z_\ell e_{n+1}$ and $M e_{n+1} = z_{n+1} e_{w_1}$ with $w_1 \neq n+1 \neq w_\ell$. The $(\ell + 1)$-th roots of $z_1 \dots z_\ell z_{n+1}$ are eigenvalues of $M$. By hypothesis, it follows $z_1 \dots z_\ell z_{n+1} \neq 1$. Moreover, $\mathrm{rank} (M - \mathrm{diag}(N,1))=1$
if and only if 
$N=M^{[n]} + z_\ell z_{n+1} E_{w_1 w_\ell}$ where $E_{ij}$ is the $n$-by-$n$ matrix with $1$ in row $i$ column $j$, and zeros elsewhere. In this case, $N e_{w_k} = z_k e_{w_{k+1}}$ for all $k\in \{ 1 , \dots \ell -1\}$ (not considered when $\ell =1$) and $N e_{w_\ell} = z_\ell z_{n+1} e_{w_1}$, so that the $\ell$-th roots of $z_1 \dots z_\ell z_{n+1}$ are eigenvalues of $N$ (the corresponding cycle is $(w_1 \ w_2 \ \dots \ w_\ell )$). As $z_1 \dots z_\ell z_{n+1} \neq 1$ we deduce $N \in \mathcal{T}_n$.
\end{itemize} 
\end{proof}

\begin{definition}
We say that a sequence of matrices $(\widetilde{M}_n)_{n\geq 1}$ is a \textbf{modified virtual permutation} if for all $n$, $\widetilde{M}_n \in \mathcal{T}_n$ and $\mathrm{rank} (\widetilde{M}_{n+1} - \mathrm{diag}(\widetilde{M}_n , 1))=1$.
\end{definition}

\begin{remarque}
Note that every modified virtual permutation is in particular a virtual isometry.
\end{remarque}

\begin{prop}\label{prop:couplage2}
Let $(\widetilde{M}_n)_{n\geq 1}$ be a modified virtual permutation. There exists a virtual permutation $(\sigma_n)_{n\geq 1}$ such that, for all $n\geq 1$, $\widetilde{M}_n$ corresponds to the permutation $\sigma_n$ and has a characteristic polynomial of the form
\[\chi_{\widetilde{M}_n} (X) := \det (XI-\widetilde{M}_n) = \prod_{\substack{j\geq 1 \\ \ell_{n,j}>0}} (X^{\ell_{n,j}} -u_j),\]
where $(u_j)_{j\geq 1}$ is a sequence of elements of $S^1\setminus \{1\}$ and the $\ell_{n,j}$ denote the cycle-lengths of $\sigma_n$. Moreover, for all $j$ and $n$ such that $\ell_{n,j}>0$, $u_j$ can be defined as the product of the non-zero entries of $\widetilde{M}_n$ corresponding to the cycle $j$ of $\sigma_n$.
\end{prop}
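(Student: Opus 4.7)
The plan is to recover the permutation part first, then factor the characteristic polynomial cycle by cycle, and finally check that the ``cycle weights'' $u_j$ are stable under the projections $\widetilde{M}_{n+1} \mapsto \widetilde{M}_n$, so that they define a single sequence $(u_j)_{j\geq 1}$ rather than a family depending on $n$.

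First, I would apply Lemma~\ref{lem:proj} to $(\widetilde{M}_n)_{n\geq 1}$: each $\widetilde{M}_n \in \mathcal{T}_n$ is a modified permutation matrix, so it is associated to a permutation $\sigma_n \in \mathfrak{S}_n$, and the rank-one compatibility condition in the definition of a modified virtual permutation is exactly the one in Lemma~\ref{lem:proj}, whose second assertion tells us that $\sigma_n$ is obtained from $\sigma_{n+1}$ by removing the element $n+1$ from the cycle-decomposition. Hence $(\sigma_n)_{n\geq 1}$ is a virtual permutation. I then fix once and for all an enumeration of the cycles of $\sigma_\infty := (\sigma_n)$ (say by smallest element contained), so that the quantities $\ell_{n,j}$ from \eqref{eq:ell} make sense.

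Next, for each $n$, I would reorder the canonical basis so that $\widetilde{M}_n$ becomes block-diagonal with one block $B_{n,j}$ per non-empty cycle $j$ of $\sigma_n$. Each $B_{n,j}$ is a cyclic-shift matrix of size $\ell_{n,j}$ whose non-zero entries are $z_1^{(n,j)}, \ldots, z_{\ell_{n,j}}^{(n,j)} \in S^1$; a direct cofactor expansion (or the companion-matrix formula) gives
\[
\det(XI - B_{n,j}) \;=\; X^{\ell_{n,j}} - z_1^{(n,j)} \cdots z_{\ell_{n,j}}^{(n,j)} \;=:\; X^{\ell_{n,j}} - u_j^{(n)}.
\]
Multiplying over $j$ yields the announced factorization, with $u_j^{(n)}$ a priori depending on $n$.

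The main point, and the step I expect to be the most delicate, is to show that $u_j^{(n)}$ is in fact independent of $n$ (for those $n$ with $\ell_{n,j}>0$). This I would read off directly from the proof of Lemma~\ref{lem:proj}: passing from $\widetilde{M}_n$ to $\widetilde{M}_{n+1}$, either $n+1$ is a new fixed point, in which case all previously existing cycles are untouched and a new weight $u = z_{n+1}$ is created for the new cycle; or $n+1$ is inserted into an existing cycle, turning $(w_1\,\cdots\,w_\ell)$ into $(w_1\,\cdots\,w_\ell\,n{+}1)$, and the explicit rank-one formula in the lemma shows that the entry $z_\ell z_{n+1}$ in $\widetilde{M}_n$ is replaced by the two entries $z_\ell$ and $z_{n+1}$ in $\widetilde{M}_{n+1}$, leaving the product along the cycle unchanged. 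So $u_j^{(n)}$ stabilizes to a common value $u_j \in S^1$, and this common value is, by construction, the product of the non-zero entries of $\widetilde{M}_n$ around cycle $j$, whenever that cycle is visible. Finally, $u_j \neq 1$ because $\widetilde{M}_n \in \mathcal{T}_n$ means $1$ is not a root of $\det(XI-\widetilde{M}_n)$, and $X=1$ plugged into the factor $X^{\ell_{n,j}} - u_j$ gives $1 - u_j$; so $u_j \in S^1 \setminus \{1\}$, completing the proof.
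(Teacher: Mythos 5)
Your proposal is correct and follows essentially the same route as the paper: block-diagonalize along the cycles so each cycle contributes a factor $X^{\ell_{n,j}} - (\text{product of its entries})$, then use the explicit rank-one formula of Lemma~\ref{lem:proj} (the entry $z_\ell z_{n+1}$ of $\widetilde{M}_n$ splitting into $z_\ell$ and $z_{n+1}$ in $\widetilde{M}_{n+1}$) to see that each cycle's weight is unchanged as $n$ grows, which is exactly the paper's recursion \eqref{eq:charpolcustomer} and its Chinese-restaurant conclusion. Your closing remark that $u_j \neq 1$ because $\widetilde{M}_n \in \mathcal{T}_n$ matches the paper's use of the same hypothesis.
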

\begin{proof}
Let $n\geq 1$. As in the proof of the previous lemma, let us denote by $(w_1 \ w_2 \ \dots \ w_\ell \ w_{\ell +1}\!\!=\!\!n+1)$ the cycle of $\sigma_{n+1}$ containing the element $n+1$, and by $z_1, \dots , z_\ell $ and $z_{n+1}$ the complex numbers of modulus one such that for all $k \in \{1, \dots , \ell \}$, $\widetilde{M}_{n+1} e_{w_k} = z_k e_{w_{k+1}}$ and $\widetilde{M}_{n+1} e_{n+1} = z_{n+1} e_{w_1}$ where $(e_1, \dots , e_{n+1})$ is the canonical basis of $\C^{n+1}$. The characteristic polynomials of $\widetilde{M}_{n+1}$ and $\widetilde{M}_n$ satisfy the equality
\begin{equation}\label{eq:charpolcustomer}
\chi_{\widetilde{M}_{n+1}} (X) = \left\{\begin{array}{ll}
  \frac{X^{\ell+1} - z_1 \dots z_\ell z_{n+1}}{X^\ell - z_1 \dots z_\ell z_{n+1}}	\chi_{\widetilde{M}_n} (X) & \text{if } \ell\geq 1 \\
  (X-z_{n+1}) \chi_{\widetilde{M}_n} (X) & \text{if } \ell =0
\end{array}\right. .
\end{equation}
Indeed, if $\ell=0$, then $\widetilde{M}_{n+1}$ can be written $\widetilde{M}_{n+1}=\mathrm{diag}(\widetilde{M}^{[n]}, z_{n+1}) = \mathrm{diag}(\widetilde{M}_n, z_{n+1}) $ by the previous lemma, so that $\chi_{\widetilde{M}_{n+1}} (X) = (X-z_{n+1}) \chi_{\widetilde{M}_n} (X) $. \\
Otherwise, there exists a permutation matrix $P$ of size $n+1$ which fixes the element $n+1$, such that $P\widetilde{M}_{n+1}P^{-1}$ and $P\mathrm{diag} (\widetilde{M}_n,1) P^{-1}$ are block diagonal matrices where:
\begin{itemize}
	\item All the blocks are of the form $(\alpha_1)$ or $\begin{pmatrix}
	0& \ldots & 0 & \alpha_j \\
	\alpha_1 & \ddots & & 0\\
	 & \ddots & \ddots & \vdots \\
	(0) & & \alpha_{j-1} & 0
\end{pmatrix}	$, with $\alpha_1, \dots, \alpha_j \in S^1$.
	\item If $k$ is the number of blocks of $P\widetilde{M}_{n+1}P^{-1}$, then $P\mathrm{diag} (\widetilde{M}_n,1) P^{-1}$ has exactly $k+1$ blocks (including the bottom-right $1$), and the $k-1$ first blocks of $P\widetilde{M}_{n+1}P^{-1}$ and $P\mathrm{diag} (\widetilde{M}_n,1) P^{-1}$ are equal. 
\end{itemize}
The last block of $P\widetilde{M}_{n+1}P^{-1}$ is $\begin{pmatrix}
	0& \ldots & 0 & z_{n+1} \\
	z_1 & \ddots & & 0\\
	 & \ddots & \ddots & \vdots \\
	(0) & & z_{\ell} & 0
\end{pmatrix}	$,
hence with the help of the previous lemma the penultimate block of $P\mathrm{diag} (\widetilde{M}_n,1) P^{-1}$ is
$\begin{pmatrix}
	0& \ldots & 0 & z_\ell z_{n+1} \\
	z_1 & \ddots & & 0\\
	 & \ddots & \ddots & \vdots \\
	(0) & & z_{\ell-1} & 0
\end{pmatrix}$, and we get
\begin{align*}
\chi_{\widetilde{M}_n} (X) &= \chi_{P^{[n]}\widetilde{M}_n (P^{-1})^{[n]}} (X) = \frac{\chi_{P \mathrm{diag}(\widetilde{M}_n,1) P^{-1}} (X) }{X-1} \\
	&= \frac{\chi_{P\widetilde{M}_{n+1}P^{-1}} (X)}{X^{\ell +1} - z_1 \dots z_\ell z_{n+1} } (X^\ell - z_1 \dots z_{\ell -1} (z_\ell z_{n+1})) 
\end{align*}
which gives \eqref{eq:charpolcustomer}. As a consequence, by analogy with the Chinese restaurant process (see \emph{e.g.} \cite{pitman2002combinatorial}), here the customers arrive one by one and choose a table according to its weight, regardless of the past, and when a new customer $n+1$ seats at a table (empty or not), it does not affect the element of $S^1 \setminus \{1\}$ corresponding to this table. Hence we can assign a $u_j$ to each table $j$, independently of $n$.
\end{proof}

\begin{definition}
We call \textbf{random modified permutation matrix} a random matrix $\widetilde{M}_n$ such that:
\begin{itemize}
	\item $\widetilde{M}_n$ corresponds to a random permutation $\sigma_n$ generated with the procedure of Proposition~\ref{prop:couplage} for a given distribution $p$ on $\nabla^\prime$.
	\item The non-zero entries of $\widetilde{M}_n$ are i.i.d random variables uniformly distributed on the unit circle.
\end{itemize}
\end{definition}

\begin{corollaire}\label{cor:couplage}
Let $(\sigma_n)_{n\geq 1}$ be a random virtual permutation, and let $(u_j)_{j\geq 1}$ be a sequence of i.i.d uniform variables on the unit circle, independent of $(\sigma_{n})_{n\geq 1}$. One can couple $((\sigma_n)_{n\geq 1} , (u_j)_{j\geq 1})$ with a random modified virtual permutation $(\widetilde{M}_n)_{n\geq 1}$ such that, for all $n\geq 1$, 
\begin{itemize}
	\item $\widetilde{M}_n$ is a random modified permutation matrix corresponding to $\sigma_{n}$.
	\item Denoting by $\ell_{n,j}$ the cycle-lengths of $\sigma_n$, then for all $j$ and $n$ such that $\ell_{n,j}>0$, $u_j$ is the product of the non-zero entries of $\widetilde{M}_n$ corresponding to the cycle $j$ of $\sigma_n$. 
\end{itemize}
\end{corollaire}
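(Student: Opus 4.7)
The plan is to construct $(\widetilde{M}_n)_{n \geq 1}$ inductively in $n$, using the prescribed sequences $(\sigma_n)$, $(u_j)$ together with an auxiliary sequence $(V_n)_{n \geq 2}$ of i.i.d.\ uniform random variables on $S^1$, chosen independent of everything else. We initialise $\widetilde{M}_1 := (u_1)$. The proof of Lemma~\ref{lem:proj} tells us precisely what freedom is available to extend $\widetilde{M}_n$ into $\widetilde{M}_{n+1} \in \mathcal{T}_{n+1}$ realising the permutation $\sigma_{n+1}$: only the values assigned to the newly created non-zero entries are free, subject to the product along the cycle containing $n+1$ being preserved (or, for a new fixed point, being freely set).

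Concretely, let $j$ denote the index of the cycle of $\sigma_{n+1}$ containing $n+1$. If $\ell_{n,j} = 0$ (so $n+1$ opens a new cycle and is a fixed point of $\sigma_{n+1}$), set $\widetilde{M}_{n+1} := \mathrm{diag}(\widetilde{M}_n, u_j)$. Otherwise $n+1$ is inserted between two elements $w_m$ and $w_{m+1}$ that were consecutive in cycle $j$ of $\sigma_n$; writing $z := (\widetilde{M}_n)_{w_{m+1}, w_m}$, we embed $\widetilde{M}_n$ in the top-left block of $\widetilde{M}_{n+1}$, erase the entry at $(w_{m+1}, w_m)$, and place $(\widetilde{M}_{n+1})_{n+1, w_m} := V_{n+1}$ and $(\widetilde{M}_{n+1})_{w_{m+1}, n+1} := z / V_{n+1}$. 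By construction the rank-$1$ relation of Lemma~\ref{lem:proj} holds, the permutation associated with $\widetilde{M}_{n+1}$ is $\sigma_{n+1}$, and the product along cycle $j$ in $\widetilde{M}_{n+1}$ still equals $u_j$, while the products along the other cycles are inherited unchanged from $\widetilde{M}_n$.

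Two points remain to be checked. First, $\widetilde{M}_n \in \mathcal{T}_n$ almost surely: since $(u_j)$ is i.i.d.\ uniform on $S^1$, we have $u_j \neq 1$ for all $j$ almost surely, and since the eigenvalues of the cycle block for cycle $j$ are the $\ell_{n,j}$-th roots of $u_j$, this excludes $1$ from the spectrum. Second, the non-zero entries of $\widetilde{M}_n$ form an i.i.d.\ uniform family on $S^1$, proved by induction on $n$: in the new-cycle case one appends the fresh uniform variable $u_j$, while in the insertion case the old entry $z$ is replaced by the pair $(V_{n+1}, z/V_{n+1})$. The main, and indeed essentially only, technical obstacle is to check that joint i.i.d.\ uniformity is preserved by this splitting. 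This reduces to the elementary observation that when $Z$ and $V$ are independent and uniform on $S^1$, the map $(Z,V) \mapsto (V, Z/V)$ is a bijection of $(S^1)^2$ preserving the product Haar measure; hence $(V_{n+1}, z/V_{n+1})$ is again an i.i.d.\ uniform pair, jointly independent of all the other entries of $\widetilde{M}_n$. The second bullet of the corollary is then immediate from the construction, since by design $u_j$ is precisely the product of the non-zero entries along cycle $j$ at every step.
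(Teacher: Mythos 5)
Your construction is correct, but it proceeds in the opposite direction from the paper's argument. You build $(\widetilde{M}_n)$ bottom-up: starting from $\widetilde{M}_1=(u_1)$ you insert the element $n+1$ at each step, either appending a fresh diagonal entry $u_j$ for a new cycle or splitting an existing entry $z$ into the pair $(V_{n+1},\,z/V_{n+1})$ with auxiliary uniform variables, and the whole verification rests on the fact that $(Z,V)\mapsto (V,Z/V)$ preserves the product Haar measure on $(S^1)^2$ (plus the routine checks of the rank-one relation and of $\widetilde{M}_n\in\mathcal{T}_n$ via $u_j\neq 1$ a.s., which you do carry out). The paper instead argues top-down: it observes that the projection $M\mapsto N$ of Lemma~\ref{lem:proj} maps the law of an $(n+1)\times(n+1)$ random modified permutation matrix to the law of an $n\times n$ one --- the key computation being that the projected entries are $z'_j=z_{\pi(j)}$ for $j<n$ and $z'_n=z_{\pi(n)}z_{\pi(n+1)}$, and a product of independent uniforms on $S^1$ is again uniform --- so that the sequence of laws $(\mathcal{L}_n)$ is coherent, and then invokes Proposition~\ref{prop:couplage2} to identify the cycle products $u_j$ as well defined independently of $n$. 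The two arguments hinge on dual forms of the same Haar-invariance fact (merging two independent uniforms versus splitting one), but yours buys an explicit measurable construction of $(\widetilde{M}_n)$ as a function of $(\sigma,(u_j),(V_n))$, which makes both bullets of the corollary (in particular the i.i.d.\ uniformity of the entries jointly with independence from $\sigma$, and the identity of the cycle products with the prescribed $u_j$) immediate by induction, at the cost of enlarging the probability space with the auxiliary sequence $(V_n)$ --- which is harmless for a coupling statement; the paper's route is shorter on the page because it leans on Proposition~\ref{prop:couplage2} and on the coherence of the laws, but it leaves the identification of the law of the resulting $(u_j)$ more implicit.
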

\begin{proof}
This immediately derives from Proposition~\ref{prop:couplage2} and the fact that the projection $M \mapsto N$ via $\mathrm{rank} ( M- \mathrm{diag} ( N, 1))$ is coherent with respect to the sequence of probability measures $(\mathcal{L}_n)$ defined for all $n$ as the law of a $n$-by-$n$ random modified permutation matrix. Indeed, if the non-zero entries of $\widetilde{M}_{n+1}$, say $z_1, z_2, \dots , z_{n+1}$, are i.i.d uniform variables on the unit circle, then the non-zero entries of $\widetilde{M}_n$, say $z_1^\prime, z_2^\prime, \dots , z_n^\prime$, satisfy the following rule: 
There exists $\pi \in \mathfrak{S}_{n+1}$ such that for all $j \in \{1, \dots ,n-1\}$, $z^\prime_j = z_{\pi (j)}$, and $z^\prime_n = z_{\pi (n)} z_{\pi (n+1)}$. 
Consequently $z_1^\prime, z_2^\prime, \dots , z_n^\prime$ are i.i.d uniform variables on the unit circle.
\end{proof}

\section{Proof of the main theorem}
\label{sec:quotient}

\subsection{Quotient of characteristic polynomials related to modified permutation matrices}
\label{sec:quotientMPM}
Consider a distribution with exponential decay $p$ on $\nabla^\prime$, giving a $r\in (0,1)$ as in \eqref{eq:expdecayrate}. Let $y=(y_1, y_2 , \dots )$ be a random vector following the distribution $p$.\\
Let $(\widetilde{M}_n)_{n\geq 1}$ be a sequence of modified random permutation matrices generated by the coupling given by Corollary~\ref{cor:couplage}. 

For all $n\in \N^*$ and $z \in \C$, we consider the characteristic polynomial of $\widetilde{M}_n$, defined by \eqref{eq:Ztilde}. As one is almost surely not a zero of $\widetilde{Z}_n$, the function $\widetilde{\xi}_n$ defined by \eqref{eq:xi1}
is an entire function.
 
Moreover, using the notations from \eqref{eq:ell} and \eqref{eq:yjn}, for all $n$ and $j$ such that $\ell_{n,j}>0$, $u_j$ is the product of the non-zero entries of $\widetilde{M}_n$ whom cycle is associated with the circle $\mathcal{C}_j$, so that $u_j$ does not depend on $n$. Hence, by Corollary~\ref{cor:couplage}, $\widetilde{\xi}_n$ can be reformulated with the help of the sequence $(u_j)_{j\geq 1}$ which is independent of the $y_k^{(n)}$, as
\begin{align}
\begin{split}
\widetilde{\xi}_n (z) &= \prod_{\substack{j\geq 1 \\ \ell_{n,j}>0}} \frac{\mathrm{e}^{2i\pi z y_j^{(n)}} - u_j}{1-u_j}  \\
	&= \prod_{\substack{j\geq 1 \\ \ell_{n,j}>0}} \left( 1 + \frac{1}{1-u_j} (\mathrm{e}^{2i\pi z y_j^{(n)}} -1 )    \right).
\end{split}
\end{align}

The next lemmas aim to handle the tail of the infinite product in the expression of $\widetilde{\xi}_n (z)$, in order to apply a dominated convergence theorem and get the pointwise convergence of $\widetilde{\xi}_n$. Moreover, they provide a bound of $\widetilde{\xi}_n$ uniformly on compact sets, allowing to conclude with Montel theorem.
\begin{lem}\label{lem1}
Let $\beta >2$. For all $k \in\N^*$, set $m_k := \min\limits_{1\leq j\leq k} \vert 1 - u_j \vert$. Then a.s. there exists a random number $C_1>0$ such that for all $k$, 
\begin{equation}
m_k > C_1 k^{-\beta}.
\end{equation}
\end{lem}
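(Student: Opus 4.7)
The idea is to reduce the statement about $m_k$ to a statement about the individual variables $|1-u_j|$ and then apply Borel--Cantelli. Since the sequence $(m_k)_{k\geq 1}$ is non-increasing, the assertion ``for all $k$, $m_k > C_1 k^{-\beta}$'' is in fact equivalent to ``for all $j \geq 1$, $|1-u_j| > C_1 j^{-\beta}$'': one direction follows by specializing $k=j$, and for the other direction, given any $j \leq k$ one has $|1-u_j| > C_1 j^{-\beta} \geq C_1 k^{-\beta}$. So it suffices to work at the level of a single index $j$.

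Next, I would estimate the probability that $|1 - u_j|$ is small. Writing $u_j = \mathrm{e}^{2i\pi \theta_j}$ with $\theta_j$ uniform on $[0,1)$, one has $|1 - u_j| = 2|\sin(\pi \theta_j)|$, and a direct computation via Jordan's inequality gives
\[
\PP(|1 - u_j| \leq \varepsilon) \leq \varepsilon, \qquad \varepsilon \in (0,2].
\]
Consequently, the events $B_j := \{|1-u_j| \leq j^{-\beta}\}$ satisfy $\PP(B_j) \leq j^{-\beta}$, and the series $\sum_j j^{-\beta}$ converges since $\beta > 2 > 1$.

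By the Borel--Cantelli lemma, almost surely there exists a random integer $J = J(\omega)$ such that $|1-u_j| > j^{-\beta}$ for every $j \geq J$. For the finitely many remaining indices $j < J$, I would use that each $u_j$ has a continuous distribution on $S^1$, so $|1 - u_j| > 0$ almost surely; therefore $c := \min_{1 \leq j < J} |1-u_j|$ is a.s. strictly positive (with the convention $c = 1$ if $J = 1$). Setting $C_1 := \min(c,1) > 0$, one obtains $|1-u_j| \geq C_1 \geq C_1 j^{-\beta}$ for $j < J$ and $|1-u_j| > j^{-\beta} \geq C_1 j^{-\beta}$ for $j \geq J$, which together with the equivalence from the first paragraph yields the claim.

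I do not anticipate any real obstacle; this is essentially a one-line Borel--Cantelli argument. It is worth noting that $\beta > 1$ already suffices to carry it out, so the stronger hypothesis $\beta > 2$ of the lemma is presumably imposed to accommodate a subsequent estimate involving the product expression for $\widetilde{\xi}_n$, rather than being needed here.
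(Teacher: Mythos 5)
Your proof is correct, and it takes a slightly different route from the paper's. The paper works directly with the running minimum: using independence it bounds $\PP(m_k \leq A) \leq 1-(1-A)^k \leq kA$, applies Borel--Cantelli to the events $\{m_k \leq k^{-\beta}\}$, which requires $\sum_k k^{1-\beta} < \infty$ and hence exactly the hypothesis $\beta>2$, and then fixes the finitely many initial indices by taking $C_1 = \min_{j\leq k_0}(j^{\beta}\vert 1-u_j\vert)\wedge 1$. You instead observe that, by monotonicity of $k\mapsto k^{-\beta}$, the conclusion is equivalent to the statement about the individual variables $\vert 1-u_j\vert$, and apply Borel--Cantelli to the events $\{\vert 1-u_j\vert \leq j^{-\beta}\}$, whose probabilities are bounded by $j^{-\beta}$; this avoids the factor $k$ coming from the union-type bound and only needs $\beta>1$, so your argument is both a bit more economical and slightly stronger. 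One remark on your closing speculation: the hypothesis $\beta>2$ is not imposed for a later estimate but is precisely what the paper's own proof consumes (the summability of $k\cdot k^{-\beta}$); in the applications the lemma is invoked with $\beta=3$, so either formulation suffices there. A purely cosmetic point, shared with the paper's own choice of constant: with $C_1=\min(c,1)$ the inequality at the initial indices may be non-strict in degenerate cases, which is repaired by replacing $C_1$ with $C_1/2$.
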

\begin{proof}
Let $A\in (0,1)$. Let $T$ be a random variable following the uniform distribution on $[0,1]$.  
\[\PP (\vert 1-\mathrm{e}^{2i\pi T} \vert \geq A ) \geq \PP (\sin (\pi T) \geq A ) \geq \PP (2 \min (T, 1-T) \geq A) = \PP (T \geq A) = 1- A.\]
Then for all $k$, 
\[\PP (m_k \leq A) \leq 1 - (1-A)^k \leq k A\]
using the mean value inequality. Thus, 
\[\sum_{k=1}^{+\infty} \PP (m_k \leq k^{-\beta}) \leq \sum_{k=1}^{+\infty} k^{1-\beta} < +\infty. \]
Applying Borel-Cantelli lemma we deduce that the number of $k$ such that $m_k \leq k^{-\beta}$ is a.s. finite, \emph{i.e.} a.s. there exists $k_0 \in \N^*$ such that for all $k> k_0$, $m_k>k^{-\beta}$. Finally,  $C_1:=\min\limits_{j\leq k_0} ( j^\beta \vert 1-u_j \vert )\wedge 1$ gives the claim.
\end{proof}

\begin{lem}\label{lem2}
For all $\rho \in (\sqrt{r} , 1)$, a.s. there exists a random number $C_2>0$, such that for all $j\geq 1$, 
\begin{equation}
s_j:=\sup_{n\geq 1} y_j^{(n)} \leq C_2 \rho^j.
\end{equation}
\end{lem}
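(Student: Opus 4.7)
The plan is to condition on $y$ and exploit the martingale structure underlying $(\ell_{n,j})_{n\geq 1}$, then apply Doob's $L^2$ maximal inequality on dyadic blocks. By the exponential decay assumption, the event $\{y_j \leq Cr^j \text{ for all } j \geq 1\}$ has full probability, so I may work conditionally on $y$. Since conditionally on $y$ the indicators $(\mathds{1}_{x_k \in \mathcal{C}_j})_{k\geq 1}$ are i.i.d.\ Bernoulli$(y_j)$, the process $M_n^{(j)} := \ell_{n,j} - ny_j$ is a martingale with increments bounded by $1$, and one has the clean decomposition $y_j^{(n)} = y_j + M_n^{(j)}/n$. This reduces the problem to controlling $\sup_{n\geq 1} |M_n^{(j)}|/n$ in $L^2$.

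The technical heart of the argument is the bound
\[\E\!\left[\sup_{n\geq 1} \left(\tfrac{M_n^{(j)}}{n}\right)^{\!2} \,\bigg|\, y\right] \leq \kappa\, y_j\]
for some universal $\kappa > 0$. I would obtain it by writing $\sup_{n\geq 1}(M_n^{(j)}/n)^2 \leq \sum_{m\geq 0} \sup_{2^m\leq n<2^{m+1}}(M_n^{(j)}/n)^2$ and noting that, on the $m$-th block, $|M_n^{(j)}|/n \leq (\max_{k\leq 2^{m+1}} |M_k^{(j)}|)/2^m$. Doob's $L^2$ maximal inequality then gives $\E[\max_{k\leq 2^{m+1}}(M_k^{(j)})^2 \mid y] \leq 4\cdot 2^{m+1} y_j(1-y_j)$, so the $m$-th block contributes $O(y_j/2^m)$ and the geometric series over $m$ converges.

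From $y_j^2 \leq y_j$, one then deduces $\E[(\sup_n y_j^{(n)})^2 \mid y] \leq \kappa' y_j$, and Markov's inequality yields
\[\PP\!\left(\sup_n y_j^{(n)} > \rho^j \,\bigg|\, y\right) \leq \frac{\kappa' y_j}{\rho^{2j}} \leq \kappa' C \left(\frac{r}{\rho^2}\right)^{\!j}.\]
The assumption $\rho > \sqrt{r}$ makes $r/\rho^2 < 1$, so this is summable in $j$. The conditional Borel--Cantelli lemma then yields that, almost surely, only finitely many indices $j$ satisfy $s_j > \rho^j$; setting $C_2 := \max\bigl(1,\, \sup_{j\geq 1} s_j/\rho^j\bigr)$, which is a.s.\ finite, completes the proof.

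The main obstacle is that $y_j^{(n)}$ itself is not a martingale, so Doob's inequality cannot be applied to it directly: the $1/n$ normalization must be handled via the dyadic decomposition above. This is precisely what dictates the threshold $\rho > \sqrt{r}$ in the hypothesis, through the factor $y_j/\rho^{2j}$ (rather than $y_j/\rho^j$) produced by the second-moment method.
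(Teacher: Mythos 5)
Your proof is correct, and it takes a genuinely different route from the paper. The paper controls $\PP_y(y_j^{(n)}\geq \rho^j)$ for each $n$ separately, splitting at $n\approx\rho^{-j}$: a union bound over the Bernoulli indicators for $n\leq\rho^{-j}$, and an optimized Chernoff bound for $n\geq\rho^{-j}$, then sums over $n$ to get a bound of order $y_j/\rho^{2j}$ before applying the (conditional) Borel--Cantelli argument. You instead bound the supremum over $n$ in one stroke through the second-moment method: the centred sums $M_n^{(j)}=\ell_{n,j}-ny_j$ form an $L^2$ martingale, and the dyadic blocking combined with Doob's $L^2$ maximal inequality gives $\E\bigl[\sup_n (M_n^{(j)}/n)^2\mid y\bigr]\leq \kappa y_j$ (your block computation, $4\cdot 2^{m+1}y_j(1-y_j)/2^{2m}$ summed over $m$, is right), hence $\E[s_j^2\mid y]\leq \kappa' y_j$ and $\PP_y(s_j>\rho^j)\leq \kappa' C (r/\rho^2)^j$, which is summable precisely when $\rho>\sqrt r$ --- the same threshold, arising for the same reason (the factor $\rho^{-2j}$ from a second-moment/Chebyshev-type bound). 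Your argument is somewhat shorter and avoids both the two-regime case analysis and the auxiliary $r'\in(r,1)$ that the paper introduces; what it gives up is the exponentially small (in $n$) tail that the Chernoff estimate provides, but that extra strength is not needed for the lemma, since only summability in $j$, uniformly in $n$, matters. The conditional Borel--Cantelli step and the construction of the a.s. finite random constant $C_2=\max(1,\sup_j s_j/\rho^j)$ (finite because $s_j\leq 1$ and only finitely many $j$ are exceptional) are sound and mirror the paper's conditioning-then-integrating scheme.
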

\begin{proof}
Let $\rho\in (0,1)$. By the definition of $y_j^{(n)}$ in equation \eqref{eq:yjn}, we see that $y_j^{(n)}$ is the mean of $n$ i.i.d Bernoulli random variables $Z_{1,j} , \dots , Z_{n,j}$ of parameter $y_j$.
Fix $j$. \\
If $n \leq \rho^{-j}$, then it is easy to check that the events $\{ y_j^{(n)} \geq \rho^j \}$ and $\{ \exists k \in \{1, 2, \dots , n\} : \ Z_{k,j} =1\}$ are equal, hence
\[\E \left( \mathds{1}_{y_j^{(n)} \geq \rho^j} \ \vert \  y_j  \right) = \E \left( \mathds{1}_{\exists k \in \{1, 2, \dots , n\} : \ Z_{k,j} =1}  \ \vert \  y_j  \right) \leq n y_j \leq \frac{y_j}{\rho^j},\]
and then 
\[\sum_{n\leq \rho^{-j}} \PP_y ( y_j^{(n)} \geq \rho^j ) \leq \frac{y_j}{\rho^{2j}}.\]
If $n \geq \rho^{-j}$, then for any arbitrary $\lambda >0$ we have the Chernoff bound
\begin{align*}
\E \left( \mathds{1}_{y_j^{(n)} \geq \rho^j} \ \vert \  y_j  \right) &\leq \mathrm{e}^{-\lambda \rho^j} \E \left[ \mathrm{e}^{\frac{\lambda}{n} Z_{1,j} } \ \vert \ y_j  \right]^n \\
	&= \mathrm{e}^{-\lambda \rho^j} \left( 1 - y_j + y_j \mathrm{e}^{\frac{\lambda}{n}} \right)^n \\
	&\leq \mathrm{e}^{-\lambda \rho^j} \exp \left( n y_j \left( \mathrm{e}^{\frac{\lambda}{n}} -1 \right) \right).
\end{align*}
This inequality is optimized at point $\lambda = n \log (\rho^j / y_j)$, which gives
\[\E \left( \mathds{1}_{y_j^{(n)} \geq \rho^j} \ \vert \  y_j  \right) \leq \mathrm{e}^{-n \left(\rho^j \log \left( \frac{\rho^j}{y_j} \right) - \rho^j + y_j \right)} \leq  \mathrm{e}^{-n \rho^j \left( \log \left( \frac{\rho^j}{y_j} \right) - 1 \right) }. \]
By assumption of exponential decay \eqref{eq:expdecayrate}, conditionally on $y=(y_1, y_2, \dots )$, almost surely there exists a constant $C>0$ such that for all $j\geq 1$, $y_j\leq C r^j$. Thus, taking any arbitrary $r^\prime \in (r,1)$, for almost every $y$ there exists an integer $k$ such that for all $j\geq k$, $y_j\leq r^{\prime j}$. Fix a given $y$, $r^\prime$ and $k$.
Then, setting $\rho > \sqrt{r^\prime}$,
\[\log \left( \frac{\rho^j}{y_j} \right) \geq \log \left( \frac{\rho^j}{r^{\prime j}} \right) = j (\log \rho - \log r^\prime ) \geq 2 \]
for all $j$ sufficiently large and greater than $k$, say for all $j\geq m$ ($m$ dependent on $y$). Then for all $j\geq m$, 
\[\sum_{n\geq \rho^{-j}} \PP_y ( y_j^{(n)} \geq \rho^j ) \leq  \frac{\frac{y_j}{\rho^j}\mathrm{e}}{1- \mathrm{e}^{-\rho^j \left( \log \left( \frac{\rho^j}{y_j} \right) - 1 \right)}}  \leq \frac{\frac{y_j}{\rho^{j}}\mathrm{e}}{1-\mathrm{e}^{-\rho^j}} \leq (2\mathrm{e})  \frac{y_j}{\rho^{2j}}.\]
We deduce, for all $j\geq m$, 
\[\sum_{n=1}^{+\infty} \PP_y ( y_j^{(n)} \geq \rho^j ) \leq (1+2\mathrm{e})  \frac{y_j}{\rho^{2j}} \leq (1+2\mathrm{e}) \left(\frac{r^\prime}{\rho^{2}}\right)^j \]
and consequently, we get
\[\E \left[ \sum_{j=m}^{+\infty} \sum_{n=1}^{+\infty} \mathds{1}_{y_j^{(n)} \geq \rho^j} \ \vert \  y \right] < +\infty \]
which implies
\[\PP_y \left(\sum_{j=m}^{+\infty} \sum_{n=1}^{+\infty} \mathds{1}_{y_j^{(n)} \geq \rho^j} < +\infty  \right) =1\]
for almost every $y$. Finally, taking the expectation we get
\[\PP \left(\sum_{j=m}^{+\infty} \sum_{n=1}^{+\infty} \mathds{1}_{y_j^{(n)} \geq \rho^j} < +\infty \right) =1.\] In other words, the number of couples $(j,n)$ such that $j\geq m$ and $y_j^{(n)} \geq \rho^j$ is almost surely finite, which gives that the number of $j$ such that $\sup\limits_{n\in \N^*} y_j^{(n)} \geq \rho^j$ is almost surely finite.
\end{proof}

\begin{lem}
With the same notation as above, a.s.,
\begin{equation}\label{eq:yjuj}
C_3:=\sum_{j=1}^{+\infty} \frac{y_j}{\vert 1-u_j \vert} < +\infty
\end{equation}
and 
\begin{equation}
C_4:=\sum_{j=1}^{+\infty} \frac{s_j}{\vert 1-u_j \vert} < +\infty .
\end{equation}
\end{lem}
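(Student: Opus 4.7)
The plan is to combine the three previous ingredients directly: the polynomial lower bound on $|1-u_j|$ from Lemma~\ref{lem1}, the geometric upper bound on $s_j$ from Lemma~\ref{lem2}, and the exponential decay assumption on the $y_j$'s. Since $y_j \leq s_j$ by definition (as $y_j = \lim_n y_j^{(n)}$, or more simply since $y_j$ itself is dominated by the exponential decay hypothesis), the second sum dominates the first, so it suffices to focus on $C_4$.

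First I would fix, on an almost sure event, a choice of the various random constants: pick $\beta > 2$ and apply Lemma~\ref{lem1} to get a random $C_1 > 0$ with $|1-u_j| \geq C_1 j^{-\beta}$ for every $j \geq 1$; pick any $\rho \in (\sqrt{r},1)$ and apply Lemma~\ref{lem2} to get a random $C_2 > 0$ with $s_j \leq C_2 \rho^j$ for every $j \geq 1$. These two events have probability one and their intersection still has probability one, since $(u_j)_j$ and $y$ are independent of each other (and $\rho, \beta$ are deterministic).

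On this event we then have, term by term,
\[
\frac{s_j}{|1-u_j|} \;\leq\; \frac{C_2}{C_1}\, j^{\beta}\, \rho^{j},
\]
and the right-hand side is summable since $\rho < 1$. This gives $C_4 < +\infty$ a.s.; and since the exponential decay assumption \eqref{eq:expdecayrate} yields $y_j \leq C r^j \leq C r^j$ almost surely (with $r < \rho^2 < \rho$), the same bound $\frac{y_j}{|1-u_j|} \leq \frac{C}{C_1}\, j^\beta r^j$ shows $C_3 < +\infty$ a.s.

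There is no real obstacle here: the lemma is just the bookkeeping step that packages the inputs of Lemmas~\ref{lem1} and~\ref{lem2} (together with the definition of exponential decay) into the two summability statements that will be used for dominated convergence and for the uniform-on-compacts bound in the sequel. The only mild care needed is to verify that the almost sure events of the two preparatory lemmas can be intersected, which is immediate from the independence between $\sigma$ (hence $y$ and the $s_j$) and the sequence $(u_j)_{j\geq 1}$ assumed in Corollary~\ref{cor:couplage}.
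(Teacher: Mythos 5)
Your proof is correct and follows the same route as the paper, which simply declares the lemma a straightforward consequence of Lemma~\ref{lem1} and Lemma~\ref{lem2}; your term-by-term bound $\frac{s_j}{|1-u_j|}\leq \frac{C_2}{C_1} j^{\beta}\rho^{j}$ is exactly the intended bookkeeping (and note that intersecting the two almost-sure events needs no independence argument at all).
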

\begin{proof}
Straightforward consequence of Lemma~\ref{lem1} and Lemma~\ref{lem2}.
\end{proof}

Now, we are able to prove the first point of Theorem~\ref{thm:convergence}:
\begin{proof}[Proof of Theorem~\ref{thm:convergence} $(i)$]
We begin to show the pointwise convergence. Let $z\in \C$. The idea of the proof consists in splitting the product in the expression of $\widetilde{\xi}_n (z)$ as follows:
\[\widetilde{\xi}_n (z) = \prod_{j=1}^{j_0} \frac{\mathrm{e}^{2i\pi z y_j^{(n)}} - u_j}{1-u_j} \prod_{j=j_0+1}^{+\infty} \frac{\mathrm{e}^{2i\pi z y_j^{(n)}} - u_j}{1-u_j} \]
where $j_0$ is an integer depending on $\vert z \vert$ and on random numbers $C_1$ and $C_2$, chosen in such a way that for all $j>j_0$, 
\[\left\vert  \frac{1}{1-u_j} (\mathrm{e}^{2i\pi z y_j^{(n)}} -1   ) \right\vert  \leq \frac{9}{10} < 1. \] 
Indeed we can chose such a $j_0$ as for all $j$, using Lemma~\ref{lem1} with $\beta=3$ and Lemma~\ref{lem2} we have
\begin{align*}
\left\vert  \frac{1}{1-u_j} (\mathrm{e}^{2i\pi z y_j^{(n)}} -1   ) \right\vert  &\leq \frac{1}{C_1 j^{-3}} 2\pi \vert z \vert y_j^{(n)} \exp (2\pi \vert z \vert y_j^{(n)} ) \\
	&\leq \frac{1}{C_1 j^{-3}} 2\pi \vert z \vert C_2 \rho^j \exp (2\pi \vert z \vert C_2 \rho^j )
\end{align*}
and then it suffices to take $j$ large enough such that $\max ( \frac{1}{C_1 j^{-3}} 2\pi \vert z \vert C_2 \rho^j , 2\pi \vert z \vert C_2 \rho^j ) \leq \frac{1}{2}$, which provides a bound lower than $\frac{1}{2} \mathrm{e}^{1/2} \approx 0.82 \leq \frac{9}{10}$. 
Thus, we can apply the logarithm to the product of terms for $j>j_0$ in the expression of $\widetilde{\xi}_n$, and furthermore it is straightforward to check that for all $w\in \C$ such that $\vert w \vert \leq \frac{9}{10}$ we have 
\[\vert \log (1+w) \vert = \left\vert \sum_{k=1}^{+\infty} \frac{(-1)^{k+1}}{k} w^k  \right\vert \leq \sum_{k=1}^{+\infty} \frac{\vert w \vert^k}{k} \leq \frac{\vert w \vert}{1- \vert w \vert } \leq 10 \vert w \vert.  \] 
Consequently for all $j>j_0$,
\begin{align*}
\left\vert \log \left( 1 + \frac{1}{1-u_j} (\mathrm{e}^{2i\pi z y_j^{(n)}} -1   ) \right) \right\vert &\leq 10 \left\vert  \frac{1}{1-u_j} (\mathrm{e}^{2i\pi z y_j^{(n)}} -1   ) \right\vert  \\
	&\leq \frac{10}{\vert 1- u_j \vert} \sum_{k=1}^{+\infty}  \frac{\vert 2i\pi z y_j^{(n)}  \vert^k}{k!} \\
	&\leq \frac{10}{\vert 1- u_j \vert} y_j^{(n)} (\mathrm{e}^{2\pi \vert z\vert } -1) \\
	&\leq \frac{10 \mathrm{e}^{2\pi \vert z\vert } C_2}{C_1} \rho^j j^3
\end{align*}
which is summable in $j$. Moreover, as $y_j^{(n)}$ converges a.s. to $y_j$, then by continuity $\log \left( 1 + \frac{1}{1-u_j} (\mathrm{e}^{2i\pi z y_j^{(n)}} -1   ) \right)$ converges to $\log \left( 1 + \frac{1}{1-u_j} (\mathrm{e}^{2i\pi z y_j} -1   ) \right)$. Hence by dominated convergence,
\[\widetilde{\xi}_n (z) \underset{n\to\infty}{\longrightarrow} \widetilde{\xi}_\infty (z).\]
This holds true for every $z\in \C$ so we get the pointwise convergence of $\widetilde{\xi}_n$ to $\widetilde{\xi}_\infty$ on $\C$.   \\
Now, consider an arbitrary compact set of $\C$ included in $\{ z \in \C : \vert z \vert \leq K \}$ for a certain fixed $K>0$. For all $z$ in this compact set and all $n\geq 1$,
\begin{align*}
\vert \widetilde{\xi}_n (z) \vert &\leq \prod_{j=1}^{+\infty} \left( 1 + \frac{1}{\vert 1-u_j \vert} \vert \mathrm{e}^{2i\pi z y_j^{(n)}} -1 \vert \right) \\
	&\leq \prod_{j=1}^{+\infty} \left( 1 + \frac{y_j^{(n)}}{\vert 1-u_j \vert}  (\mathrm{e}^{2 \pi K} -1)  \right) \\
	&\leq \exp \left( \sum_{j=1}^{+\infty}  \frac{y_j^{(n)}}{\vert 1-u_j \vert}  (\mathrm{e}^{2 \pi K} -1)   \right) \\
	&\leq \exp \left( (\mathrm{e}^{2 \pi K} -1)  \sum_{j=1}^{+\infty}  \frac{s_j}{\vert 1-u_j \vert}  \right) \\
	&= \exp \left( C_4 (\mathrm{e}^{2 \pi K} -1) \right).
\end{align*}
We deduce by Montel theorem the uniform convergence of $\widetilde{\xi}_n$ to $\widetilde{\xi}_\infty$ on all compact sets. \\
Finally, the functions $g_j (z):= \frac{1}{1-u_j} \left( \mathrm{e}^{2i\pi z y_j } -1  \right)$ are holomorphic on $\C$, and for all $z$ in any arbitrary compact subset of $\C$, $\vert g_j (z)\vert $ is uniformly bounded by a constant depending on the compact set times $\frac{y_j}{\vert 1-u_j \vert }$, which is summable in $j$ (see \eqref{eq:yjuj}). Hence $\sum g_j$ is normally convergent on compact sets, and it follows that the infinite product $\prod (1 + g_j)$ is uniformly convergent on compact sets. One deduces that $\widetilde{\xi}_\infty$ is an entire function.
\end{proof}

\subsection{Quotient of characteristic polynomials related to permutation matrices (without modification)}
\label{sec:quotientPM}

Here, the problem of finding a suitable normalization of the characteristic polynomial in order to have a non-trivial limiting function is more difficult than the previous one. We precise below the nature of this problem and defend our choice of normalization. 

Consider a distribution with exponential decay $p$ on $\nabla^\prime$, giving a $r\in (0,1)$ as in \eqref{eq:expdecayrate}, and let $y=(y_1, y_2 , \dots )$ be a random vector following the distribution $p$.\\
Let $(M_n)_{n\geq 1}$ be a sequence of random permutation matrices generated by the coupling described in Section~\ref{sec:gen} with respect to $y$. For all $n\in \N^*$ and $z \in \C$, we consider the characteristic polynomial of $M_n$ defined by \eqref{eq:Z}. 

Contrarily to random permutation matrices with modification (the $\widetilde{M}_n$ defined in the previous subsection), for every $n$, there are some points $z$ on the unit circle such that $\PP (Z_n (z)=0)>0$. For instance, for all $n$, the characteristic polynomial of $M_n$ evaluated at $z=1$ is almost surely zero (since each $j$-cycle of the associated permutation corresponds to eigenvalues which are exactly the $j-$th roots of unity). Thus, the function $\widetilde{\xi}_n$ of the previous section, replacing $\widetilde{Z}_n$ by $Z_n$, \emph{i.e.}
\[\widetilde{\xi}_n (z) = \frac{Z_n (\mathrm{e}^{2i \pi z/n})}{Z_n (1)}\]
is not well-defined on the whole complex plane here. Based on the fact that all eigenvalues of permutation matrices are roots of unity, then, for every irrational number $\alpha$, $z=\mathrm{e}^{2i\pi\alpha}$ is almost surely not a zero of $Z_n$ for all $n$. Let $\alpha$ be an irrational number between $0$ and $1$. It is natural to shift the random process of eigenangles by $2\pi \alpha$, and consider the function $\xi_{n,\alpha}$ defined by \eqref{eq:xialpha1}.

As $\alpha$ is irrational, then $\xi_{n,\alpha}$ is an entire function, and can be written as follows:
\begin{align}
\begin{split}
\xi_{n,\alpha } &= \prod_{\substack{j\geq 1 \\ \ell_{n,j}>0}} \frac{\mathrm{e}^{2i\pi \left(\frac{z}{n} + \alpha \right)\ell_{n,j}} -1}{\mathrm{e}^{2i\pi \alpha \ell_{n,j}} -1} \\
 &=\prod_{\substack{j\geq 1 \\ \ell_{n,j}>0}}  \left( 1 + \frac{\mathrm{e}^{2i\pi \alpha \ell_{n,j}}}{\mathrm{e}^{2i\pi \alpha \ell_{n,j}} -1} \left( \mathrm{e}^{2i\pi z y_j^{(n)}} -1 \right)   \right).
\end{split}
\end{align}
Heuristically, the idea of considering $\alpha$ of finite type in Theorem~\ref{thm:convergence} is to get the denominators in the expression of $\xi_{n,\alpha }$ not too close from $0$ when $n$ become large, by comparison to the factor $\mathrm{e}^{2i\pi z y_j^{(n)}} -1$ in the numerator. In the sequel we prove the result as follows: first, we show the convergence in law for finite products in the topology of pointwise convergence, and then, we handle the remaining infinite product in order to get the convergence of $\xi_{n,\alpha}$ to $\widetilde{\xi}_\infty$ in this topology. Finally we use a criterion to extend the result to the topology of uniform convergence on compact sets.

\begin{lem}
For all $k\in \N^*$, conditionally on $y$,
\[(\ell_{n,1} , \ell_{n,2} , \dots , \ell_{n,k} , n- \ell_{n,1} - \ell_{n,2}- \dots - \ell_{n,k}) \overset{\text{d}}{=} \mathcal{M} (n, y_1 , y_2 , \dots ,y_k, 1 - y_1 - y_2 - \dots - y_k),\]
where $\mathcal{M} (n,q_1,q_2,\dots , q_m)$ denotes a multinomial random variable of parameters $q_1,q_2,\dots ,q_m$.
\end{lem}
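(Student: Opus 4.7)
The plan is to read the statement directly off the construction of the $x_k$ given in Section~\ref{sec:gen}. Conditionally on $y$, the $x_k$ are i.i.d.\ uniform on $E_y = \bigsqcup_{j\geq 1} \mathcal{C}_j$, where $\mathcal{C}_j$ has perimeter $y_j$. Since $y \in \nabla^\prime$ we have $\sum_{j\geq 1} y_j = 1$, so the total perimeter of $E_y$ is $1$ and hence, for each fixed $k$,
\[\PP_y (x_k \in \mathcal{C}_j) = y_j , \qquad \PP_y \!\left( x_k \in E_y \setminus \bigsqcup_{j=1}^{k} \mathcal{C}_j \right) = 1 - \sum_{j=1}^{k} y_j .\]

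Next I would write $\ell_{n,j} = \sum_{m=1}^n \mathds{1}_{x_m \in \mathcal{C}_j}$ and, for a fixed index $m\in\{1,\dots,n\}$, consider the random vector
\[V_m := \bigl( \mathds{1}_{x_m \in \mathcal{C}_1}, \dots , \mathds{1}_{x_m \in \mathcal{C}_k} , \mathds{1}_{x_m \notin \mathcal{C}_1 \cup \dots \cup \mathcal{C}_k} \bigr).\]
Conditionally on $y$, $V_m$ takes exactly one of the $k+1$ canonical basis vectors with probabilities $(y_1, \dots , y_k, 1-y_1-\dots -y_k)$, i.e.\ $V_m$ is a $\mathcal{M}(1, y_1, \dots , y_k, 1-y_1-\dots -y_k)$ random vector.

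The conclusion then comes from the fact that, conditionally on $y$, the $x_m$'s are independent, so the $V_m$'s are i.i.d., and
\[(\ell_{n,1}, \dots , \ell_{n,k}, n-\ell_{n,1}-\dots -\ell_{n,k}) = \sum_{m=1}^n V_m\]
is the sum of $n$ i.i.d.\ multinomial $\mathcal{M}(1,\dots)$ vectors, hence multinomial $\mathcal{M}(n, y_1, \dots , y_k, 1-y_1-\dots -y_k)$, as claimed. There is no genuine obstacle here: once the construction of the $x_k$ is recalled, the statement is the textbook definition of a multinomial distribution, and the only point worth flagging is the use of $\sum_j y_j = 1$ (which holds because we are working on $\nabla^\prime$) to identify the uniform probability of $\mathcal{C}_j$ with $y_j$.
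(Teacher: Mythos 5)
Your proof is correct and is exactly the argument the paper has in mind: the paper simply states the lemma is a direct consequence of the definition \eqref{eq:ell}, and your write-up spells out that observation (conditionally on $y$, the indicator vectors of which circle each $x_m$ falls in are i.i.d.\ $\mathcal{M}(1, y_1, \dots , y_k, 1-\sum_{j\leq k} y_j)$ and sum to the claimed vector). Nothing is missing, and the point you flag about $\sum_j y_j = 1$ on $\nabla^\prime$ is the right one.
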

\begin{proof}
Direct consequence of \eqref{eq:ell}.
\end{proof}

\begin{lem}
For all $k\in \N^*$, conditionally on $y$,
\[\left(\{\alpha \ell_{n,1} \}, \{ \alpha \ell_{n,2} \} , \dots , \{ \alpha \ell_{n,k} \}  , y_1^{(n)} , \dots ,  y_k^{(n)}  \right) \overset{d}{\underset{n\to \infty}{\longrightarrow}} (\Phi_1, \Phi_2,\dots , \Phi_k, y_1, y_2 , \dots , y_k) \]
where the $\Phi_j$ are i.i.d random variables uniformly distributed on $[0,1]$, independent of the $y_j$. 
\end{lem}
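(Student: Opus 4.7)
The plan is to reduce the joint convergence to a Weyl-equidistribution problem on the $k$-torus, then compute the relevant conditional characteristic function via the multinomial structure provided by the preceding lemma.

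First, since conditionally on $y$ the strong law of large numbers gives $(y_j^{(n)})_{j\leq k} \to (y_j)_{j\leq k}$ almost surely to a deterministic limit, Slutsky's theorem reduces the claim to the conditional convergence
\[
\bigl(\{\alpha \ell_{n,1}\}, \dots, \{\alpha \ell_{n,k}\}\bigr) \overset{d}{\underset{n\to\infty}{\longrightarrow}} (\Phi_1, \dots, \Phi_k)
\]
in $[0,1]^k$. By Weyl's equidistribution criterion, this in turn reduces to showing that for every $m = (m_1, \dots, m_k) \in \Z^k \setminus \{0\}$,
\[
\E\!\left[ \exp\!\left(2i\pi \alpha \sum_{j=1}^k m_j \ell_{n,j}\right) \,\middle|\, y \right] \longrightarrow 0 \quad \text{as } n \to \infty.
\]

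To compute this conditional expectation, I would use the representation $\ell_{n,j} = \sum_{r=1}^n \mathds{1}_{x_r \in \mathcal{C}_j}$ and introduce $W_r := \sum_{j=1}^k m_j \mathds{1}_{x_r \in \mathcal{C}_j}$. Conditionally on $y$, the variables $(W_r)_{r\geq 1}$ are i.i.d., each equal to $m_j$ with probability $y_j$ (for $j=1,\dots,k$) and equal to $0$ with probability $1-\sum_{j=1}^k y_j$. Consequently
\[
\E\!\left[ \exp\!\left(2i\pi \alpha \sum_{j=1}^k m_j \ell_{n,j}\right) \,\middle|\, y \right] = \phi_y(m)^n, \qquad \phi_y(m) := 1 + \sum_{j=1}^k y_j \bigl(e^{2i\pi m_j \alpha}-1\bigr).
\]

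Finally, $\phi_y(m)$ is a convex combination of the unit-circle points $\{1, e^{2i\pi m_1 \alpha}, \dots, e^{2i\pi m_k \alpha}\}$ with weights $(1-\sum_{j\leq k} y_j, y_1,\dots, y_k)$, so $|\phi_y(m)|\leq 1$, with strict inequality whenever two of these points carrying positive weight differ. Picking $j_0$ with $m_{j_0}\neq 0$, the irrationality of $\alpha$ forces $e^{2i\pi m_{j_0}\alpha}\neq 1$, so strictness holds as soon as $y_{j_0}>0$ and $\sum_{j=1}^k y_j<1$, a non-degeneracy property satisfied $p$-almost surely for the central measures of interest (e.g.\ Poisson--Dirichlet). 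One then gets $|\phi_y(m)^n|\to 0$ geometrically. The only delicate point is this $y$-non-degeneracy; note that the finite-type hypothesis on $\alpha$ plays no role here and enters only later, to control the tail of the infinite product defining $\xi_{n,\alpha}$.
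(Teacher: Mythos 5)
Your proof is correct and rests on the same core computation as the paper: conditionally on $y$, the characteristic function of the relevant linear combination of the $\ell_{n,j}$ factors through the multinomial structure into an $n$-th power of a convex combination of unit-circle points, whose modulus is strictly less than $1$ because $\alpha$ is irrational. The one structural difference is that the paper proves the joint convergence directly by taking the $2k$-dimensional Fourier transform with $n$-dependent frequencies $j_m\alpha+\frac{\lambda_m}{2\pi n}$, and must then argue that the relevant point stays uniformly away from $1$ for large $n$; your reduction via the strong law and Slutsky (the $y_j^{(n)}$ converge, given $y$, to constants) decouples the two blocks, so Weyl's criterion leaves you with the exact, $n$-independent quantity $\phi_y(m)^n$ and the estimate becomes cleaner. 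Both arguments share the same implicit non-degeneracy requirement ($y_{j}>0$ and $\sum_{j\leq k}y_j<1$ a.s., the paper's ``a.s.\ positive coefficients''), which you at least state explicitly; and, as you note, the finite-type hypothesis on $\alpha$ indeed plays no role at this stage in either proof.
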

\begin{proof}
Let $j_1, \dots , j_k$ be integers, and let $\lambda_1, \dots , \lambda_k$ be real numbers. Conditionally on $y$, the Fourier transform of the distribution of $\left(\{\alpha \ell_{n,1} \}, \{ \alpha \ell_{n,2} \} , \dots , \{ \alpha \ell_{n,k} \}  , y_1^{(n)} , \dots ,  y_k^{(n)}  \right)$ reads
\begin{align*}
&\E \left. \left( \mathrm{e}^{2i\pi (j_1 \{\alpha \ell_{n,1} \} + \dots + j_k \{ \alpha \ell_{n,k} \}) + i \left( \lambda_1 \frac{\ell_{n,1}}{n} + \dots + \lambda_k \frac{\ell_{n,k}}{n} \right)} \right\vert y \right) \\
	&\qquad = \E \left. \left( \mathrm{e}^{2i\pi \left( j_1 \alpha + \frac{\lambda_1}{2\pi n} \right)  \ell_{n,1} + \dots + 2i \pi \left( j_k \alpha + \frac{\lambda_k}{2\pi n} \right) \ell_{n,k}  + 0\times ( n-\ell_{n,1}-\dots -\ell_{n,k} ))} \right\vert y \right) \\
	&\qquad = \sum_{\substack{(\ell_1 , \dots , \ell_{k+1}) \in \N^{k+1} \\ \ell_1 + \dots + \ell_{k+1} = n }} \frac{n!}{\ell_1 ! \dots \ell_{k+1} !} \left( y_1 \mathrm{e}^{2i\pi \left( j_1 \alpha + \frac{\lambda_1}{2\pi n} \right)}\right)^{\ell_1} \dots \left( y_k \mathrm{e}^{2i\pi \left( j_k \alpha + \frac{\lambda_k}{2\pi n} \right)}\right)^{\ell_k} (1-y_1- \dots - y_k)^{\ell_{k+1}} \\
	&\qquad = \left(  y_1 \mathrm{e}^{2i\pi \left( j_1 \alpha + \frac{\lambda_1}{2\pi n} \right)} + \dots +  y_k \mathrm{e}^{2i\pi \left( j_k \alpha + \frac{\lambda_k}{2\pi n} \right)} + (1-y_1 - \dots - y_k) \right)^n. 
\end{align*}
If $j_1 = j_2 = \dots = j_k = 0$, then this quantity converges to $\mathrm{e}^{i (\lambda_1 y_1 + \dots \lambda_k y_k)}$. \\
Otherwise, there exists $m \in [\![ 1, k ]\!]$ such that $j_m \neq 0$. Since $\alpha$ is irrational, $\Vert j_m \alpha \Vert \neq 0$. Hence there exists $N \in \N$ such that for all $n\geq N$, $\left\vert \frac{\lambda_m}{2\pi n} \right\vert \leq \frac{\Vert j_m \alpha \Vert}{2} < \frac{1}{2}$. Then for all $n\geq N$, \[\left\Vert j_m \alpha + \frac{\lambda_m}{2\pi n} \right\Vert \geq \Vert j_m \alpha \Vert - \left\vert \frac{\lambda_m}{2\pi n} \right\vert \geq  \frac{\Vert j_m \alpha \Vert}{2}.\]
Consequently, $y_1 \mathrm{e}^{2i\pi \left( j_1 \alpha + \frac{\lambda_1}{2\pi n} \right)} + \dots  y_k \mathrm{e}^{2i\pi \left( j_k \alpha + \frac{\lambda_k}{2\pi n} \right)} + (1-y_1 - \dots - y_k)$ is a convex combination with a.s. positive coefficients of points located on the unit circle, whose the distance between two points (namely the point $1$ and the point $\mathrm{e}^{2i\pi \left( j_m \alpha + \frac{\lambda_m}{2\pi n} \right)}$) is bounded from below by a positive number which is independent of $n$. Thus $y_1 \mathrm{e}^{2i\pi \left( j_1 \alpha + \frac{\lambda_1}{2\pi n} \right)} + \dots  y_k \mathrm{e}^{2i\pi \left( j_k \alpha + \frac{\lambda_k}{2\pi n} \right)} + (1-y_1 - \dots - y_k)$ is bounded, uniformly in $n$, by a quantity strictly smaller than one, and finally the Fourier transform goes to $0$, which gives the claim. Note that it is enough to consider only integer values for the $j_m$ because the fractional part function takes values in $[0,1)$, which can be identified to the quotient $\R/\Z$. As the representations of $\R/\Z$ are the functions $x\mapsto \mathrm{e}^{2i\pi j x}$ for $j\in \Z$, the law of every random variable on $[0,1)$ is fully determined by its Fourier transform at integers.
\end{proof}

\begin{prop}\label{prop:convpart1}
For all $k\in \N^*$, conditionally on $y$,
\[  \prod_{\substack{1\leq j\leq k \\ \ell_{n,j}>0}} \left( 1  + \frac{\mathrm{e}^{2i\pi \alpha \ell_{n,j}}}{\mathrm{e}^{2i\pi \alpha \ell_{n,j}} -1 } \left(  \mathrm{e}^{2i\pi z y_j^{(n)}} -1  \right) \right)  \underset{n\to \infty}{\Longrightarrow}  \prod_{\substack{1\leq j\leq k }}  \left( 1  + \frac{\mathrm{e}^{2i\pi \Phi_j}}{\mathrm{e}^{2i\pi \Phi_j} -1 } \left(  \mathrm{e}^{2i\pi z y_j} -1  \right) \right).\]
\end{prop}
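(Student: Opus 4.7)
The plan is to deduce the claim from the previous lemma's joint distributional convergence by the continuous mapping theorem. I would first rewrite each factor using $\mathrm{e}^{2i\pi \alpha \ell_{n,j}} = \mathrm{e}^{2i\pi \{\alpha \ell_{n,j}\}}$, so that both sides depend on $n$ only through the finite random vector
\[V_n := \bigl(\{\alpha \ell_{n,1}\},\dots,\{\alpha\ell_{n,k}\},\, y_1^{(n)},\dots,y_k^{(n)}\bigr).\]
The previous lemma states that, conditionally on $y$, $V_n$ converges in law to $V_\infty := (\Phi_1,\dots,\Phi_k,y_1,\dots,y_k)$, where the $\Phi_j$ are i.i.d.\ uniform on $[0,1]$ and independent of $y$.

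Next I would introduce $S := \{\, j \leq k : y_j > 0 \,\}$ and, for each fixed $z \in \mathbb{C}$, the function
\[H_z(\phi,u) := \prod_{j \in S}\left(1 + \frac{\mathrm{e}^{2i\pi \phi_j}}{\mathrm{e}^{2i\pi \phi_j}-1}\bigl(\mathrm{e}^{2i\pi z u_j}-1\bigr)\right),\]
which is continuous at every point with $\phi_j \neq 0$ for all $j \in S$. Splitting at $S$ isolates the genuine contributions: when $j \in S^c$, $y_j = 0$ forces $\ell_{n,j} = 0$ for every $n$ almost surely, so the left-hand factor is absent from the product, and the right-hand factor equals $1$ since $\mathrm{e}^{2i\pi z y_j}-1 = 0$. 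When $j \in S$, irrationality of $\alpha$ gives $\{\alpha\ell_{n,j}\} = 0$ if and only if $\ell_{n,j} = 0$, an event of conditional probability $(1-y_j)^n \to 0$. Hence on an event $E_n$ of conditional probability approaching $1$, the left-hand side equals precisely $H_z(V_n)$.

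Since $\Phi_j \neq 0$ almost surely, $V_\infty$ a.s.\ lies in the continuity locus of $H_z$. Applying the continuous mapping theorem jointly at any finite family $z_1,\dots,z_m \in \mathbb{C}$ through the map $V \mapsto (H_{z_1}(V),\dots,H_{z_m}(V))$ yields the convergence in distribution of all finite-dimensional marginals, conditionally on $y$, to those of the right-hand side of the proposition; this is exactly the desired convergence in law in the topology of pointwise convergence on $C(\mathbb{C},\mathbb{C})$ (the contribution of $E_n^c$, which tends to zero, is absorbed by a Slutsky-type argument). The main thing requiring care is the bookkeeping around $S^c$ and the verification that the denominator singularities of $H_z$ are almost surely avoided in the limit; both are easily dispatched by the irrationality of $\alpha$ together with the fact that the limit $\Phi_j$ has no atom at $0$.
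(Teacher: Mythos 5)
Your proposal is essentially the paper's argument: both rest on the previous lemma's joint convergence of $\bigl(\{\alpha\ell_{n,1}\},\dots,\{\alpha\ell_{n,k}\},y_1^{(n)},\dots,y_k^{(n)}\bigr)$ together with the continuous mapping theorem, using that the limit a.s.\ avoids the singular set $\{\exists j:\ \Phi_j\in\Z\}$; your extra bookkeeping around $y_j=0$ and $\ell_{n,j}=0$ (the set $S$, the event $E_n$, Slutsky) is sound, and in fact can be avoided altogether by extending the map so that a factor with $x_j\in\Z$ is set to $1$, since $\alpha$ irrational guarantees $\{\alpha\ell_{n,j}\}\in\Z$ only when $\ell_{n,j}=0$, in which case $y_j^{(n)}=0$ as well. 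The one point where you fall short of the statement as written is the topology: the arrow $\Longrightarrow$ is defined in the paper as convergence in law in $\mathcal{C}(\C,\C)$ with the topology of uniform convergence on compact sets, whereas your finite-dimensional (pointwise) convergence does not by itself give this. The paper gets the stronger statement at no extra cost by applying the continuous mapping theorem to the map $(x_1,\dots,x_{2k})\mapsto\bigl(z\mapsto\prod_{j=1}^k(1+\tfrac{\mathrm{e}^{2i\pi x_j}}{\mathrm{e}^{2i\pi x_j}-1}(\mathrm{e}^{2i\pi z x_{k+j}}-1))\bigr)$, which is continuous from $(\R\setminus\Z)^k\times\R^k$ into $\mathcal{C}(\C,\C)$ with the compact-open topology; replacing your family of maps $H_{z_1},\dots,H_{z_m}$ by this single function-space-valued map closes the gap (and, to be fair, only the pointwise version is actually used later in the proof of the main theorem).
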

\begin{proof}
Let $k\in \N^*$. The map $(x_1 , x_2 , \dots ,x_{2k})  \mapsto \left( z \mapsto \prod_{j=1}^k \left( 1  + \frac{\mathrm{e}^{2i\pi x_j}}{\mathrm{e}^{2i\pi x_j} -1 } \left(  \mathrm{e}^{2i\pi z x_{k+j} } -1\right) \right) \right)$ is defined and continuous on $(\R \setminus \Z)^k\times \R^k$. Moreover, $\PP ( \exists j \in [\![ 1, k ]\!], \  \Phi_j \in \Z) = 0$. Thus, the convergence in distribution directly follows from the previous lemma and the continuous mapping theorem.
\end{proof}

\begin{lem}
Let $p\in [0,1/2]$ and let $B_n$ be a random variable following the binomial distribution of parameters $n,p$, for any $n \in \N^*$. Then $(B_n)_{n\geq 1}$ satisfies the following properties:
\begin{enumerate}
	\item[(i)] For all $k \in \N^*$, 
	\[\sup_{n \in \N^*} \PP (B_n = k) \ll \frac{1}{\sqrt{k}}.\]
	\item[(ii)] Let $1\leq m\leq n$. Let $E$ be an ensemble of positive integers such that:
	\begin{itemize}
		\item For all distinct $j_1,j_2 \in E$, $\vert j_1 - j_2 \vert \geq m$.
		\item For all $j \in E$, $j\geq m$.
	\end{itemize} Then
	\[\PP (B_n \in E) = \mathcal{O} \left(\frac{1}{\sqrt{m}}\right),\] 
	where the $\mathcal{O} (\frac{1}{\sqrt{m}})$ is independent of $n$ and $p$.
\end{enumerate}
\end{lem}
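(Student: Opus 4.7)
My plan is to handle (i) via a Stirling-type local estimate together with the unimodality of the binomial PMF, and to handle (ii) by splitting the argument according to whether $\sigma := \sqrt{np(1-p)}$ is smaller or larger than $\sqrt{m}$.

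For (i): Starting from Stirling's formula applied to $\binom{n}{k}$, together with the trivial inequality $e^{-nD(k/n\|p)} \leq 1$ (where $D$ denotes the Kullback--Leibler divergence of Bernoulli$(k/n)$ from Bernoulli$(p)$), one obtains, for $1 \leq k \leq n-1$,
\[
\PP(B_n = k) \;\leq\; \frac{C}{\sqrt{k(n-k)/n}}
\]
with a universal constant $C$. If $k \leq n/2$ the denominator is $\geq \sqrt{k/2}$, which already gives the desired $C'/\sqrt{k}$. If $k > n/2$, the assumption $p \leq 1/2$ places the mode of $B_n$, namely $\lfloor (n+1)p \rfloor$, below $n/2$; unimodality then forces
\[
\PP(B_n = k) \;\leq\; \PP(B_n = \lceil n/2 \rceil) \;\leq\; C''/\sqrt{n} \;\leq\; C''/\sqrt{k},
\]
the last step because $k \leq n$. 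The boundary $k = n$ is handled trivially by $\PP(B_n = n) = p^n \leq 2^{-n}$. Taking the supremum over $n$ preserves these uniform estimates.

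For (ii): Let $\mu = np$. In the regime $\sigma \leq \sqrt{m}$, Bernstein's inequality yields
\[
\PP(|B_n - \mu| > m) \;\leq\; 2\exp\!\left(-\frac{m^2}{2\sigma^2 + 2m/3}\right) \;\leq\; 2\,e^{-3m/8},
\]
which is $O(1/\sqrt{m})$. The interval $[\mu - m, \mu + m]$, being of length $2m$, contains at most three elements of $E$ by the spacing hypothesis, and each such $j$ satisfies $j \geq m$, hence $\PP(B_n = j) \leq C/\sqrt{j} \leq C/\sqrt{m}$ by (i). Summing these two contributions gives $\PP(B_n \in E) = O(1/\sqrt{m})$ in this regime.

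In the opposite regime $\sigma > \sqrt{m}$, I would decompose $\Z$ into annuli $A_k := \{j : k\sigma \leq |j - \mu| < (k+1)\sigma\}$, $k \geq 0$, and note that the spacing hypothesis on $E$ gives $|E \cap A_k| \leq 2\sigma/m + 2$. Refining the bound of part (i) by keeping the full entropy factor and Taylor-expanding $D(j/n \| p)$ around $p$ provides the sub-Gaussian pointwise estimate
\[
\PP(B_n = j) \;\leq\; \frac{C}{\sigma}\exp\!\left(-c\,\min\!\Bigl(\tfrac{(j-\mu)^2}{\sigma^2},\,|j-\mu|\Bigr)\right),
\]
so that $\PP(B_n = j) \leq (C/\sigma)\,e^{-c\min(k^2,\,k\sigma)}$ whenever $j \in A_k$. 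Summing first over each annulus and then over $k$:
\[
\PP(B_n \in E) \;\leq\; \sum_{k \geq 0}\!\left(\tfrac{2\sigma}{m}+2\right)\tfrac{C}{\sigma}e^{-c\min(k^2,\,k\sigma)} \;\leq\; C'\!\left(\tfrac{1}{m} + \tfrac{1}{\sigma}\right),
\]
which is $O(1/\sqrt{m})$ because $\sigma > \sqrt{m}$ and $m \geq 1$.

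The main technical obstacle is the sub-Gaussian pointwise bound in the second regime: one needs to argue simultaneously in the Gaussian bulk (via the second-order Taylor expansion of $D(\cdot\|p)$, which produces the $(j-\mu)^2/\sigma^2$ exponent) and in the exponentially-decaying tails (where $nD(j/n\|p)$ becomes linear in $|j-\mu|$, in Bernstein fashion), and to combine them with the Stirling prefactor $1/\sqrt{j(n-j)/n}$ uniformly in $n$ and $p$. Everything else reduces to routine bookkeeping.
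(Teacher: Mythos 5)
Your part (i) is essentially the paper's own argument: discarding the factor $\mathrm{e}^{-nD(k/n\,\Vert\,p)}\leq 1$ is exactly the paper's observation that $x^k(1-x)^{n-k}$ is maximal at $x=k/n$, and the only difference is cosmetic (for $k>n/2$ you invoke unimodality of the binomial mass function, the paper instead bounds $\PP(B_n=k)\leq\binom{n}{n/4}2^{-n}$ and compares the resulting geometric decay with $k^{-1/2}$). Both give the claim uniformly in $p\in[0,1/2]$.

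Part (ii) is where you genuinely diverge, and where your proposal has a real gap. The paper's proof is elementary and needs nothing beyond (i) and monotonicity: with $f_k=\binom{n}{k}p^k(1-p)^{n-k}$, the sequence increases up to $k_0=\lfloor (n+1)p-1\rfloor$ and decreases afterwards, and on each monotone stretch every selected term except the one closest to the peak is dominated by the average of the $m$ terms separating it from the neighbouring selected index; hence $\PP(B_n\in E)\leq \frac{1}{m}\sum_j f_j+(\text{at most two terms } f_k \text{ with } k\geq m)\leq \frac1m+\mathcal{O}(1/\sqrt m)$ by (i). Your route splits on $\sigma=\sqrt{np(1-p)}$ versus $\sqrt m$. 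The regime $\sigma\leq\sqrt m$ (Bernstein, plus at most three points of $E$ in $[\mu-m,\mu+m]$, each controlled by (i)) is complete and correct. The regime $\sigma>\sqrt m$, however, rests entirely on the asserted local bound $\PP(B_n=j)\leq\frac{C}{\sigma}\exp\bigl(-c\min\bigl(\frac{(j-\mu)^2}{\sigma^2},\vert j-\mu\vert\bigr)\bigr)$ uniformly in $n$ and $p$, which you state but do not prove, and which is the crux of that regime. It is a true estimate, but it is not mere bookkeeping: the Stirling prefactor is $\sqrt{n/(j(n-j))}$, which is \emph{not} $\mathcal{O}(1/\sigma)$ once $j$ is far from $\mu$ (take $j=\mathcal{O}(1)$, $p=1/2$), so you cannot simply juxtapose ``prefactor $\leq C/\sigma$'' with ``$nD(j/n\,\Vert\,p)\geq$ Bernstein exponent''; you must trade part of the exponential decay against the prefactor, with a case analysis comparing $\frac{j}{n}(1-\frac{j}{n})$ to $p(1-p)$ and a separate treatment of $j\in\{0,n\}$. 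Note also that the soft interpolation $\PP(B_n=j)\leq(\max_i\PP(B_n=i))^{1/2}\,\PP(\vert B_n-\mu\vert\geq\vert j-\mu\vert)^{1/2}$ only yields a $1/\sqrt\sigma$ prefactor, and then your final sum gives $\mathcal{O}(\sqrt\sigma/m+1/\sqrt\sigma)$, which is not $\mathcal{O}(1/\sqrt m)$ (the first term is even unbounded for large $\sigma$); so the full $1/\sigma$ prefactor is really needed and must be established. In short: your skeleton for regime two is sound and would work once that uniform local estimate is proved, but as written the key lemma is missing, whereas the paper's unimodality trick bypasses it entirely.
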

\begin{proof}
\underline{Proof of $(i)$}:
Fix $k\in \N^*$. Using Stirling's formula and the fact that $x^k (1-x)^{n-k}$ is maximal for $x=\frac{k}{n}$, 
\begin{align*}
\PP (B_n=k) = \binom{n}{k} p^k (1-p)^{n-k} &\ll \frac{\left(\frac{n}{e}\right)^{n+1/2}}{\left(\frac{n-k}{e}\right)^{n-k+1/2} k!} \left(\frac{k}{n}\right)^k \left(1- \frac{k}{n}\right)^{n-k} \\
	&= \frac{1}{k!} \left(\frac{k}{e}\right)^k \sqrt{\frac{n}{n-k}} \\
	&\ll \sqrt{\frac{n}{k(n-k)}}.
\end{align*}  
Hence, if $1\leq k \leq \frac{3}{4}n$ we have $\PP (B_n=k)\ll \frac{1}{\sqrt{k}}$. If $\frac{3}{4} n \leq k \leq n$, as $x \to x^k (1-x)^{n-k}$ is increasing on $[0 , \frac{k}{n}]$ and $p\leq \frac{1}{2}\leq \frac{k}{n}$ by hypothesis,   
\[\PP (B_n = k) \leq \binom{n}{\frac{n}{4}} \left(\frac{1}{2}\right)^n \ll \left( \frac{\frac{1}{2}}{\left(\frac{1}{4}\right)^{1/4} \left(\frac{3}{4}\right)^{3/4} }  \right)^n = \left( \frac{2}{3^{3/4}}  \right)^n \leq \left( \frac{2}{3^{3/4}}  \right)^k \ll \frac{1}{\sqrt{k}}.\] 

\underline{Proof of $(ii)$}:
Fix $n$ and denote $f_k:= \binom{n}{k} p^k (1-p)^{n-k}$. For all $0\leq k<n$, it is easy to check that 
\[\frac{f_{k+1}}{f_k} < 1 \Longleftrightarrow k > (n+1)p-1,\]
so the sequence $(f_k)_{k \in [\![ 0 , n ]\!]}$ is increasing for $k\leq k_0$, and decreasing for $k\geq k_0 +1$, where $k_0:= \lfloor (n+1)p-1 \rfloor$. \\
If $m\geq k_0+1$, then 
\[\PP (B_n \in E) = \sum_{\substack{k\in E \\ k\leq n }} f_k \leq \sum_{r=0}^{\left\lfloor \frac{n}{m} \right\rfloor -1  } f_{m + mr} \leq f_{m} + \frac{1}{m} \sum_{j=m}^{ m\left\lfloor \frac{n}{m} \right\rfloor -1} f_j \leq f_{m} + \frac{1}{m} \sum_{j=0}^{n} f_j = f_m + \frac{1}{m},\]
and by $(i)$ we deduce $\PP (B_n \in E) \ll \frac{1}{\sqrt{m}}$.\\
If $m\leq k_0$, then we look separately at the increasing part and the decreasing part: on the one hand,
\[\sum_{\substack{k\in E \\ k\leq k_0 }} f_k \leq \sum_{r=0}^{\left\lfloor \frac{k_0}{m} \right\rfloor } f_{k_0 -mr} \leq f_{k_0} + \frac{1}{m} \sum_{j=k_0 - m \left\lfloor \frac{k_0}{m} \right\rfloor  +1}^{k_0} f_j \leq f_{k_0} + \frac{1}{m} \sum_{j=0}^{k_0} f_j, \]
and on the other hand, 
\[\sum_{\substack{k\in E \\ k_0 < k \leq n}} f_k \leq \sum_{r=0}^{\left\lfloor \frac{n-(k_0 + 1)}{m} \right\rfloor  } f_{k_0 + 1 + mr} \leq f_{k_0 + 1} + \frac{1}{m} \sum_{j=k_0 + 1}^{k_0 + m\left\lfloor \frac{n-(k_0 + 1)}{m} \right\rfloor } f_j \leq f_{k_0 + 1} + \frac{1}{m} \sum_{j=k_0 + 1}^{n} f_j .\]
Thus,
\[\PP (B_n \in E) \leq f_{k_0} + f_{k_0 + 1} + \frac{1}{m} \sum_{j=0}^{n} f_j =  f_{k_0} + f_{k_0 + 1} + \frac{1}{m},\]
and by $(i)$ we deduce $\PP (B_n \in E) \ll \frac{1}{\sqrt{k_0}} + \frac{1}{m} \ll \frac{1}{\sqrt{m}}$. 
\end{proof}

\begin{lem}\label{lem:distanceInteger}
Assume $\alpha$ is an irrational number of finite type $\eta \geq 1$. Let $(a_j)_{j\geq 1}$ be any arbitrary sequence of positive real numbers. For all $j$, let $E_j:= \{ \ell \in \N^* : \ \Vert \alpha \ell \Vert \leq a_j \}$. Then for all $\nu > \eta$, there exists a number $c_\nu$ depending only on $\nu$ and $a_1$, such that for all $j$,
\[\sup\limits_{n\in \N^*} \PP_y ( \ell_{n,j}  \in E_j ) \leq c_\nu  a_j^{\frac{1}{2\nu}}.\]
\end{lem}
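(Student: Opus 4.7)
The plan is to apply assertion $(ii)$ of the preceding binomial lemma to $E = E_j$, with a choice $m = m_j$ of the order of $a_j^{-1/\nu}$. To do so one must check that $E_j$ is sparse in two senses: its elements are pairwise separated by at least $m_j$, and they are all bounded below by $m_j$. Both properties will come from the finite-type condition on $\alpha$.

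First I would turn the hypothesis of finite type into a uniform lower bound on $\Vert \alpha \ell \Vert$. Fix $\nu > \eta$. By definition of the type we have $\liminf_{\ell \to +\infty} \ell^\nu \Vert \alpha \ell \Vert > 0$, and the irrationality of $\alpha$ forces $\Vert \alpha \ell \Vert > 0$ for every $\ell \geq 1$. Hence the positive sequence $(\ell^\nu \Vert \alpha \ell \Vert)_{\ell \geq 1}$ is bounded away from $0$, which provides a constant $c = c(\nu, \alpha) > 0$ such that
\[
\Vert \alpha \ell \Vert \geq c \, \ell^{-\nu}, \qquad \forall \ell \geq 1.
\]

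Next I would set $m_j := (c/(2 a_j))^{1/\nu}$ and verify the two hypotheses of assertion $(ii)$. For the spacing: if $\ell_1 < \ell_2$ both lie in $E_j$, the triangle inequality for $\Vert \cdot \Vert$ yields $\Vert \alpha(\ell_2 - \ell_1) \Vert \leq 2 a_j$, which combined with the lower bound above gives $\ell_2 - \ell_1 \geq (c/(2 a_j))^{1/\nu} = m_j$. For the lower bound on elements: any $\ell \in E_j$ satisfies $c\, \ell^{-\nu} \leq \Vert \alpha \ell \Vert \leq a_j$, hence $\ell \geq (c/a_j)^{1/\nu} \geq m_j$.

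It then remains to apply the binomial lemma conditionally on $y$. A basic property of $\nabla^\prime$ is that $y_j \leq 1/j$ for every $j$, so $y_j \leq 1/2$ almost surely as soon as $j \geq 2$. Since $\ell_{n,j}$ is $\mathrm{Binomial}(n, y_j)$ conditionally on $y$, assertion $(ii)$ of the previous lemma gives
\[
\sup_{n \in \N^*} \PP_y(\ell_{n,j} \in E_j) \ll \frac{1}{\sqrt{m_j}} \ll a_j^{1/(2\nu)}
\]
with an implicit constant depending only on $\nu$ and $\alpha$. The case $j = 1$ is excluded from this argument because $y_1$ may exceed $1/2$; it is disposed of by the trivial bound $\PP_y(\ell_{n,1} \in E_1) \leq 1$ and absorbed by choosing $c_\nu \geq a_1^{-1/(2\nu)}$, which is precisely why the final constant depends on $a_1$. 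The main obstacle is conceptual rather than computational: it is the passage from the finite-type condition to the spacing property of $E_j$, with exactly the exponent $1/\nu$ needed to match the $1/\sqrt{m}$ output of assertion $(ii)$ against the target exponent $1/(2\nu)$ in $a_j$.
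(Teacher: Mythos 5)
Your proposal is correct and follows essentially the same route as the paper: convert the finite-type hypothesis into the lower bound $\Vert \alpha \ell \Vert \geq c\,\ell^{-\nu}$, deduce that $E_j$ has elements and gaps of size at least a constant times $a_j^{-1/\nu}$, then apply part $(ii)$ of the binomial lemma conditionally on $y$ (using $y_j \leq 1/2$ for $j \geq 2$) and absorb the $j=1$ case into a constant involving $a_1^{-1/(2\nu)}$. The only cosmetic difference is your justification of $y_j \leq 1/2$ via $y_j \leq 1/j$ instead of the paper's "the $y_j$ sum to one and $y_1$ is the largest", which is equivalent.
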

\begin{proof}
Let $\nu >\eta$. From \eqref{eq:type}, 
\[\exists C >0 , \ \forall \ell \in \N^*, \ \Vert \alpha \ell \Vert \geq \frac{C}{\ell^\nu}. \]
Let $(a_j)_{j \geq 1}$ be a sequence of positive real numbers. For all $j \in \N^*$, it is easy to check that the set $E_j:= \{ \ell \in \N^* : \ \Vert \alpha \ell \Vert \leq a_j \}$ satisfies
\begin{itemize}
	\item $\forall \ell \in E_j$, $\ell \geq C^{1/\nu} a_j^{-1/\nu }$.
	\item $\forall \ell_1 \neq \ell_2 \in E_j$, $\vert \ell_1 - \ell_2 \vert \geq \left(\frac{C}{2} \right)^{1/\nu} a_j^{-1/\nu }$.
\end{itemize}   
Moreover, by construction, conditionally on $y$ each random variable $\ell_{n,j}$ follows a binomial distribution of parameters $n , y_j$. Since almost surely the $y_j$ sum to one and $y_1$ is the largest, then $y_j\leq \frac{1}{2}$ for all $j\neq 1$, hence the previous lemma applies with $m= \left( \frac{C}{2} \right)^{1/\nu} a_j^{-1/\nu }$ and gives for all $j\geq 2$,
\[\PP_y ( \ell_{n,j}  \in E_j) = \mathcal{O}\left(  a_j^{\frac{1}{2\nu}} \right), \]
where the $\mathcal{O}\left(  a_j^{\frac{1}{2\nu}} \right)$ is independent of $n$ and of the $y_j$, $j \geq  2$. In other words, there exists a number $b_\nu$ depending only on $\nu$ and $y$, such that for all $j \geq 2$, 
\[\sup\limits_{n\in \N^*} \PP_y ( \ell_{n,j}  \in E_j ) \leq b_\nu  a_j^{\frac{1}{2\nu}}.\]
Finally we get the result taking $c_\nu:= \max \left( b_\nu , a_1^{-\frac{1}{2\nu}} \right)$.
\end{proof}

\begin{prop}\label{prop:convpart2}
Conditionally on $y$, for all $\varepsilon >0$ and for all compact subsets $K$ of $\C$, 
\[\sup_{n \in \N^*} \PP_y\left( \sup_{z\in K} \left\vert \prod_{\substack{j>k \\ \ell_{n,j}>0}} \left( 1  + \frac{\mathrm{e}^{2i\pi \alpha \ell_{n,j}}}{\mathrm{e}^{2i\pi \alpha \ell_{n,j}} -1 } \left(  \mathrm{e}^{2i\pi z y_j^{(n)}} -1  \right) \right) -1\right\vert \geq \varepsilon \right) \underset{k\to \infty}{\longrightarrow} 0.\]
\end{prop}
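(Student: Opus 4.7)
The strategy is to isolate a ``bad event'' on which some denominator $\mathrm{e}^{2i\pi\alpha\ell_{n,j}}-1$ is dangerously close to zero, and on its complement to control $\sum|T_j|$ by a first-moment estimate. Fix $R>0$ such that $K\subset\{|z|\leq R\}$, and for $\ell_{n,j}>0$ denote
\[T_j(z):=\frac{\mathrm{e}^{2i\pi\alpha\ell_{n,j}}}{\mathrm{e}^{2i\pi\alpha\ell_{n,j}}-1}\bigl(\mathrm{e}^{2i\pi z y_j^{(n)}}-1\bigr).\]
The elementary estimates $|\mathrm{e}^{2i\pi\alpha\ell}-1|=2|\sin(\pi\|\alpha\ell\|)|\geq 4\|\alpha\ell\|$ and $|\mathrm{e}^w-1|\leq|w|\mathrm{e}^{|w|}$ yield, for every $z\in K$,
\[|T_j(z)|\leq \frac{C_R\,y_j^{(n)}}{\|\alpha\ell_{n,j}\|},\qquad C_R:=\tfrac{\pi R}{2}\mathrm{e}^{2\pi R}.\]

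Fix any $\nu>\eta$ and set $a_j:=r^{j/2}$, where $r$ is the exponential decay rate of $p$ from Definition~\ref{def:expdec}. Define the bad event
\[A_{k,n}:=\{\exists\, j>k:\ell_{n,j}>0,\ \|\alpha\ell_{n,j}\|\leq a_j\}.\]
By Lemma~\ref{lem:distanceInteger} and a union bound,
\[\PP_y(A_{k,n})\leq c_\nu\sum_{j>k}r^{j/(4\nu)},\]
the tail of a convergent geometric series, independent of $n$, vanishing as $k\to\infty$.

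On $A_{k,n}^c$ one has $|T_j(z)|\leq C_R y_j^{(n)}/a_j$ for every $j>k$ with $\ell_{n,j}>0$, hence, using $|\prod(1+T_j)-1|\leq\exp(\sum|T_j|)-1$,
\[\sup_{z\in K}\Bigl|\!\prod_{\substack{j>k\\ \ell_{n,j}>0}}\!(1+T_j(z))-1\Bigr|\leq \exp\!\bigl(C_R\,S_{k,n}\bigr)-1, \qquad S_{k,n}:=\sum_{j>k}\frac{y_j^{(n)}}{a_j}.\]
Conditionally on $y$, $\E_y(y_j^{(n)})=y_j$, and the exponential decay of $y$ gives $\E_y(S_{k,n})\leq C(y)\sum_{j>k}r^{j/2}$, a vanishing geometric tail; Markov's inequality then provides $\sup_n\PP_y(S_{k,n}\geq\eta)\to 0$ as $k\to\infty$, for every $\eta>0$. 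Given $\varepsilon>0$, choose $\eta$ so that $\mathrm{e}^{C_R\eta}-1<\varepsilon$; then
\[\sup_n\PP_y\!\Bigl(\sup_{z\in K}\bigl|\!\prod_{\substack{j>k\\ \ell_{n,j}>0}}\!(1+T_j(z))-1\bigr|\geq\varepsilon\Bigr)\leq \sup_n\PP_y(A_{k,n})+\sup_n\PP_y(S_{k,n}\geq\eta)\xrightarrow[k\to\infty]{}0,\]
which is the claim.

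The main obstacle is the absence of any deterministic lower bound on the denominators $\mathrm{e}^{2i\pi\alpha\ell_{n,j}}-1$: they can be arbitrarily small because $\alpha\ell_{n,j}$ may happen to be extremely close to an integer. This is precisely where the finite-type assumption on $\alpha$, harnessed through Lemma~\ref{lem:distanceInteger}, becomes indispensable; it exactly quantifies how unlikely such a coincidence is and allows one to choose the threshold sequence $(a_j)$ small enough for $\sum a_j^{1/(2\nu)}$ to be summable while remaining large enough to keep $\sum y_j^{(n)}/a_j$ integrable.
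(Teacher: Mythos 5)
Your proof is correct and follows essentially the same route as the paper: the same pointwise bound $\vert T_j(z)\vert \le C_K\, y_j^{(n)}/\Vert \alpha \ell_{n,j}\Vert$, the same bad event of small denominators $\Vert \alpha \ell_{n,j}\Vert \le a_j$ with geometric thresholds controlled via Lemma~\ref{lem:distanceInteger} and a union bound, and the exponential decay of $y$ on the complement. The only difference is cosmetic: you conclude with a single Markov inequality on the aggregated sum using $\E ( y_j^{(n)} \ \vert \ y) = y_j$, whereas the paper inserts a second truncation $y_j^{(n)} < \rho^j$ (with $r<\rho<s<1$) and finishes with a deterministic geometric bound; both are valid and rest on the same ingredients.
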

\begin{proof}
Let $\varepsilon >0$ and let $K$ be a compact subset of $\C$. It suffices to show that conditionally on $y$,  
\[\sup_{n \in \N^*} \PP_y \left( \sum_{\substack{j>k \\ \ell_{n,j}>0}} \frac{y_j^{(n)}}{\Vert \alpha \ell_{n,j} \Vert} \geq \varepsilon\right) \underset{k\to \infty}{\longrightarrow} 0,\]
Indeed, for all $n,j$ such that $\ell_{n,j}>0$,
\begin{equation}\label{eq:yjellbound}
\left\vert \frac{\mathrm{e}^{2i\pi \alpha \ell_{n,j}}}{\mathrm{e}^{2i\pi \alpha \ell_{n,j}} -1 } \left(  \mathrm{e}^{2i\pi z y_j^{(n)}} -1  \right) \right\vert \leq \frac{1}{\min ( \{ \alpha \ell_{n,j} \} , 1- \{\alpha \ell_{n,j}\} )} C_K y_j^{(n)} = \frac{C_K y_j^{(n)}}{\Vert \alpha \ell_{n,j} \Vert} 
\end{equation}
where $C_K$ is a constant number that only depends on $K$, and moreover for all sequences of functions $g_j$, 
\begin{align*}
\left\vert \prod_j (1+g_j (z)) -1 \right\vert &\leq \prod_j (1+ \vert g_j (z) \vert ) -1 \\\
	&\leq \prod_j (1+ \sup_K \vert g_j \vert ) - 1 \\
	&\leq \exp \left( \sum_j \sup_K \vert g_j \vert \right) - 1.
\end{align*}  
Let $n,k\geq 1$. Let $s,\rho \in (0,1)$ such that $r<\rho <s<1$. We have, denoting $A:=\left\{ \sum\limits_{\substack{j>k \\ \ell_{n,j}>0}} \frac{y_j^{(n)}}{\Vert \alpha \ell_{n,j} \Vert} \geq \varepsilon \right\}$ and $B_j:=\left\{ \Vert \alpha \ell_{n,j} \Vert \leq s^j , \ \ell_{n,j}>0 \right\}$,
\begin{align*}
\PP_y (A) &= \PP_y (A \cap B_{k+1}) +  \PP_y (A \cap B_{k+2}) + \dots +  \PP_y (A \cap ( B_{k+1} \cup B_{k+2} \cup \dots )^\complement ) \\
	&\leq \left(\sum_{j>k} \PP_y (B_j) \right) + \PP_y (A \cap ( B_{k+1} \cup B_{k+2} \cup \dots )^\complement ).
\end{align*}
On the one hand, for all $j$, $\PP_y (B_j) = \PP_y (\ell_{n,j} \in E_j)$ with $E_j:= \{ \ell \in \N^* : \ \Vert \alpha \ell \Vert \leq s^j \}$, then it follows from Lemma~\ref{lem:distanceInteger} that $\PP_y (B_j) = \mathcal{O}\left( s^{\frac{j}{2\nu}}\right)$ independent of $n$, hence 
\[\sup_{n\in \N^*} \sum_{j>k} \PP_y (B_j) \underset{k\to \infty}{\longrightarrow} 0. \]
On the other hand, 
\begin{align*}
\PP_y (A \cap ( B_{k+1} \cup B_{k+2} \cup \dots )^\complement ) &= \PP_y (A \cap (\forall j>k , \ \Vert \alpha \ell_{n,j} \Vert > s^j \ \text{ or } \ell_{n,j}=0 ) ) \\
	&\leq \PP_y \left(\widetilde{A} \right),
\end{align*}
with $\widetilde{A}:=\left\{ \sum\limits_{\substack{j>k \\ \ell_{n,j}>0}} \frac{y_j^{(n)}}{s^j} \geq \varepsilon \right\}$, and furthermore denoting $\widetilde{B}_j:=\left\{ y_j^{(n)} \geq \rho^j \right\}$ it comes
\begin{align*}
\PP_y (\widetilde{A}) &\leq \left( \sum_{j>k} \PP_y (\widetilde{B}_j) \right) + \PP_y (\widetilde{A} \cap (\forall j > k , \ y_j^{(n)} < \rho^j ) ) \\
	&\leq \left(\sum_{j>k} \frac{\E (y_j^{(n)} \ \vert \ y)}{\rho^j} \right) + \PP_y \left( \sum_{j>k} \frac{\rho^j}{s^j} \geq \varepsilon \right) \\
	&= \left(\sum_{j>k} \frac{y_j}{\rho^j} \right) + \mathds{1}_{\sum\limits_{j>k} \left(\frac{\rho}{s}\right)^j \geq \varepsilon}
\end{align*}
independent of $n$, hence 
\[\sup_{n\in \N^*} \PP_y (A \cap ( B_{k+1} \cup B_{k+2} \cup \dots )^\complement ) \underset{k\to \infty}{\longrightarrow} 0. \]
Consequently,
\[\sup_{n \in \N^*} \PP_y \left( \sum_{\substack{j>k \\ \ell_{n,j}>0}} \frac{y_j^{(n)}}{\Vert \alpha \ell_{n,j} \Vert} \geq \varepsilon  \right) \underset{k\to \infty}{\longrightarrow} 0. \]
\end{proof}

\begin{prop}\label{prop:convpart3}
Let $K$ be a compact subset of $\C$. Conditionally on $y$, the sequence $(\sup_K \vert \xi_{n,\alpha }\vert)_{n\geq 1}$ is tight.
\end{prop}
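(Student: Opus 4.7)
The plan is to reduce the statement to tightness of a single scalar random variable. Setting
\[S_n := \sum_{\substack{j\geq 1 \\ \ell_{n,j}>0}} \frac{y_j^{(n)}}{\Vert \alpha\ell_{n,j}\Vert},\]
I would first establish the deterministic bound $\sup_{z\in K}|\xi_{n,\alpha}(z)| \leq \exp(C_K S_n)$ for a constant $C_K$ depending only on $K$. This follows exactly as in the opening of the proof of Proposition~\ref{prop:convpart2}: each factor in the product defining $\xi_{n,\alpha}$ has modulus at most $1 + C_K y_j^{(n)}/\Vert\alpha\ell_{n,j}\Vert$ by \eqref{eq:yjellbound}, and $\prod_j(1+x_j) \leq \exp(\sum_j x_j)$. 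Tightness of $(\sup_K|\xi_{n,\alpha}|)_{n\geq 1}$ then reduces to tightness of $(S_n)_{n\geq 1}$ under $\PP_y$.

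To prove tightness of $S_n$, I would split $S_n = S_n^{\leq k} + S_n^{>k}$ and handle the two pieces separately. For the tail, Proposition~\ref{prop:convpart2} applied with a fixed threshold (say $\varepsilon=1$) produces a $k=k(\varepsilon)$ such that $\sup_n \PP_y(S_n^{>k} > 1) < \varepsilon/2$. For the head, I would use $y_j^{(n)} \leq 1$ to obtain $S_n^{\leq k} \leq \sum_{j=1}^{k} 1/\Vert\alpha\ell_{n,j}\Vert$; since a finite sum of tight sequences is tight, it suffices to show that $(1/\Vert\alpha\ell_{n,j}\Vert)_{n\geq 1}$ is tight under $\PP_y$ for each fixed $j\in\{1,\dots,k\}$. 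For $j\geq 2$ this is immediate from Lemma~\ref{lem:distanceInteger} applied with $a_j = 1/M$, which gives $\PP_y(\Vert\alpha\ell_{n,j}\Vert < 1/M) \leq c_\nu M^{-1/(2\nu)} \to 0$ uniformly in $n$. Combining head and tail would then yield $\sup_n\PP_y(S_n > 1+M_{k,\varepsilon}) < \varepsilon$.

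The main obstacle will be the index $j=1$, where $y_1$ may exceed $1/2$ so the binomial estimate underlying Lemma~\ref{lem:distanceInteger} does not apply as stated. To circumvent this, I would use the symmetry $\ell_{n,1} \overset{d}{=} n - B_n'$ with $B_n' \sim \mathrm{Bin}(n, 1-y_1)$ under $\PP_y$: the shifted set $\{m\in\N : \Vert\alpha(n-m)\Vert < 1/M\}$ still has consecutive elements spaced by at least $\gtrsim M^{1/\nu}$ by the finite type property of $\alpha$, so running the monotonicity argument of the binomial lemma with respect to $B_n'$ (whose parameter is now $\leq 1/2$) yields $\sup_n\PP_y(\Vert\alpha\ell_{n,1}\Vert < 1/M)\to 0$ as $M\to\infty$. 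This is valid for $p$-almost every $y$, namely whenever $y_1\in(0,1)$, which closes the argument.
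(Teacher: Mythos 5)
Your argument is correct in substance and rests on the same two ingredients as the paper's proof: the pointwise bound \eqref{eq:yjellbound}, which reduces $\sup_K\vert\xi_{n,\alpha}\vert$ to the scalar sum $S_n$, and the spacing/binomial estimate behind Lemma~\ref{lem:distanceInteger}. The organization differs, though. The paper proves tightness of the whole sum in one stroke via the three-term estimate \eqref{eq:tightsum}, with the geometric thresholds $\Vert\alpha\ell_{n,j}\Vert\leq\frac{1}{A}s^j$ and $y_j^{(n)}\geq A\rho^j$ and the choice $M=A^3$; you instead split $S_n$ into head and tail, control the tail by the bound $\sup_n\PP_y\bigl(\sum_{j>k}y_j^{(n)}/\Vert\alpha\ell_{n,j}\Vert\geq\varepsilon\bigr)\to 0$ (note that this is the intermediate claim established \emph{inside} the proof of Proposition~\ref{prop:convpart2}; the statement of that proposition concerns the product, so cite the intermediate estimate rather than the proposition), and for the head you drop the weights via $y_j^{(n)}\leq 1$ and prove per-index tightness of $1/\Vert\alpha\ell_{n,j}\Vert$. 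Both routes work; the paper's version yields an explicit polynomial rate in $A$ for the full sum, yours is more modular. The genuinely novel point in your write-up is the index $j=1$: you are right that Lemma~\ref{lem:distanceInteger} is vacuous there (its constant $c_\nu\geq a_1^{-1/(2\nu)}$ makes the $j=1$ bound trivial), a point the paper's proof passes over when it sums the first term of \eqref{eq:tightsum} over all $j$, and your symmetrization $\ell_{n,1}\overset{d}{=}n-B_n'$ with $B_n'\sim\mathrm{Bin}(n,1-y_1)$ is the natural repair when $y_1>1/2$.

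One step of that repair needs to be spelled out: the shifted set $\{m\in\N:\ \Vert\alpha(n-m)\Vert<1/M\}$ does keep spacing $\gtrsim M^{1/\nu}$, but it need not satisfy the hypothesis of the binomial lemma that all its elements are at least $m$ (it may contain $m=0$ or other small values), so the lemma cannot be quoted verbatim and the monotonicity argument alone does not suffice when the mode of $B_n'$ is small. To close this, observe first that by the finite type property the event $\{\Vert\alpha\ell_{n,1}\Vert<1/M,\ \ell_{n,1}>0\}$ is empty unless $n>(CM)^{1/\nu}$; for such $n$, the at most one element of the shifted set lying below the spacing threshold, as well as the two values of the $\mathrm{Bin}(n,1-y_1)$ mass function at and next to its mode, are each bounded by the maximum of that mass function, which is $\mathcal{O}\bigl(1/\sqrt{n\,y_1(1-y_1)}\bigr)=\mathcal{O}\bigl(M^{-1/(2\nu)}\bigr)$ with a constant depending only on $y_1$ (which is fine, since everything is conditional on $y$). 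With this patch, and under the standing assumption $y_1<1$ almost surely (which the statement itself requires: if $y_1=1$ then $\xi_{n,\alpha}$ blows up along a subsequence with $\Vert\alpha n\Vert\to 0$, an issue the paper also leaves implicit), your argument is complete.
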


\begin{proof}
It follows from Proposition~\ref{prop:convpart2} that for all $\delta >0$,
\[ \PP_y\left( \sup_{z\in K} \left\vert \prod_{\substack{j>k \\ \ell_{n,j}>0}} \left( 1  + \frac{\mathrm{e}^{2i\pi \alpha \ell_{n,j}}}{\mathrm{e}^{2i\pi \alpha \ell_{n,j}} -1 } \left(  \mathrm{e}^{2i\pi z y_j^{(n)}} -1  \right) \right) \right\vert < 1.01 \right) \geq 1-\delta \]
for all $k$ large enough, say $k \geq k_0 (\delta)$ independent of $n$. Fix $\delta$. If we show that the sequence of the supremum on $K$ of the product for $j\leq k_0 (\delta )$ is tight, then for all $A$ large enough (depending on $\delta$) we will have
 \[ \PP_y\left( \sup_{z\in K} \left\vert \prod_{\substack{j\leq k_0 (\delta ) \\ \ell_{n,j}>0}} \left( 1  + \frac{\mathrm{e}^{2i\pi \alpha \ell_{n,j}}}{\mathrm{e}^{2i\pi \alpha \ell_{n,j}} -1 } \left(  \mathrm{e}^{2i\pi z y_j^{(n)}} -1  \right) \right) \right\vert < A \right) \geq 1-\delta \]
so that 
\[\PP_y (\sup_K \vert \xi_{n,\alpha }\vert < 1.01 A) \geq 1-2\delta ,\]
which gives the desired tightness. From \eqref{eq:yjellbound}, the tightness of the supremum for finite products are implied by the tightness of 
\[\sum_{\substack{j\leq k \\ \ell_{n,j}>0}} \frac{y_j^{(n)}}{\Vert \alpha \ell_{n,j} \Vert}\]
for every fixed $k$. 
We have, for all $A,M>0$, and all $s,\rho$ such that $r<\rho<s<1$, 
\begin{align}\label{eq:tightsum}
\begin{split}
 \PP_y \left( \sum_j \frac{y_j^{(n)}}{\Vert \alpha \ell_{n,j} \Vert} \geq M \right) &\leq  \sum_j \PP_y \left( \Vert \alpha \ell_{n,j} \Vert \leq \frac{1}{A}s^j , \ell_{n,j} >0 \right) \\
 &\quad + \sum_j \PP_y (y_j^{(n)} \geq A \rho^j ) + \PP_y \left( \sum_j \frac{A \rho^j}{\frac{1}{A} s^j} \geq M \right)
\end{split}
\end{align}
Applying Lemma~\ref{lem:distanceInteger} with $a_j= \frac{1}{A}s^j$, the first right-hand-side term of \eqref{eq:tightsum} is dominated by $A^{-\frac{1}{2\nu}}$ for any $\nu$ greater than the type of $\alpha$, independently of $n$. Using Markov's inequality and the decay property of the sequence $y_j$, the second RHS term of \eqref{eq:tightsum} is dominated by $A^{-1}$, independently of $n$. Finally, the third RHS term of \eqref{eq:tightsum} equals 
\[\mathds{1}_{\sum_j \left(\frac{\rho}{s}\right)^j \geq \frac{M}{A^2}}.\]
Hence, taking $M=A^3$ we get  
\[\sup_{n \in \N^*}   \PP_y \left( \sum_j \frac{y_j^{(n)}}{\Vert \alpha \ell_{n,j} \Vert} \geq M \right) \underset{M\to +\infty}{\longrightarrow} 0\]
and the proof is complete.
\end{proof}

\vspace*{1cm}

\begin{proof}[Proof of Theorem~\ref{thm:convergence} $(ii)$]
First, let us prove the convergence in law of $\xi_{n,\alpha}$ to $\widetilde{\xi}_\infty$ in the topology of pointwise convergence. Let $s \in \C$. For the sake of simplicity we write for all $n$ and $k$, $X_n = \xi_{n,\alpha} (s) = X_{n,k} Y_{n,k}$ where $X_{n,k}$ is the product of $\frac{\mathrm{e}^{2i\pi \left(\frac{s}{n} + \alpha \right)\ell_{n,j}} -1}{\mathrm{e}^{2i\pi \alpha \ell_{n,j}} -1}$ for $j\leq k$ and $Y_{n,k}$ is the remaining infinite product $j>k$. \\

By Proposition~\ref{prop:convpart2}, conditionally on $y$, for all $\varepsilon >0$, 
\[\PP_y (\vert Y_{n,k} -1 \vert \geq \varepsilon ) \leq \delta (\varepsilon , k)\]
where $\delta (\varepsilon , k)$ is independent of $n$ and tends to $0$ as $k$ goes to infinity. Moreover, the sequence $(X_{n,k})_{n,k\geq 1}$ is tight. Thus, 
\begin{align*}
\PP_y (\vert X_n - X_{n,k} \vert \geq \sqrt{\varepsilon} ) &\leq \PP_y (\vert X_n - X_{n,k} \vert \geq \varepsilon \vert X_{n,k} \vert ) + \PP_y ( \vert X_{n,k} \vert \geq \frac{1}{\sqrt{\varepsilon}}) \\
	&\leq \delta (\varepsilon ,k) + \eta (\varepsilon )
\end{align*}  
where $\eta (\varepsilon )$ is independent of $n$ and $k$ and tends to $0$ with $\varepsilon$.

Besides, by Proposition~\ref{prop:convpart1}, conditionally on $y$, $(X_{n,k})_{n\geq 1}$ converges in distribution to 
\[X_{\infty,k}:= \prod_{j\leq k} \frac{\mathrm{e}^{2i\pi s y_j} - u_j}{1-u_j}.\]

Let $\Phi$ be a bounded Lipschitz function on $\C$ with Lipschitz constant equal to $1$. Then, denoting $\E_y (\cdot ) = \E (\cdot  \ \vert \ y)$,
\begin{align*}
\vert \E_y (\Phi (X_n)) - \E_y (\Phi (X_{\infty , k})) \vert &\leq \E_y \vert \Phi (X_n) - \Phi (X_{n,k}) \vert + \vert \E_y (\Phi (X_{n,k})) - \E_y (\Phi (X_{\infty ,k})) \vert \\
	&\leq \sqrt{\varepsilon} + 2\max \vert \Phi \vert (\delta (\varepsilon , k) + \eta (\varepsilon )) + \nu (n,k) 
\end{align*}
where $\nu (n,k)$ tends to $0$ as $n$ goes to infinity. \\
Let $V$ be an accumulation point of the sequence $(\E_y (\Phi (X_n)))_{n\geq 1}$. Then
\[\vert V - \E_y (\Phi (X_{\infty , k})) \vert \leq \sqrt{\varepsilon} + 2\max \vert \Phi \vert (\delta (\varepsilon , k) + \eta (\varepsilon )).\]
Letting $k\to \infty$ gives
\[\vert V - \E_y (\Phi (X_\infty )) \vert \leq \sqrt{\varepsilon} + 2\max \vert \Phi \vert \eta (\varepsilon ),\]
where $X_\infty := \widetilde{\xi}_\infty (s)$, and finally letting $\varepsilon \to 0$ we get $V=\E_y (\Phi (X_\infty ))$. Consequently, $(\E_y (\Phi (X_n)))_{n\geq 1}$ has a unique accumulation point, so it converges to $\E_y (\Phi (X_\infty ))$. Conditionally on $y$, this proves the convergence in distribution of $X_n$ to $X_\infty$, that is to say $\xi_{n,\alpha}$ converges in distribution to $\widetilde{\xi}_\infty$ at every fixed point. For any finite number of points, we deduce the same result (instead of $X_n (s)$ we take $(X_n (s_1) , \dots , X_n (s_k))$ and apply a similar reasoning). It follows that, conditionally on $y$, $\xi_{n, \alpha}$ converges in distribution to $\widetilde{\xi}_\infty$ in the topology of pointwise convergence.  

Now, we want to extend to the topology of uniform convergence. By Proposition~\ref{prop:convpart3}, for every compact set $K$, the sequence $(\sup_K \vert \xi_{n,\alpha }\vert)_{n\geq 1}$ is tight. As the functions $\xi_{n,\alpha}$ are holomorphic, a straightforward consequence of Cauchy's differentiation formula is that $(\sup_K \vert \xi_{n,\alpha }^\prime \vert)_{n\geq 1}$ is also tight. In particular, for all $\varepsilon >0$ and for all $A$ large enough, 
\[\limsup_{n\to \infty} \PP (\sup_K \vert \xi_{n,\alpha}^\prime \vert \geq A ) < \varepsilon.\]
The mean value inequality implies that for all $\delta >0$, there exists $\Delta >0$ such that 
\[\limsup_{n\to \infty} \PP (\sup\limits_{\substack{\vert s_1 - s_2\vert < \Delta \\ s_1, s_2 \in K}} \vert \xi_{n,\alpha} (s_1) - \xi_{n,\alpha} (s_2) \vert \geq \delta ) < \varepsilon\]
(taking $\Delta := \delta/A$). This is exactly the property given in Definition 2.4 in \cite{chhaibi2017limiting} applied to the functions $\Phi_n = \xi_{n,\alpha}$ for $n \geq 1$: they are said to be \textbf{in probability compact-equicontinuous}. Finally, Lemma 2.5 of \cite{chhaibi2017limiting} allows us to conclude that conditionally on $y$, $\xi_{n,\alpha}$ converges also in law in the topology of uniform convergence on compact sets.   
In other words, for all bounded functional $F$ from $\mathcal{C}(\C$,$\C$) to $\C$, continuous with respect to the topology of the uniform convergence on compact sets,
\[\E \left[ F(\xi_{n,\alpha })  \ \vert \ y  \right] \overset{\text{a.s.}}{\underset{n\to \infty}{\longrightarrow}} \E \left[ F(\widetilde{\xi}_\infty ) \ \vert \ y  \right].\] 
Hence by dominated convergence 
\[\E \left[ F(\xi_{n,\alpha })  \right] \underset{n\to \infty}{\longrightarrow} \E \left[ F(\widetilde{\xi}_\infty ) \right].\]
\end{proof}

\section{Properties of the limiting function}
\label{sec:properties}

In the following proposition we show that $\widetilde{\xi}_\infty$ has order one, in the sense of entire functions. In other words, the infimum of all $\beta$ such that $\widetilde{\xi}_\infty (z)= \mathcal{O}(\exp (\vert z \vert^\beta ))$ as $\vert z \vert \to \infty$ equals one. The bound $\mathcal{O}(\exp (c\vert z \vert ))$ for this model can be compared to the one for the unitary case $\mathcal{O}(\exp (c\vert z \vert  \log \vert z \vert ))$ established by Chhaibi, Najnudel and Nikeghbali in \cite{chhaibi2017circular}.

\begin{prop}\label{prop:xi}
For all $\varepsilon >0$, there exists a random number $C_\varepsilon>0$ such that for all $z \in \C$, 
\begin{equation}\label{eq:majxiinfty}
\vert \widetilde{\xi}_\infty (z) \vert \leq C_\varepsilon \mathrm{e}^{(2\pi + \varepsilon) \vert z \vert }.
\end{equation}
\end{prop}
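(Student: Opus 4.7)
The plan is to bound $|\widetilde{\xi}_\infty(z)|$ by splitting the infinite product at a threshold $j_0 = j_0(|z|)$ chosen so that the terms with index $j > j_0$ are close to $1$, while using a crude bound on the finitely many terms with $j \leq j_0$. The two key ingredients are the a.s.\ lower bound $|1-u_j| \geq C_1 j^{-\beta}$ from Lemma~\ref{lem1} (taking $\beta = 3$, say) and the a.s.\ exponential decay $y_j \leq C r^j$ coming from the assumption on $p$. A natural choice will turn out to be $j_0 \asymp \log|z|$, so that $|z| r^{j_0} = O(1)$.

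Concretely, for $j \leq j_0$ I would use the coarse bound
\[
\left| \frac{\mathrm{e}^{2i\pi z y_j} - u_j}{1-u_j} \right| \leq \frac{2\, \mathrm{e}^{2\pi |z| y_j}}{|1-u_j|},
\]
and then multiply out, obtaining
\[
\prod_{j=1}^{j_0} \left| \frac{\mathrm{e}^{2i\pi z y_j} - u_j}{1-u_j} \right| \leq \mathrm{e}^{2\pi |z| \sum_{j=1}^{j_0} y_j} \cdot \prod_{j=1}^{j_0} \frac{2}{|1-u_j|} \leq \mathrm{e}^{2\pi |z|} \cdot \left(\frac{2}{C_1}\right)^{j_0} (j_0!)^{\beta},
\]
using $\sum_j y_j \leq 1$ for the first factor and Lemma~\ref{lem1} for the second. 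With $j_0 \asymp \log|z|$, the rightmost quantity equals $\exp( O(\log|z| \cdot \log\log|z|))$, which is subexponential in $|z|$, hence bounded by $C'_\varepsilon \mathrm{e}^{\varepsilon |z|/2}$ for any $\varepsilon > 0$ and $|z|$ large enough.

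For the tail $j > j_0$, I would choose $j_0$ precisely so that $\frac{2\pi|z| y_j}{|1-u_j|} \leq \tfrac{1}{2}$, which by the estimates $y_j \leq Cr^j$ and $|1-u_j| \geq C_1 j^{-\beta}$ is ensured by $2\pi|z|\, (C/C_1)\, j^{\beta} r^{j} \leq \tfrac{1}{2}$, and this holds for all $j \geq j_0 = O(\log|z|)$. Writing each factor as $1 + g_j(z)$ with $g_j(z) = (\mathrm{e}^{2i\pi z y_j}-1)/(1-u_j)$, the inequality $|\mathrm{e}^{w}-1| \leq 2|w|$ valid for $|w| \leq 1$ together with $1+t \leq \mathrm{e}^{t}$ yields
\[
\prod_{j > j_0} (1 + |g_j(z)|) \leq \exp\!\left( \sum_{j > j_0} \frac{4\pi |z|\, y_j}{|1-u_j|} \right) \leq \exp\!\left( \frac{4\pi C}{C_1} |z| \sum_{j > j_0} j^{\beta} r^{j} \right).
\]
Because $j_0 \asymp \log|z|$, the tail sum $\sum_{j > j_0} j^{\beta} r^{j}$ is $O(j_0^{\beta} r^{j_0}) = O((\log|z|)^{\beta}/|z|)$, so the exponent is at most $O((\log|z|)^{\beta})$, again subexponential in $|z|$.

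Multiplying the two estimates, both "parasitic" factors are $\exp(o(|z|))$, and the leading $\mathrm{e}^{2\pi |z|}$ absorbs them up to $\mathrm{e}^{\varepsilon |z|}$ for $|z|$ large, giving \eqref{eq:majxiinfty} with a random constant $C_\varepsilon$ depending on the realisations of $C, C_1, r$. The main delicate point is the calibration of $j_0$: it must be large enough that the tail product is tame, but small enough that the prefactor $(2/C_1)^{j_0}(j_0!)^{\beta}$ remains subexponential; the logarithmic choice $j_0 \asymp \log |z|$ is exactly the balance point where both error terms are $\exp(o(|z|))$, so that $2\pi$ emerges as the sharp exponential type.
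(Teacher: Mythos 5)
Your proposal is correct and follows essentially the same route as the paper: split the product at a threshold of order $\log|z|$, bound the finitely many head factors by $2\mathrm{e}^{2\pi|z|y_j}/|1-u_j|$ together with Lemma~\ref{lem1}, giving $\mathrm{e}^{2\pi|z|}$ times a factor $\exp(O(\log|z|\,\log\log|z|))$, and show the tail is negligible. The only (cosmetic) difference is in the tail: the paper cuts at $2\pi|z|y_j\ge\eta$ and invokes the a.s.\ convergence of $\sum_j y_j/|1-u_j|$ to get a bound $\mathrm{e}^{\varepsilon|z|/2}$, whereas you calibrate the cut with the explicit decay $y_j\le Cr^j$ so the tail is $\mathrm{e}^{o(|z|)}$; both rest on the same two lemmas.
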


\begin{proof}
The bound is clear for $\vert z \vert < 1$ since $\widetilde{\xi}_\infty$ is locally bounded. \\
Assume $\vert z \vert\geq 1$. Let $\eta>0$. Set $k= \max \{j\geq 1 \ : \ 2\pi \vert z \vert y_j \geq \eta \} \vee 0$ and $k_1=  \max \{j\geq 1 \ : \ 2\pi  y_j \geq \eta \}\vee 0$. Note that $k\geq k_1$, with $k_1$ independent of $z$. We distinguish between two regimes depending on whether $j$ is lower or greater than $k$.
\begin{itemize}
\item \underline{$j > k$}:
	\begin{align*}
	\prod_{j=k+1}^{+\infty} \left\vert \frac{\mathrm{e}^{2 i\pi z y_j} - u_j}{1-u_j} \right\vert &\leq 				\prod_{j=k+1}^{+\infty} \left( 1 + \frac{2\pi \vert z \vert y_j}{\vert 1-u_j \vert} \exp (2\pi \vert z \vert y_j  ) \right) \\
		&\leq \prod_{j=k+1}^{+\infty} \left( 1 + \frac{2\pi \vert z \vert y_j}{\vert 1-u_j \vert} \exp (\eta  ) \right) \\
		&\leq \exp \left( 2\pi \vert z \vert \exp( \eta ) \sum_{j=k+1}^{+\infty}  \frac{y_j}{\vert 1-u_j \vert} \right) \\
		&\leq \exp \left( 2\pi \vert z \vert \exp( \eta ) \sum_{j=k_1+1}^{+\infty}  \frac{y_j}{\vert 1-u_j \vert} \right).
	\end{align*}
	Moreover we have seen from \eqref{eq:yjuj} that $\sum\limits_{j=1}^{+\infty}  \frac{y_j}{\vert 1-u_j \vert} < \infty$, and furthermore if $\eta$ tends to zero then $k_1$ goes to infinity, so we can chose $\eta$ sufficiently close to $0$ such that 
	$2\pi \exp (\eta ) \sum\limits_{j=k_1+1}^{+\infty}  \frac{y_j}{\vert 1-u_j \vert} \leq \frac{\varepsilon}{2}$ and then 
	\begin{equation}\label{eq:majxiinfty1}
	\prod_{j=k+1}^{+\infty} \left\vert \frac{\mathrm{e}^{2i\pi z y_j} - u_j}{1-u_j} \right\vert \leq \exp \left( \frac{\varepsilon}{2} \vert z \vert \right).
	\end{equation}
\item \underline{$j \leq k$} (case to be considered only when $k\neq 0$):
	As for all $m\in \N^*$,
	\[\left\vert \frac{\mathrm{e}^{2i\pi z y_m} - u_m}{1-u_m} \right\vert \leq \frac{\mathrm{e}^{2\pi \vert z \vert y_m} +1}{\vert 1 - u_m \vert } \leq \frac{2\mathrm{e}^{2\pi \vert z \vert y_m}}{\vert 1 - u_m \vert }, \]
	then
	\begin{align}\label{eq:majxiinfty2}
	\begin{split}
	\prod_{j=1}^k \left\vert \frac{\mathrm{e}^{2i \pi z y_j} - u_j}{1-u_j} \right\vert &\leq  \left( \frac{2}{\min\limits_{1\leq j \leq k}  \vert 1 -u_j \vert  }     \right)^k  \exp \left( 2\pi \vert z \vert \sum_{j=1}^k y_j   \right) \\
		&\leq \left( \frac{2}{\min\limits_{1\leq j \leq k}  \vert 1 -u_j \vert  }     \right)^k  \mathrm{e}^{2\pi \vert z \vert}. 
	\end{split}
	\end{align}
	It just remains to show that
	\begin{equation}\label{eq:majxiinfty3}
	\left( \frac{2}{\min\limits_{1\leq j \leq k}  \vert 1 -u_j \vert  } \right)^k \leq C_\varepsilon \exp \left( \frac{\varepsilon}{2} \vert z \vert \right)
	\end{equation}
	where $C_\varepsilon$ is a random number depending on $\varepsilon$ but not on $z$. Lemma~\ref{lem1} with $\alpha=3$ gives 
	\[\left( \frac{2}{\min\limits_{1\leq j \leq k}  \vert 1 -u_j \vert  } \right)^k \leq \left( \frac{2}{C_1} k^{3} \right)^k.\] 
	Moreover, using the assumption on the sequence $(y_j)$, there exists $r \in (0,1)$ and a (random) number $C>0$ such that for all $j$, $y_j \leq C r^j$, thus 
	\[k\leq \max \left\{ j\geq 1 : \ C r^j \geq \frac{\eta}{2\pi \vert z \vert} \right\} \leq \frac{\log \left(\frac{2\pi C \vert z \vert}{\eta} \right)}{- \log r} \ll \log (\vert z \vert +1 )\]
	Hence 
	\[ \log \left(\left( \frac{2}{C_1} k^{3} \right)^k \right) \ll \log (\vert z \vert +1) ( 1+ \log \log (\vert z \vert +1) ) \underset{\vert z \vert \to \infty}{=} o(\vert z \vert),\]
	which gives the existence of $C_\varepsilon$. \\
	Consequently, \eqref{eq:majxiinfty1}, \eqref{eq:majxiinfty2} and \eqref{eq:majxiinfty3} jointly give \eqref{eq:majxiinfty}.	
\end{itemize}	
\end{proof}

Up to $\varepsilon$, the bound we provide in Proposition~\ref{prop:xi} is sharp. Indeed, we have the following result:

\begin{prop}
For all $\varepsilon \in (0, 2\pi)$, there exists a random number $c_\varepsilon>0$ such that for all $x \geq 0$, 
\begin{equation}
\vert \widetilde{\xi}_\infty (-ix) \vert \geq c_\varepsilon \mathrm{e}^{(2\pi - \varepsilon) x }.
\end{equation}
\end{prop}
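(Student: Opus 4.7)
The plan is to exploit that on the negative imaginary axis the factors $e^{2i\pi z y_j}$ become real positive numbers growing in modulus, and to factor out the dominant growth. Since $y \in \nabla^\prime$ gives $\sum_{j\geq 1} y_j = 1$, one can rewrite
\[\widetilde{\xi}_\infty(-ix) \;=\; \prod_{j\geq 1} \frac{\mathrm{e}^{2\pi x y_j} - u_j}{1 - u_j} \;=\; \mathrm{e}^{2\pi x}\, \prod_{j\geq 1} \frac{1 - u_j \mathrm{e}^{-2\pi x y_j}}{1 - u_j},\]
so the task reduces to bounding the remaining infinite product below by something that costs only $\mathrm{e}^{-\varepsilon x}$.

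The crucial ingredient is the sharp inequality
\[|1 - u a| \;\geq\; \frac{1+a}{2}\,|1 - u| \qquad (|u|=1,\ a \in [0,1]),\]
which I would derive from the identity
\[\frac{|1-ua|^2}{|1-u|^2} \;=\; a + \frac{(1-a)^2}{|1-u|^2},\]
using $|1-u|^2 \leq 4$ to minorize the second term by $(1-a)^2/4$ and recognizing $a + (1-a)^2/4 = \bigl((1+a)/2\bigr)^2$. Equality is attained at $u = -1$, which shows this constant cannot be improved. Note that the weaker (and more immediate) bound $|1 - ua| \geq \sqrt{a}\,|1-u|$ would produce only $|\widetilde{\xi}_\infty(-ix)| \geq \mathrm{e}^{\pi x}$ after passing to the infinite product, which is off by a factor $2$ in the exponent. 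Applying the sharp inequality with $a = \mathrm{e}^{-2\pi x y_j}$ yields
\[|\widetilde{\xi}_\infty(-ix)| \;\geq\; \mathrm{e}^{2\pi x}\, \prod_{j\geq 1} \frac{1 + \mathrm{e}^{-2\pi x y_j}}{2}.\]

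It remains to show that this product is only polynomially small in $x$. Using $(1+\mathrm{e}^{-t})/2 = \mathrm{e}^{-t/2}\cosh(t/2)$, one has $\log \tfrac{1+\mathrm{e}^{-t}}{2} \geq \max(-t/2,\, -\log 2)$. Exploiting the exponential decay $y_j \leq Cr^j$ (with random $C > 0$, $r \in (0,1)$), I would pick the threshold $k_1(x) := \lceil \log(C\pi x /\log 2)/\log(1/r) \rceil = O(\log x)$. For indices $j \leq k_1(x)$, each term contributes at least $-\log 2$, giving an $O(\log x)$ contribution in total. For $j > k_1(x)$, the definition of $k_1(x)$ forces $y_j \leq \log 2/(\pi x)$, so the bound $-t/2 = -\pi x y_j$ applies, and the geometric tail estimate $\sum_{j>k_1(x)} y_j \leq Cr^{k_1(x)+1}/(1-r) \leq r\log 2/(\pi x (1-r))$ bounds this contribution by $O(1)$. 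Combining, $\prod_{j} \tfrac{1 + \mathrm{e}^{-2\pi x y_j}}{2} \geq c \, x^{-D}$ for random $c, D > 0$, whence $|\widetilde{\xi}_\infty(-ix)| \geq c \, x^{-D} \mathrm{e}^{2\pi x} \geq c_\varepsilon \mathrm{e}^{(2\pi - \varepsilon)x}$ for every $\varepsilon > 0$ and $x \geq 1$. The case $x \in [0,1]$ is then a matter of continuity, since $\widetilde{\xi}_\infty(-ix)$ is entire (Theorem~\ref{thm:convergence}(i)) and does not vanish there (each factor is non-zero since $\mathrm{e}^{2\pi x y_j} > 1 = |u_j|$, and the product converges absolutely).

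The main obstacle is obtaining the sharp constant $2\pi$ in the exponent: without the $\tfrac{1+a}{2}$ inequality (tight at $u = -1$) one only recovers $\pi x$ rather than $(2\pi-\varepsilon)x$. Once this elementary but non-obvious bound is in place, everything else reduces to bookkeeping with the exponential decay of $(y_j)$.
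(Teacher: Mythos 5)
Your proof is correct, but it takes a genuinely different route from the paper's. The paper fixes $\varepsilon$, chooses the finite random index $p=\min\{k\geq 1:\ \sum_{j\leq k}y_j\geq 1-\varepsilon/(2\pi)\}$ (which exists because $\sum_j y_j=1$ on $\nabla^\prime$), bounds each of the first $p$ factors below by $\frac14 \mathrm{e}^{2\pi x y_j}$ by a crude case distinction on whether $2\pi x y_j\geq 1$, and simply discards all remaining factors after observing that $\bigl\vert \frac{\mathrm{e}^{2\pi x y_j}-u_j}{1-u_j}\bigr\vert\geq 1$ for all $x\geq 0$; this gives $c_\varepsilon=(1/4)^p$ directly for every $x\geq 0$ and uses nothing about $(y_j)$ beyond $\sum_j y_j=1$. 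You instead factor out $\mathrm{e}^{2\pi x}$ exactly, establish the sharp inequality $\vert 1-ua\vert\geq\frac{1+a}{2}\vert 1-u\vert$ (your identity $\frac{\vert 1-ua\vert^2}{\vert 1-u\vert^2}=a+\frac{(1-a)^2}{\vert 1-u\vert^2}$ checks out), and invoke the exponential decay $y_j\leq Cr^j$ to show that the correction $\prod_j\frac{1+\mathrm{e}^{-2\pi x y_j}}{2}$ is only polynomially small; the split at $k_1(x)=O(\log x)$ and the geometric tail estimate are sound (the degenerate case where $C\pi x<\log 2$, so that $k_1(x)\leq 0$, is harmless since then $\pi x\sum_j y_j\leq \log 2/C=O(1)$), and the treatment of $x\in[0,1]$ by continuity, compactness and a.s.\ non-vanishing (each $u_j\neq 1$ a.s., absolutely convergent product) is legitimate. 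The trade-off: your argument yields the strictly stronger bound $\vert\widetilde{\xi}_\infty(-ix)\vert\geq c\,x^{-D}\mathrm{e}^{2\pi x}$, so one constant serves every $\varepsilon$ simultaneously, but it uses the exponential-decay hypothesis and needs the separate compact-interval step; the paper's argument is shorter, valid for any distribution on $\nabla^\prime$ without decay, and uniform in $x\geq 0$ at once, but its constant $(1/4)^p$ degrades as $\varepsilon\to 0$ and it does not capture the polynomially small correction.
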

\begin{proof}
Let $x\geq 0$. For all $k\in \N^*$,
if $x \geq \frac{1}{2\pi y_k}$ then  
\[\left\vert \frac{\mathrm{e}^{2\pi x y_k} - u_k}{1-u_k}  \right\vert \geq 1 \geq \frac{1}{2} \vert \mathrm{e}^{2\pi x y_k} - u_k \vert \geq \frac{1}{2} ( \mathrm{e}^{2\pi x y_k} - 1)  \geq \frac{1}{4} \mathrm{e}^{2\pi x y_k}  ,\]
and if $0\leq x \leq \frac{1}{2\pi y_k}$ then 
\[ \left\vert \frac{\mathrm{e}^{2\pi x y_k} - u_k}{1-u_k}  \right\vert \geq  1 \geq \frac{1}{e} \mathrm{e}^{2\pi x y_k} \geq  \frac{1}{4} \mathrm{e}^{2\pi x y_k}. \]
Let $\varepsilon \in (0,2\pi)$. Set $p=\min \{ k\geq 1 \ : \  \sum\limits_{j=1}^{k} y_j \geq 1 - \frac{\varepsilon}{2\pi} \}$. Then
\begin{align*}
\vert \widetilde{\xi}_\infty (-ix) \vert &= \prod_{j=1}^{\infty} \left\vert \frac{\mathrm{e}^{2\pi x y_j} - u_j}{1-u_j}    \right\vert \\
	&\geq \prod_{j=1}^p \left\vert \frac{\mathrm{e}^{2\pi x y_j} - u_j}{1-u_j} \right\vert \\
	&\geq \left( \frac{1}{4} \right)^p \exp \left( 2\pi x \sum_{j=1}^p y_j \right) \\
	&\geq \left( \frac{1}{4} \right)^p \mathrm{e}^{(2\pi - \varepsilon) x }.
\end{align*}
\end{proof}

\section{More general central measures}
\label{sec:moregeneral}

In this section we treat the case of central measures such that $\PP \left( \sum_{j\geq 1} y_j <1 \right) > 0$.

Let $\lambda = (\lambda_j)_{j\geq 1}$ be a sequence of decreasing non-negative real numbers summing to $\lambda_0 \in (0,1)$, and let $E_\lambda = \bigsqcup\limits_{j=1}^\infty \mathcal{C}_j \sqcup S$ be the disjoint union of circles $\mathcal{C}_j$, where for all $j$, $\mathcal{C}_j$ has perimeter $\lambda_j$, and of a segment $S$ of length $1-\lambda_0$. Let $x=(x_k)_{k\geq 1} \in (\E_\lambda )^{\infty}$. As in Section~\ref{sec:gen}, for all $n$, one can define a permutation $\sigma_n (\lambda, x)$ as follows: for all $k \in \{ 1, \dots , n\}$, 
\begin{itemize}
	\item if $x_k$ is on a circle $\mathcal{C}_j$, then the image of $k$ by $\sigma_n (\lambda , x)$ is the index of the first point in $\mathcal{C}_j$ and $\{x_1, \dots , x_n\}$ we encounter after $x_k$ following $\mathcal{C}_j$ counterclockwise,
	\item if $x_k$ is in $S$, then $k$ is a fixed point of $\sigma_n (\lambda , x)$.
\end{itemize}
To illustrate, if the $\lambda_j$ equal $3^{-j}$ and if the six first $x_k$ are distributed on $E_\lambda$ as shown
\begin{center}
\begin{tikzpicture}
\draw (0,0) circle (0.9) ;
\draw (2,0) circle (0.3) ;
\draw (4,0) circle (0.1) ;
\draw (6,0) circle (0.033) ;
\draw (7,0) node {$\cdots$} ;
\draw (-6,-0.5)--(-2,-0.5) ;
\draw (0,-1) node[below] {$\mathcal{C}_1$} ;
\draw (2,-1) node[below] {$\mathcal{C}_2$} ;
\draw (4,-1) node[below] {$\mathcal{C}_3$} ;
\draw (6,-1) node[below] {$\mathcal{C}_4$} ;
\draw (7,-1) node[below] {$\cdots$} ;
\draw (-4,-1) node[below] {$S$} ;
\draw (20:0.9) node {$\bullet$} node[above right] {$x_1$};
\draw (175:0.9) node {$\bullet$} node[left] {$x_5$};
\draw (-70:0.9) node {$\bullet$} node[above left] {$x_6$};
\draw (55:0.1)+(4,0) node {$\bullet$} node[above right] {$x_4$};
\draw (-3.5,-0.5) node {$\bullet$} node[above] {$x_3$};
\draw (-2.2,-0.5) node {$\bullet$} node[above] {$x_2$};
\end{tikzpicture}
\end{center}
then, 
\begin{align*}
\sigma_1 (\lambda , x) & = (1) \\
\sigma_2 (\lambda , x) & = (1)(2) \\
\sigma_3 (\lambda , x) & = (1)(2)(3) \\
\sigma_4 (\lambda , x) & = (1)(2)(3)(4) \\
\sigma_5 (\lambda , x) & = (1 \ 5)(2)(3)(4) \\
\sigma_6 (\lambda , x) & = (1 \ 5 \ 6)(2)(3)(4).
\end{align*}
Naturally, in the same way as for Proposition~\ref{prop:couplage}, we have the following proposition.
\begin{prop}
The sequence $\sigma_\infty (\lambda, x)=(\sigma_n (\lambda, x))_{n\geq 1}$ is a virtual permutation. Moreover, if $\lambda$ follows any arbitrary distribution $p$ on $\nabla$, and if conditionally on $\lambda$ the points $x_k$ are i.i.d following the uniform distribution on $E_\lambda $, then $\sigma_\infty (\lambda, x)$ follows the central measure on $\mathfrak{S}$ corresponding to $p$.
\end{prop}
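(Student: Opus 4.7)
The plan is to follow the structure of Proposition~\ref{prop:couplage} almost verbatim, treating points that land in the segment $S$ as producing fixed points of the resulting permutations, which slots into the same geometric and combinatorial arguments as the circle case.

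First, I would verify the virtual permutation property by distinguishing two cases according to whether $x_{n+1}$ lies on a circle $\mathcal{C}_j$ or in $S$. In the first case, the argument used in Proposition~\ref{prop:couplage} transfers without change, since the construction on the remaining circles is unaffected and the modification on $\mathcal{C}_j$ is exactly the same. In the second case, $n+1$ is by definition a fixed point of $\sigma_{n+1}(\lambda, x)$, and the construction of $\sigma_n(\lambda, x)$ uses only $x_1, \dots, x_n$, so removing $n+1$ from the cycle decomposition of $\sigma_{n+1}(\lambda, x)$ gives $\sigma_n(\lambda, x)$.

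Second, I would prove that each $\sigma_n(\lambda, x)$ has a central distribution. Conditionally on $\lambda$, the points $x_k$ are i.i.d.\ on $E_\lambda$, hence exchangeable. For any $\pi \in \mathfrak{S}_n$, the construction gives $\sigma_n(\lambda, x_{\pi(\cdot)}) = \pi^{-1} \sigma_n(\lambda, x) \pi$, and exchangeability of the $x_k$ then yields that $\sigma_n$ and $\pi \sigma_n \pi^{-1}$ have the same law. This shows that each $\sigma_n$ is central, so the law of $\sigma_\infty(\lambda, x)$ on $\mathfrak{S}$ is central.

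Third, I would identify this central measure via Olshanski's one-to-one correspondence. Let $\ell_{n,j} := \#\{k \leq n : x_k \in \mathcal{C}_j\}$; whenever positive, this is exactly the length of the unique cycle of $\sigma_n$ supported on $\{k : x_k \in \mathcal{C}_j\}$, while the remaining $n - \sum_j \ell_{n,j}$ indices contribute 1-cycles coming from $S$. Conditionally on $\lambda$, $\ell_{n,j}$ is binomial$(n, \lambda_j)$, so by the strong law of large numbers $\ell_{n,j}/n \to \lambda_j$ almost surely for every fixed $j$. Since $\lambda$ has non-increasing coordinates, the sorted sequence of normalized cycle lengths of $\sigma_n$ converges a.s.\ coordinate-wise to $\lambda$; the 1-cycles produced by $S$ contribute only normalized lengths of order $1/n$ and hence do not affect any fixed coordinate of the sorted list. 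This identifies the corresponding distribution on $\nabla$ as $p$.

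The main subtlety I foresee is the identification step: one must be careful that the convergence of normalized sorted cycle lengths is taken in the right topology on $\nabla$ for Olshanski's correspondence to apply, and that the contribution of the segment $S$ — which can now carry a macroscopic fraction of the $x_k$'s when $\sum_j \lambda_j < 1$ — is correctly accounted for as a mass of 1-cycles, rather than contaminating the top of the sorted spectrum.
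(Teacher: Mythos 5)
Your proposal is correct and takes essentially the same route as the paper, which states this proposition without proof as being obtained \emph{in the same way} as Proposition~\ref{prop:couplage}: segment points become fixed points, centrality follows from exchangeability of the $x_k$ together with the conjugation identity $\sigma_n(\lambda,x_{\pi(\cdot)})=\pi^{-1}\sigma_n(\lambda,x)\pi$, and the limiting measure is identified through the almost sure convergence of the normalized cycle frequencies $\ell_{n,j}/n\to\lambda_j$. The one point you flag but do not fully close --- that the ranked normalized cycle lengths converge to $\lambda$ despite the macroscopically many $1$-cycles coming from $S$ --- is settled by the mass identity $\sum_{m>M}\ell_{n,m}/n = 1-p_n/n-\sum_{m\le M}\ell_{n,m}/n \to \sum_{m>M}\lambda_m$, which prevents any high-index cycle from remaining macroscopic and justifies the coordinate-wise convergence of the sorted sequence.
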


Let $p$ be any probability measure on $\nabla$, and let $(y_j)_{j\geq 1}$ be a random vector following the distribution $p$. Introduce the random variable $y_0:=\sum\limits_{j=1}^{+\infty} y_j$.

For all $n$ and $j$, denote $\ell_{n,j}:= \# \{ k \in \{1 , \dots n \} \ : \ x_k \in \mathcal{C}_j \}$, and $p_n:= \# \{ k \in \{1 , \dots n \} \ : \ x_k \in S \}$.

Let $\alpha$ be an irrational number between $0$ and $1$. The expressions of $\widetilde{\xi}_n$ and $\xi_{n,\alpha}$ (see \eqref{eq:xi1} and \eqref{eq:xialpha1}) become 
\begin{equation}
\widetilde{\xi}_n (z) = \left(\prod_{\substack{j\geq 1 \\ \ell_{n,j}>0}} \frac{\mathrm{e}^{2i\pi z y_j^{(n)}} - u_j}{1-u_j}\right) \left(\prod_{k=1}^{p_n} \frac{\mathrm{e}^{2i\pi \frac{z}{n}} - v_k}{1-v_k} \right)
\end{equation}
where the $u_j$ and the $v_k$ are independent random variables uniformly distributed on the unit circle,
and
\begin{equation}
\xi_{n,\alpha } = \left( \prod_{\substack{j\geq 1 \\ \ell_{n,j}>0}} \frac{\mathrm{e}^{2i\pi \left(\frac{z}{n} + \alpha \right)\ell_{n,j}} -1}{\mathrm{e}^{2i\pi \alpha \ell_{n,j}} -1} \right) \left(\frac{\mathrm{e}^{2i\pi \left( \frac{z}{n} + \alpha \right)} - 1}{\mathrm{e}^{2i\pi \alpha}- 1} \right)^{p_n} .
\end{equation}

\begin{theoreme}\label{thm:convergence2}
Assume that $\sigma$ is generated by the coupling described above for a distribution with exponential decay $p$ on $\nabla$. Then we have the following convergences in distribution:
\begin{enumerate}[label=(\roman*)]
	\item 
\[\widetilde{\xi}_n (z) \underset{n\to\infty}{\Longrightarrow} \left(\prod_{j=1}^{+\infty} \frac{\mathrm{e}^{2i\pi z y_j} - u_j}{1-u_j}\right) \mathrm{e}^{i\pi z (1-y_0)} \prod_{k\in \Z} \left( 1 - \frac{z}{w_k} \right)\]
where $\{w_k : k\in \Z\}$ are points of a Poisson process with intensity $1-y_0$ on $\R$ (if $y_0=1$, we make the convention $ \prod_{k\in \Z} \left( 1 - \frac{z}{w_k} \right) =1$). 
	\item For all irrational number $\alpha$ of finite type,
\[\xi_{n,\alpha} (z) \underset{n\to\infty}{\Longrightarrow} \left(\prod_{j=1}^{+\infty} \frac{\mathrm{e}^{2i\pi z y_j} - u_j}{1-u_j}\right) \mathrm{e}^{i\pi z (1-y_0)\left(1- \frac{i}{\tan (\pi \alpha)} \right)}.\]
\end{enumerate}
\end{theoreme}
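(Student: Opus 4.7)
The plan is to factorise both characteristic polynomial ratios according to the type of contribution, writing
\[\widetilde{\xi}_n = \widetilde{A}_n \cdot \widetilde{B}_n, \qquad \xi_{n,\alpha} = A_n \cdot B_n,\]
where the $A$-factors gather the products indexed by $j$ (circle-cycles) and the $B$-factors gather the products indexed by $k$ (the $p_n$ fixed points coming from $S$). The circle factors $\widetilde{A}_n$ and $A_n$ are handled exactly as in Section~\ref{sec:quotient}: the arguments there only rely on the exponential decay $y_j \leq C r^j$, on the convergence $y_j^{(n)}\to y_j$ by the strong law of large numbers, and on the equidistribution lemma for $(\{\alpha \ell_{n,j}\})_{j\leq k}$ via multinomial Fourier transforms. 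None of these requires $\sum_j y_j = 1$; the residual weight $1 - \sum_{j\leq k} y_j$ in the Fourier identity simply absorbs the extra mass $1-y_0$ without changing the strict-contraction argument. Consequently $\widetilde{A}_n$ converges almost surely uniformly on compact sets to $\prod_{j\geq 1} \tfrac{\mathrm{e}^{2i\pi z y_j} - u_j}{1-u_j}$, and similarly $A_n \Longrightarrow \prod_{j\geq 1} \tfrac{\mathrm{e}^{2i\pi z y_j} - u_j}{1-u_j}$ when $\alpha$ has finite type.

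For part (ii), the segment factor is entirely explicit. Using the elementary identity $\tfrac{\mathrm{e}^{2i\pi\alpha}}{\mathrm{e}^{2i\pi\alpha}-1} = \tfrac{1}{2}\bigl(1-\tfrac{i}{\tan(\pi\alpha)}\bigr)$, a direct Taylor expansion yields
\[\frac{\mathrm{e}^{2i\pi(z/n + \alpha)} - 1}{\mathrm{e}^{2i\pi\alpha} - 1} = 1 + \frac{i\pi z}{n}\Bigl(1 - \frac{i}{\tan(\pi\alpha)}\Bigr) + O(1/n^2)\]
uniformly on compact sets of $z$. Raising this to the power $p_n$ and using the a.s.\ limit $p_n/n \to 1 - y_0$ (strong law of large numbers, since conditionally on $y$ the variable $p_n$ is binomial$(n, 1-y_0)$) produces the announced prefactor $\exp\bigl(i\pi z (1-y_0)(1 - i/\tan(\pi\alpha))\bigr)$, uniformly on compact sets. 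Multiplying by the limit of $A_n$ concludes part (ii).

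For part (i), the segment factor $\widetilde{B}_n$ is the substantive step. Writing $v_k = \mathrm{e}^{2i\pi\theta_k}$ with $\theta_k$ i.i.d.\ uniform on $(-\tfrac12, \tfrac12]$ and setting $w_k^{(n)} := n\theta_k$, the point process $\sum_{k\leq p_n}\delta_{w_k^{(n)}}$ is a binomial process on $(-n/2, n/2]$ that converges in distribution on compact subsets of $\mathbb{R}$ to a Poisson point process of intensity $1-y_0$. The elementary rewriting
\[\frac{\mathrm{e}^{2i\pi z/n} - \mathrm{e}^{2i\pi w/n}}{1 - \mathrm{e}^{2i\pi w/n}} = -\frac{\sin(\pi(z-w)/n)}{\sin(\pi w/n)}\,\mathrm{e}^{i\pi z/n}\]
splits each factor into a ``ratio'' part, which for bounded $w_k^{(n)}$ converges to $1 - z/w_k$ by the Poisson convergence and the continuous mapping theorem, together with a phase $\mathrm{e}^{i\pi z/n}$ whose $p_n$-fold product yields exactly the prefactor $\mathrm{e}^{i\pi z (1-y_0)}$.

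The main obstacle is the tail: to extend the convergence of truncated products to the full product, one must show that
\[\prod_{|w_k^{(n)}|>T}\left(-\frac{\sin(\pi(z-w_k^{(n)})/n)}{\sin(\pi w_k^{(n)}/n)}\right)\]
is close to $1$, uniformly in $n$ and in $z$ on compact sets, with high probability as $T\to\infty$. I would follow the microscopic CUE analysis of \cite{chhaibi2017circular}: pair-cancellations between nearly antipodal points, which hold in expectation by the symmetry of the Poisson process on $\mathbb{R}$, combined with second-moment estimates yield the required tightness and absolute convergence (in the principal-value sense) of the limiting Hadamard product $\prod_{k\in\Z}(1 - z/w_k)$. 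Combining this tail control, the Poisson convergence for the truncation, and the convergence of $\widetilde{A}_n$ completes the proof of part (i).
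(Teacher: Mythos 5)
Your plan follows essentially the same route as the paper: the circle factors are handled exactly as in Section~\ref{sec:quotient} (those arguments indeed survive $\sum_j y_j<1$, since $y_j\leq 1/2$ for $j\geq 2$ still holds), part (ii) is the same Taylor expansion of the segment factor with $p_n/n\to 1-y_0$, and part (i) rests, as in the paper, on convergence of the rescaled fixed-point angles to a Poisson process of intensity $1-y_0$ together with a truncation-plus-tail argument — the paper implements the tail step by periodizing the points $n(\Phi_k-j)$ through the sine product formula and bounding $\E\left\vert \sum_{\vert w_{n,k}\vert \geq A} 1/w_{n,k} \right\vert$ by $\pi/\sqrt{A}$ via a binomial variance computation, which is the rigorous form of the cancellation-plus-second-moment estimate you sketch. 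The only precision to add is that the symmetry you invoke must be that of the uniform angles at finite $n$ (so that the centering and the $O(1/T)$ variance bound are uniform in $n$), not merely the symmetry of the limiting Poisson process, and the resulting limit product $\prod_{k\in\Z}(1-z/w_k)$ converges only in the principal-value sense, as the paper notes via Lemma~\ref{lem:approxPoisson}.
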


\begin{remarque}
In Theorem~\ref{thm:convergence2}, the product $\prod\limits_{k\in \Z} \left( 1 - \frac{z}{w_k} \right)$ is not absolutely convergent. It has to be understood as 
\[ \left( 1 - \frac{z}{w_0} \right)\prod\limits_{k\geq 1} \left( 1 - \frac{z}{w_k} \right) \left( 1 - \frac{z}{w_{-k}} \right),\]
where the points of the Poisson process $\{w_k : k\in \Z\}$ are labelled as follows:
\[\dots < w_{-2} < w_{-1} < 0 \leq w_0 < w_1 < w_2 < \dots .\] 
The fact that this product with this ordering is convergent is a direct consequence of Lemma~\ref{lem:approxPoisson} with $\varepsilon < 1/2$.
\end{remarque}

\subsection{Proof of Theorem~\ref{thm:convergence2} (i)}

For all $k\geq 1$, write $v_k = \mathrm{e}^{2i\pi \Phi_k}$ where the $\Phi_k$ are independent and uniformly distributed on $[0,1)$. Then
\begin{align*}
\prod_{k=1}^{p_n} \frac{\mathrm{e}^{2i\pi \frac{z}{n}} - v_k}{1-v_k} &= \prod_{k=1}^{p_n} \frac{\mathrm{e}^{2i\pi \frac{z}{n}} - \mathrm{e}^{2i\pi \Phi_k}}{1-\mathrm{e}^{2i\pi \Phi_k}} \\
	&= \mathrm{e}^{i\pi z\frac{p_n}{n}} \prod_{k=1}^{p_n} \frac{\sin \left( \pi \left( \Phi_k - \frac{z}{n} \right)\right)}{\sin (\pi \Phi_k )} \\
	&= \mathrm{e}^{i\pi z\frac{p_n}{n}} \prod_{k=1}^{p_n} \frac{\Phi_k - \frac{z}{n}}{\Phi_k} \lim_{B\to +\infty} \prod_{0<\vert j \vert \leq B } \frac{1- \frac{\Phi_k - \frac{z}{n}}{j}}{1- \frac{\Phi_k}{j}} \\
	&=  \mathrm{e}^{i\pi z\frac{p_n}{n}} \prod_{k=1}^{p_n} \prod_{j\in \Z} \left( 1 - \frac{z}{n(\Phi_k -j)} \right)
\end{align*}
using the product expansion of the sine function for the penultimate equality, that is
\[\sin (\pi z) = \pi z \prod_{j=1}^{+\infty} \left( 1- \frac{z^2}{j^2} \right) = \pi z \lim_{B\to +\infty} \prod_{0<\vert j \vert \leq B } \left( 1- \frac{z}{j} \right). \]
\begin{remarque}
The product $\prod\limits_{j\in \Z} \left( 1 - \frac{z}{n(\Phi_k -j)} \right)$ is not absolutely convergent. We write it like this for convenience of notation, and has to be understood as $\lim\limits_{B\to +\infty} \prod\limits_{0\leq \vert j \vert \leq B } \left( 1 - \frac{z}{n(\Phi_k -j)} \right)$.
\end{remarque}

Denote by $w_{n,\ell}$ the points $n(\Phi_k -j)$ for $k \in [\![1, p_n ]\!]$ and $j\in \Z$ (defined for example as follows: 
for all $\ell \in \Z$, $w_{n,\ell}:=n(\Phi_k -j)$ where $(k,j)$ is the unique couple of $[\![1, p_n ]\!] \times \Z$ such that $\ell=k+jp_n$).
The order of labelling for the points $w_{n,\ell}$ does not matter for what we use in the sequel. 

Let $\mu_n := \sum\limits_{k\in \Z} \delta_{w_{n,k}}$ be the empirical measure associated with the point process of the $w_{n,k}$. 

\begin{lem}\label{lem:topoisson}
The empirical measure $\mu_n$ converges vaguely to $\mu_\infty$, where $\mu_\infty$ is the empirical measure associated with the points of a Poisson process with intensity $1-y_0$ on $\R$. The convergence holds for all compactly supported test functions from $\R$ to $\C$ (measurable but not necessarily continuous).
\end{lem}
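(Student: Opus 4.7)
The plan is to reduce the analysis to a compact interval $[-A, A]$ containing the support of the test function, and to exploit the fact that the points $w_{n,\ell}$ that fall in $[-A, A]$ can be described very explicitly in terms of the uniform variables $\Phi_1, \ldots, \Phi_{p_n}$.

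First, I would observe that for $n > 2A$ and each $k\in \{1, \ldots, p_n\}$, at most one $j \in \Z$ can yield $n(\Phi_k - j) \in [-A, A]$: either $j=0$ (when $\Phi_k \in [0, A/n)$, giving a point in $[0, A)$) or $j=1$ (when $\Phi_k \in (1 - A/n, 1)$, giving a point in $[-A, 0)$). Consequently, conditional on $p_n$, the count $N_n := \mu_n([-A, A])$ follows the binomial distribution $\mathrm{Bin}(p_n, 2A/n)$, and conditionally on $N_n = m$, the $m$ points are i.i.d.\ uniform on $[-A, A]$ (since $\Phi_k$ conditioned to $[0, A/n) \cup (1-A/n, 1)$ is uniform on that set, and the two affine maps $t \mapsto nt$ and $t \mapsto n(t-1)$ carry each half uniformly onto $[0, A)$ and $[-A, 0)$ respectively).

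Next, since conditionally on $y$ each $x_k$ lies in $S$ independently with probability $1 - y_0$, the strong law of large numbers yields $p_n/n \to 1 - y_0$ almost surely. The classical Poisson approximation for binomials then gives, conditionally on $y$,
\[
N_n \underset{n\to\infty}{\overset{d}{\longrightarrow}} \mathrm{Poisson}(2A(1 - y_0)),
\]
with the convention that the limit is identically zero in the degenerate case $y_0 = 1$ (in which case $p_n = 0$ almost surely as well).

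Since the restriction of a Poisson point process of intensity $1 - y_0$ on $\R$ to $[-A, A]$ is likewise characterized by a $\mathrm{Poisson}(2A(1-y_0))$-distributed number of points which, conditionally on that number, are i.i.d.\ uniform on $[-A, A]$, this identifies $\mu_n|_{[-A, A]}$ and $\mu_\infty|_{[-A, A]}$ in the limit as random point measures. For any bounded measurable $f$ supported in $[-A, A]$, one has $\int f\, d\mu_n = \sum_{i=1}^{N_n} f(X_i^{(n)})$ where the $X_i^{(n)}$ are i.i.d.\ uniform on $[-A, A]$ (with distribution not depending on $n$), and this is a measurable functional of $(N_n, (X_i^{(n)})_i)$ whose joint distribution converges, so $\int f\, d\mu_n$ converges in distribution to $\int f\, d\mu_\infty$. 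The only point requiring care is the explicit \emph{at most one contribution per} $\Phi_k$ structure valid once $n > 2A$; once this is in hand, the Poisson limit and the extension to bounded measurable (not necessarily continuous) test functions are essentially automatic, so I expect no serious obstacle.
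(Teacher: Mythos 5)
Your proposal is correct and follows essentially the same route as the paper: both reduce the window $[-A,A]$ (for $n>2A$) to a $\mathrm{Bin}(p_n,2A/n)$-distributed number of points which, given their number, are i.i.d.\ uniform on $[-A,A]$, and both use $p_n/n\to 1-y_0$ almost surely. The only difference is in the finishing step — the paper computes the characteristic function of $\sum_k f(w_{n,k})$ and identifies the limit via Campbell's theorem, while you invoke the binomial-to-Poisson limit together with the conditional-uniform description of a Poisson process restricted to an interval — which are equivalent, standard ways to conclude.
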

\begin{proof}
Let $f : \R \to \C$ such that $\mathrm{supp} f \subset [-M, M]$, $M>0$. Let $t\in \R$. Let $n>2M$.
First note that by periodicity of the points $w_{n,k}$, 
\[\E \left( \mathrm{e}^{it\sum\limits_k f (w_{n,k})} \right) = \E \left( \mathrm{e}^{it\sum\limits_{k=1}^{X_n} f (\phi_k)} \right) \]
where $X_n$ is a random variable which counts the number of the points $w_{n,k}$ lying in $[-M,M]$, and where the $\phi_k$ are i.i.d random variables uniformly chosen on $[-M,M]$, independently of $X_n$. Moreover, $X_n$ is binomial of parameters $p_n$ and $2M/n$. Hence,
\begin{align*}
\E \left( \mathrm{e}^{it\sum\limits_k f (w_{n,k})} \right) &= \E \left( \E (\mathrm{e}^{it f(\phi_1 )})^{X_n} \right) \\
	&= \sum_{k=0}^{p_n} \E (\mathrm{e}^{it f(\phi_1 )})^k \binom{p_n}{k} \left(\frac{2M}{n}\right)^k \left( 1 - \frac{2M}{n} \right)^{p_n -k}  \\
	&= \left( 1+ \frac{2M}{n} (\E (\mathrm{e}^{it f(\phi_1 )})-1)\right)^{p_n} \\
	&= \left( 1 + \frac{1}{n} \int_{-M}^M \left( \mathrm{e}^{it f(x)} -1 \right) \mathrm{d}x \right)^{p_n} \\
	&\underset{n\to +\infty}{=} \exp \left( (1-y_0) \int_{-M}^M \left( \mathrm{e}^{it f(x)} -1 \right) \mathrm{d}x   \right) + o(1),
\end{align*}
since $p_n/n \to 1-y_0$ almost surely. Thus, the Fourier transform of $\sum\limits_k f (w_{n,k})$ converges to the Fourier transform of $T:=\sum\limits_{x\in N} f(x)$, where $N$ is a homogeneous Poisson point process of parameter $1-y_0$ (for example the expression of the Fourier transform of $T$ can be provided using the Campbell theorem), which gives the claim.
\end{proof}

\begin{prop}\label{prop:topoisson1}
For all $A$, 
\[ \prod_{\vert w_{n,k} \vert < A} \left( 1 - \frac{z}{w_{n,k}} \right) \underset{n\to +\infty}{\Longrightarrow } \prod_{\vert w_k \vert < A} \left( 1 - \frac{z}{w_k} \right)\]
where $\{w_k : k\in \Z\}$ are points of a Poisson process with intensity $1-y_0$ on $\R$.
\end{prop}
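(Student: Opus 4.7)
The plan is to realise both sides as random entire functions of $z$ whose zeros are the points of a finite point process in $(-A, A)$, establish convergence in distribution of these configurations jointly with their cardinality, and then conclude by continuous mapping in the space of entire functions. This is more direct than going through Lemma~\ref{lem:topoisson} itself, because the natural test function $\mathds{1}_{\vert w\vert<A}\log(1-z/w)$ is unbounded near $w=0$ and therefore not admissible in that lemma.

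First, I would make the law of the finite configuration $\{w_{n,k} : \vert w_{n,k}\vert < A\}$ explicit. Fix $A>0$ and take $n>2A$. For each $k\in\{1,\dots,p_n\}$, the arithmetic progression $(n(\Phi_k-j))_{j\in\Z}$ meets $(-A,A)$ in at most one point, and this happens exactly when $\Phi_k$ is within $A/n$ of $\{0,1\}$, an event of probability $2A/n$; conditional on that event, the resulting point is uniformly distributed on $(-A,A)$. Hence, conditionally on $(y,p_n)$, the cardinality $N_{n,A}:=\#\{k : \vert w_{n,k}\vert<A\}$ is $\mathrm{Binomial}(p_n,2A/n)$, and given $N_{n,A}$ the points themselves are i.i.d.\ uniform on $(-A,A)$.

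Passing to the limit, the strong law of large numbers gives $p_n/n\to 1-y_0$ almost surely (conditionally on $y$), so the classical Poisson approximation yields that $N_{n,A}$ converges in distribution to $N_A \sim \mathrm{Poisson}(2A(1-y_0))$, while the positions remain i.i.d.\ uniform on $(-A,A)$ at every $n$. The joint limit is precisely the description of a homogeneous Poisson process of intensity $1-y_0$ restricted to $(-A,A)$, which matches the right-hand side of the statement. Since for each fixed $N$ the map $(u_1,\dots,u_N)\mapsto \bigl(z\mapsto \prod_{k=1}^N (1-z/u_k)\bigr)$ is continuous from $(\R\setminus\{0\})^N$ into the space of entire functions endowed with the compact-open topology, and since the diffuse uniform law on $(-A,A)$ puts no mass at $0$, the continuous mapping theorem applies and delivers the $\Longrightarrow$ convergence conditionally on $y$; dominated convergence then yields the unconditional statement.

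The main subtlety I expect will be handling the joint convergence of the random cardinality and the random positions in an appropriate topology on finite configurations; the explicit binomial-times-uniform description at finite $n$, followed by the classical Poisson approximation, reduces this to a finite-dimensional convergence and sidesteps any delicate truncation argument near the origin.
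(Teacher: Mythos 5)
Your proposal is correct, and it takes a genuinely different route from the paper. The paper first proves a process-level statement (Lemma~\ref{lem:topoisson}): the empirical measure $\mu_n$ of the points $w_{n,k}$ converges vaguely in distribution to that of a Poisson process of intensity $1-y_0$, shown by computing the characteristic functional $\E\bigl(\mathrm{e}^{it\sum_k f(w_{n,k})}\bigr)$ for compactly supported $f$ (using exactly the binomial-count-plus-i.i.d.-uniform-positions structure you exploit, but on a window $[-M,M]$); it then deduces the proposition by the continuous mapping theorem applied to the functional $\sum_k\delta_{\alpha_k}\mapsto\prod_k(1-z/\alpha_k)\mathds{1}_{0<\vert\alpha_k\vert<A}$, which is continuous at every point measure charging none of $-A,0,A$ — a property the limiting Poisson configuration has almost surely. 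So your worry that the relevant test function is inadmissible in the lemma is not how the paper uses it: the lemma is only the input to a continuous-mapping step on the space of point measures, and the singularity at the origin is handled by an a.s.-continuity-point argument rather than by truncation. Your route instead identifies the exact law of the restricted configuration at finite $n$ (Binomial$(p_n,2A/n)$ count, i.i.d.\ uniform positions on $(-A,A)$, for $n>2A$), notes that the conditional law of the product given the count is the same for every $n$ and for the limit, and reduces everything to the law of small numbers for the count; this is more elementary and self-contained, and it sidesteps both the characteristic-functional computation and the continuity discussion at $0$ and $\pm A$ (only diffuseness of the laws is needed). What the paper's approach buys in exchange is the stand-alone convergence of the full point process, which is conceptually cleaner and reusable, whereas your argument is tailored to the window $(-A,A)$. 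One small point to make explicit if you write this up: the convergence Binomial$(p_n,2A/n)\Rightarrow\mathrm{Poisson}(2A(1-y_0))$ should be argued conditionally on $y$ (and on the a.s.\ event $p_n/n\to 1-y_0$), exactly as you indicate at the end, and the joint statement with the positions follows because the conditional law given the count does not depend on $n$.
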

\begin{proof}
Let $A>0$. Let $\mathcal{M}$ denote the space of locally finite measures of the form $\sum\limits_k \delta_{\alpha_k}$ for some arbitrary real numbers $\alpha_k$. Let $F$ be the functional defined from $\mathcal{M}$ to $\mathcal{C}(\C , \C)$ by
\[F \left(\sum\limits_k \delta_{\alpha_k} \right) = \prod\limits_k \left( 1- \frac{z}{\alpha_k} \right)\mathds{1}_{0<\vert \alpha_k \vert < A}. \]
$F$ is continuous at every measure which does not charge $-A$, $0$, and $A$. Since almost surely $\mu_\infty$ (the empirical measure associated with the Poisson process $\{w_k : \ k\in \Z \}$) does not charge these three points, $F$ is continuous in $\mu_\infty$. By Lemma~\ref{lem:topoisson} and the continuous mapping theorem we deduce $F(\mu_n ) \to F(\mu_\infty )$, which gives the claim.
\end{proof}

\begin{prop}\label{prop:topoisson2}
For all $\varepsilon >0$, for all compact subsets $K$ of $\C$,
\[\sup_{n\in \N^*} \PP \left( \sup_{z\in K} \left\vert \prod_{ \vert w_{n,k} \vert \geq A } \left( 1- \frac{z}{w_{n,k}}  \right) -1 \right\vert \geq \varepsilon \right) \underset{A \to +\infty}{\longrightarrow} 0.\] 
\end{prop}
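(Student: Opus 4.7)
The plan is to take logarithms and expand each tail factor in a power series of $z/w_{n,k}$, then control each order separately. Fixing a compact $K\subset\C$ with $R:=\sup_{z\in K}|z|$, I would restrict to $A>2R$, so that every tail factor satisfies $|z/w_{n,k}|<1/2$ and its logarithm admits a Taylor expansion that converges uniformly on $K$. Summing gives
\[
\log\prod_{|w_{n,k}|\geq A}(1-z/w_{n,k})=-\sum_{m\geq 1}\frac{z^m}{m}\,T_{n,m,A},\qquad T_{n,m,A}:=\sum\nolimits'_{|w_{n,k}|\geq A} w_{n,k}^{-m},
\]
where the prime denotes the principal value in $j$ for each fixed $k$ (needed only for $m=1$; for $m\geq 2$ the sum is absolutely convergent). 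The proposition reduces to showing that each $T_{n,m,A}$ tends to $0$ in probability as $A\to\infty$, uniformly in $n$, with enough decay in $m$.

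For the higher orders $m\geq 2$, the values $\{|n(\Phi_k-j)|\}_{j\in\Z}$ form, for each $k$, an $n$-spaced subset of $[0,\infty)$; an integral comparison then gives the per-$k$ bound $\sum_{j:\,|n(\Phi_k-j)|\geq A}|n(\Phi_k-j)|^{-m}\leq 2A^{-m}+2\bigl((m-1)nA^{m-1}\bigr)^{-1}$. Summing over $k\leq p_n\leq n$ yields the deterministic estimate $|T_{n,m,A}|\leq C_m/A^{m-1}$ uniform in $n$, so $\sum_{m\geq 2}(R^m/m)|T_{n,m,A}|=O(R^2/A)\to 0$ uniformly, which disposes of all the higher orders.

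The principal difficulty is the first-order term. Using the Mittag-Leffler identity $\pi\cot(\pi\phi)=\sum\nolimits'_{j\in\Z}(\phi-j)^{-1}$, one rewrites
\[
T_{n,1,A}=\frac{1}{n}\sum_{k=1}^{p_n}g_A(\Phi_k),\qquad g_A(\phi):=\pi\cot(\pi\phi)-\sum_{\substack{j\in\Z\\|\phi-j|<A/n}}\frac{1}{\phi-j},
\]
and the crucial structural observation is the antisymmetry $g_A(1-\phi)=-g_A(\phi)$, which follows from $\cot(\pi(1-\phi))=-\cot(\pi\phi)$ together with the involution $j\mapsto 1-j$ of the center set $\{j\in\Z:|\phi-j|<A/n\}$. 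This forces $\E[g_A(\Phi_1)]=\int_0^1 g_A(\phi)\,d\phi=0$ for every $A$ and $n$, so that $\E[T_{n,1,A}\mid p_n]=0$. Without this symmetry the first-order term would carry a bias of order one and the product could not approach $1$ uniformly in $n$; this is the step I expect to be the most delicate, since the mean vanishes by cancellation rather than by decay, and everything else hinges on it.

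Finally I would bound the variance by proving $\int_0^1 g_A(\phi)^2\,d\phi\leq Cn/A$ uniformly, splitting into two regimes. If $A\leq n$, then the center set is empty or a single integer; on the bulk $(A/n,1-A/n)$ one has $g_A(\phi)=\pi\cot(\pi\phi)$ and the identity $\cot^2=\csc^2-1$ gives $\int_{A/n}^{1-A/n}\pi^2\cot^2(\pi\phi)\,d\phi\leq 2n/A$, while the two edge zones of total measure $2A/n$ contribute only $O((A/n)^3)$ by the Laurent expansion $\pi\cot(\pi\phi)-1/\phi=O(\phi)$. If $A\geq n$, a symmetric pair-summation of the tail indices $j$ around $\phi$ gives the uniform pointwise bound $|g_A(\phi)|\leq Cn/A$, whence $\int g_A^2\leq C(n/A)^2\leq Cn/A$. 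Either way $\V[T_{n,1,A}\mid p_n]\leq Cp_n/(nA)\leq C/A$, and Chebyshev delivers $\PP(|T_{n,1,A}|\geq\delta)\leq C/(A\delta^2)\to 0$ uniformly in $n$. Combining this with the higher-order estimate and using $|e^w-1|\leq|w|e^{|w|}$ to pass from $\sup_K|\log(\mathrm{tail})|$ to $\sup_K|\mathrm{tail}-1|$ delivers the required uniform probability bound.
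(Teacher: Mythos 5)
Your treatment of the orders $m\geq 2$ contains a genuine error: the bound you call deterministic is false. Summing your (correct) per-$k$ estimate over the $p_n$ admissible values of $k$ gives
\[
\vert T_{n,m,A}\vert \;\leq\; 2p_n A^{-m} + \frac{2p_n}{(m-1)\,n\,A^{m-1}} \;\leq\; 2nA^{-m} + \frac{2}{(m-1)A^{m-1}},
\]
and the first term is \emph{not} $O(A^{-(m-1)})$ uniformly in $n$: for fixed $A$ it blows up as $n\to\infty$, and it can essentially be attained when many of the $\Phi_k$ cluster near $A/n$ (an event of small but positive probability), so the claimed sure bound $\sum_{m\geq 2}(R^m/m)\vert T_{n,m,A}\vert = O(R^2/A)$ uniform in $n$ cannot hold. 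The repair is standard and keeps your architecture intact: for each $k$ the progression $\{n(\Phi_k-j)\}_{j\in\Z}$ has exactly one point uniformly distributed in each interval $[ln,(l+1)n)$, so $\E\bigl[\sum_{j:\,\vert n(\Phi_k-j)\vert\geq A}\vert n(\Phi_k-j)\vert^{-m}\bigr]=\frac{2}{(m-1)nA^{m-1}}$, hence $\E\vert T_{n,m,A}\vert\leq \frac{2}{(m-1)A^{m-1}}$ uniformly in $n$, and Markov's inequality finishes the higher orders. This is exactly how the paper handles its second-order term $\sum \vert w_{n,k}\vert^{-2}$ (into which it lumps all orders $m\geq 2$, legitimate once $A>2R$).

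The first-order step, which you rightly identify as the crux, is correct: the identity $T_{n,1,A}=\frac1n\sum_k g_A(\Phi_k)$ via the Mittag--Leffler expansion of $\pi\cot(\pi\phi)$, the antisymmetry $g_A(1-\phi)=-g_A(\phi)$ (the center set is stable under $j\mapsto 1-j$), hence $\E[T_{n,1,A}\mid p_n]=0$, and the variance bound $\int_0^1 g_A^2\leq Cn/A$ in the two regimes all check out (minor quibble: for $A/n\in(1/2,1]$ the center set can contain two integers, but then $g_A$ is bounded by an absolute constant and the estimate survives; also your principal value in $j$ agrees with the paper's symmetric truncation in $\vert w_{n,k}\vert$, the discrepancy being $O(p_n/(nB))\to 0$). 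The paper reaches the same $A^{-1/2}$ scale by a different route: it Abel-sums $\sum_{\vert w_{n,k}\vert\geq A}w_{n,k}^{-1}$ against the empirical measure $\mu_n$, uses that $\mu_n[A,x]$ and $\mu_n[-x,-A]$ are equidistributed (the same cancellation your antisymmetry expresses pointwise) together with the binomial variance of the counts to get $\E\vert\cdot\vert\leq \pi/\sqrt{A}$, where you use Chebyshev with $\V\leq C/A$. The two are first/second-moment arguments of comparable strength, so once the $m\geq 2$ step is redone in expectation your proof goes through.
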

\begin{proof}
Let $K$ a compact subset of $\C$ of diameter $D$ for the uniform norm, and let $A>2D$. For all $z\in K$, 
\begin{equation}\label{eq:majProdtoPoisson}
\prod_{ \vert w_{n,k} \vert \geq A } \left( 1- \frac{z}{w_{n,k}}  \right) = \exp \left( -z \sum_{ \vert w_{n,k} \vert \geq A} \frac{1}{w_{n,k}} \right) \exp \left( \mathcal{O}_K \left( \sum_{ \vert w_{n,k} \vert \geq A} \frac{1}{w_{n,k}^2}  \right) \right).
\end{equation}
The sum $\sum\limits_{ \vert w_{n,k} \vert \geq A} \frac{1}{w_{n,k}}$ is not absolutely convergent. Let $B>A$. Integrating by parts, 
\begin{align*}
\sum_{A\leq \vert w_{n,k} \vert \leq B} \frac{1}{w_{n,k}} &= \int_{-B}^{-A} \frac{1}{x} \mathrm{d}\mu_n (x) + \int_{A}^{B} \frac{1}{x} \mathrm{d}\mu_n (x)  \\
	&= \frac{\mu_n [A, B] -\mu_n [-B, -A]}{B} + \int_{A}^{B} \frac{\mu_n [A, x]-\mu_n [-x, -A]}{x^2} \mathrm{d}x.
\end{align*}
As for all $a,b\in \R$, $\left\vert \mu_n [a,b] - \frac{(b-a)}{n}p_n \right\vert \leq 2p_n$, then $\lim\limits_{B\to +\infty} \frac{\mu_n [A, B] -\mu_n [-B, -A]}{B} = 0$, and we get
\[\sum_{ \vert w_{n,k} \vert \geq A} \frac{1}{w_{n,k}}  = \int_{A}^{+\infty} \frac{\mu_n [A, x]-\mu_n [-x, -A]}{x^2} \mathrm{d}x. \]
Hence 
\begin{align*}
\E \left\vert \sum_{ \vert w_{n,k} \vert \geq A} \frac{1}{w_{n,k}} \right\vert  \leq \int_{A}^{+\infty} \frac{1}{x^2} \E \vert \mu_n [A, x]-\mu_n [-x, -A] \vert \mathrm{d}x,
\end{align*}
with for all $x>A$, by Cauchy-Schwarz inequality, 
\begin{align*}
(\E \vert \mu_n [A, x]-\mu_n [-x, -A] \vert)^2 &\leq \E ((\mu_n [A, x]-\mu_n [-x, -A])^2) \\
	&= \V (\mu_n [A, x]-\mu_n [-x, -A]) \\
	&\leq 2 (\V (\mu_n [A, x]) + \V (\mu_n [-x, -A])) \\
	&= 4 \V (\mu_n [A,x])
\end{align*}
since $\mu_n [A, x]$ and $\mu_n [-x, -A]$ are equally distributed (consequence of the fact that the $\Phi_k$ are uniformly distributed on $[0,1)$). \\
As each interval of the form $[jn, (j+1)n)$ contains exactly $p_n$ points, and these points are uniformly distributed, then $\V (\mu_n [A,x])$ is the variance of a binomial random variable of parameters $p_n$ and $\left\{\frac{x-A}{n} \right\}$, that is   
\begin{align*}
 \V (\mu_n [A,x]) &= p_n \left\{\frac{x-A}{n} \right\} \left( 1-  \left\{\frac{x-A}{n} \right\} \right) \\
 	&\leq p_n \frac{x-A}{n} \\
 	&\leq x-A.
\end{align*}
We deduce
\begin{align*}
\E \left\vert \sum_{ \vert w_{n,k} \vert \geq A} \frac{1}{w_{n,k}} \right\vert  &\leq 2\int_{A}^{+\infty} \frac{\sqrt{x-A}}{x^2}  \mathrm{d}x  = \frac{\pi}{\sqrt{A}} \underset{A \to +\infty} {\longrightarrow} 0.
\end{align*}
Moreover,
\[\E \sum_{ \vert w_{n,k} \vert \geq A} \frac{1}{w_{n,k}^2} = \int_{-\infty}^{-A} \frac{1}{x^2} \frac{p_n}{n} \mathrm{d}x +  \int_A^{+\infty} \frac{1}{x^2} \frac{p_n}{n} \mathrm{d}x \leq \frac{2}{A} \underset{A \to +\infty} {\longrightarrow} 0.\]
From \eqref{eq:majProdtoPoisson} we deduce that $z\mapsto \prod\limits_{ \vert w_{n,k} \vert \geq A } \left( 1- \frac{z}{w_{n,k}}  \right)$ converges in probability to $z\mapsto 1$ on every compact sets as $A$ goes to $+\infty$, uniformly in $n$, which gives the claim.
\end{proof}

Proposition~\ref{prop:topoisson1} and Proposition~\ref{prop:topoisson2} together show
\[ \prod_{k=1}^{p_n} \prod_{j\in \Z} \left( 1 - \frac{z}{n(\Phi_k -j)} \right) \underset{n\to +\infty}{\Longrightarrow} \prod_{k\in \Z} \left( 1 - \frac{z}{w_k} \right)\]
(see remark of Theorem~\ref{thm:convergence2} for the sense given to this last non-absolutely convergent product),
hence
\[\prod_{k=1}^{p_n} \frac{\mathrm{e}^{2i\pi \frac{z}{n}} - v_k}{1-v_k} \underset{n\to +\infty}{\Longrightarrow} \mathrm{e}^{i\pi z (1-y_0)}\prod_{k\in \Z} \left( 1 - \frac{z}{w_k} \right). \]

Finally, in the same manner as in the proof of point $(i)$ of Theorem~\ref{thm:convergence}, we prove the almost sure convergence of $\prod\limits_{\substack{j\geq 1 \\ \ell_{n,j}>0}} \frac{\mathrm{e}^{2i\pi z y_j^{(n)}} - u_j}{1-u_j}$ to $\prod\limits_{j=1}^{+\infty} \frac{\mathrm{e}^{2i\pi z y_j} - u_j}{1-u_j}$. Conditionally on $y$ and $(u_j)_{j\geq 1}$, Slutsky's theorem applies on the fonctional space $\mathcal{C}(\C, \C)$, which allows to conclude using the dominated convergence theorem. 

\subsection{Proof of Theorem~\ref{thm:convergence2} (ii)}

The proof of point $(ii)$ is much simpler. Indeed, it suffices to see that for all $n$ and for all $z$ in any compact subset $K$ of $\C$,
\begin{align*}
\left(\frac{\mathrm{e}^{2i\pi \left( \frac{z}{n} + \alpha \right)} - 1}{\mathrm{e}^{2i\pi \alpha}- 1} \right)^{p_n} &= \left( 1 + \frac{\mathrm{e}^{2i\pi \alpha}}{\mathrm{e}^{2i\pi \alpha}- 1} \left( \mathrm{e}^{2i\pi \frac{z}{n}} -1   \right) \right)^{p_n} \\
	&=\exp \left(  p_n \left[  \frac{2i\pi\mathrm{e}^{2i\pi \alpha}}{\mathrm{e}^{2i\pi \alpha}- 1}  \frac{z}{n} + \mathcal{O}\left( \frac{1}{n^2} \right)\right] \right) \\
	&=\exp \left( \frac{p_n}{n} \frac{2i\pi z\mathrm{e}^{2i\pi \alpha}}{\mathrm{e}^{2i\pi \alpha}- 1}  + \mathcal{O}\left( \frac{1}{n} \right) \right) \\
	&= \exp \left( (1-y_0) \frac{2i\pi z\mathrm{e}^{2i\pi \alpha}}{\mathrm{e}^{2i\pi \alpha}- 1} \right) +o(1) 
\end{align*}
uniformly in $z\in K$, and for all $n$ large enough depending on $\alpha$ and $K$. \\
Finally, simplifying $\frac{2i \mathrm{e}^{2i\pi \alpha}}{\mathrm{e}^{2i\pi \alpha}- 1} = \frac{\mathrm{e}^{i\pi \alpha}}{\sin (\pi \alpha )} = \frac{1}{\tan (\pi \alpha )} + i$ gives the claim.

\subsection{Properties of the limiting functions}

\begin{lem}\label{lem:approxPoisson}
Let $\varepsilon>0$. Almost surely, for all $k\in \Z$, 
\begin{equation}
w_k = \frac{k}{1-y_0} + \mathcal{O} \left( k^{\frac{1}{2}+\varepsilon} \right).
\end{equation}
\end{lem}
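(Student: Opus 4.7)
The plan is to represent the arrival times of the Poisson process as partial sums of i.i.d.\ exponential variables and then control their fluctuations by a Chernoff bound combined with Borel--Cantelli. Throughout the argument I condition on $y_0$ and set $\lambda := 1-y_0$; the case $y_0 = 1$ is vacuous by convention, so I may assume $\lambda > 0$. Since the restriction of the point process to $[0,\infty)$ is then a homogeneous Poisson process of intensity $\lambda$, the inter-arrival gaps $T_0 := w_0$ and $T_j := w_j - w_{j-1}$ for $j \geq 1$ form an i.i.d.\ sequence of $\mathrm{Exp}(\lambda)$ variables, and $w_k = \sum_{j=0}^{k} T_j$ for every $k \geq 0$, with $\E_{y_0}[w_k] = (k+1)/\lambda$.

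Next I would run a standard exponential moment computation on the centred sum $\sum_{j=0}^{k}(T_j - 1/\lambda)$. Using the moment generating function $\E_{y_0}[e^{t T_j}] = \lambda/(\lambda - t)$ and expanding $-\tfrac{t}{\lambda} - \log(1 - t/\lambda)$ near $0$, the standard optimization yields, for every $\varepsilon \in (0,1/2)$ and every $k$ sufficiently large,
\[
\PP_{y_0}\!\left( \left\vert w_k - \tfrac{k}{\lambda} \right\vert \geq k^{\frac{1}{2}+\varepsilon} \right) \leq 2\exp\!\left( -c_\lambda\, k^{2\varepsilon}\right)
\]
for some $c_\lambda > 0$. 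This upper bound is summable in $k$, so the Borel--Cantelli lemma implies that conditionally on $y_0$, almost surely there exists a random $k_0$ such that $|w_k - k/\lambda| \leq k^{1/2+\varepsilon}$ for every $k \geq k_0$. The additive term $1/\lambda$ separating $\E_{y_0}[w_k]$ from $k/\lambda$ and the finitely many indices $k < k_0$ are absorbed into the implicit $O$-constant.

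For negative indices I invoke the reflection symmetry of a homogeneous Poisson process on $\R$: the sequences $(-w_{-k})_{k\geq 1}$ and $(w_{k-1})_{k\geq 1}$ have the same distribution (both are the arrival times on a half-line of an independent copy of the process), so the same concentration estimate applies verbatim to the $w_{-k}$. Integrating over the law of $y_0$ preserves the almost-sure statement, giving the claim for every $\varepsilon \in (0,1/2)$; the regime $\varepsilon \geq 1/2$ is then immediate from the strong law of large numbers, which already gives $w_k = k/\lambda + o(k)$. The argument is essentially routine once the exponential representation is in place; the only mild bookkeeping obstacle lies in uniformizing the bound over the random intensity $\lambda = 1 - y_0$, but since the almost-sure statement needs only hold for each realization of $\lambda > 0$ separately, a Fubini-style conditioning suffices.
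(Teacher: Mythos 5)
Your argument is correct and follows exactly the route the paper sketches: it represents $w_k$ as a sum of i.i.d.\ exponential variables and controls the fluctuations at scale $k^{1/2+\varepsilon}$, which is precisely the moderate-deviation statement the paper invokes by citation (with $b_k = k^{1/2+\varepsilon}$), here made self-contained via a Chernoff bound and Borel--Cantelli, with the negative indices and the conditioning on the random intensity $1-y_0$ handled correctly.
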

We omit the proof of this lemma since it is a classical result on Poisson processes (for instance, for $k\geq 1$, $w_k$ reads as a sum of $k$ i.i.d. exponential variables, so one can apply general results of moderate deviations, see \emph{e.g.} \cite{eichelsbacher2003moderate} with $b_k = k^{\frac{1}{2} + \varepsilon}$).

\begin{prop}
For all $\varepsilon >0$, there exists a random number $C>0$ such that for all $z \in \C$, 
\begin{equation}
\vert \widetilde{\xi}_\infty (z) \vert \leq \mathrm{e}^{ C \vert z \vert \log (2+\vert z \vert ) }.
\end{equation}
\end{prop}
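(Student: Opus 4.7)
The plan is to exploit the explicit factorization of $\widetilde{\xi}_\infty$ given by Theorem~\ref{thm:convergence2}(i). Writing $\widetilde{\xi}_\infty(z) = F_1(z)\, F_2(z)\, F_3(z)$ with
\[F_1(z) := \prod_{j\geq 1} \frac{\mathrm{e}^{2i\pi z y_j} - u_j}{1-u_j}, \quad F_2(z) := \mathrm{e}^{i\pi z (1-y_0)}, \quad F_3(z) := \prod_{k\in \Z}\left(1-\frac{z}{w_k}\right)\]
(in the ordering specified in the remark following Theorem~\ref{thm:convergence2}), I first observe that $|F_2(z)| \leq \mathrm{e}^{\pi |z|}$ trivially, and that $|F_1(z)| \leq C_\varepsilon \mathrm{e}^{(2\pi + \varepsilon)|z|}$ by the very same argument as in the proof of Proposition~\ref{prop:xi}, which only uses the exponential decay of the sequence $(y_j)$. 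The whole task thus reduces to bounding the Poisson product $F_3$.

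For $F_3$, set $\lambda := 1-y_0$, fix a small $\varepsilon>0$, and apply Lemma~\ref{lem:approxPoisson}: almost surely there is a random integer $K\geq 1$ such that $|w_k| \geq |k|/(2\lambda)$ for every $|k| > K$. Set $N := \lceil 2\lambda |z|\rceil$ and decompose $F_3 = P_1 \cdot P_2 \cdot P_3$, where $P_1$ collects the factors with $|k|\leq K$, $P_2$ those with $K<|k|\leq N$, and $P_3$ is the tail, paired as $\prod_{k>N}(1-z/w_k)(1-z/w_{-k})$. The purpose of the threshold $K$ is to isolate the finitely many $w_k$ that can be close to $0$; beyond $K$, the lower bound on $|w_k|$ controls them uniformly in $k$.

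The three sub-products are bounded separately. $P_1$ has at most $2K+1$ factors, each bounded by $1 + |z|/m$ with $m := \min_{|k|\leq K}|w_k| > 0$ a positive random number, so $\log |P_1| = O(\log(2+|z|))$. For $P_2$, the crude termwise bound $\log|1-z/w_k| \leq \log(1 + 2\lambda |z|/|k|) \leq \log(1+2\lambda|z|)$ gives $\log |P_2| \leq 2N \log(1+2\lambda|z|) = O(|z|\log(2+|z|))$, which is the dominant contribution producing the stated bound. For $P_3$, I combine the pairing with the expansion $w_k = k/\lambda + O(|k|^{1/2+\varepsilon})$ from Lemma~\ref{lem:approxPoisson} to write
\[\left(1-\frac{z}{w_k}\right)\left(1-\frac{z}{w_{-k}}\right) = 1 - \frac{z^2 \lambda^2}{k^2} + O\left(\frac{|z|}{k^{3/2-\varepsilon}}\right) + O\left(\frac{|z|^2}{k^{5/2-\varepsilon}}\right);\]
summing logarithms for $k>N$ then yields $\log|P_3| = O(|z|)$, since each correction is summable in $k$ and $\sum_{k>N} z^2\lambda^2/k^2 = O(z^2/N) = O(|z|)$. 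The main obstacle is precisely this tail: the infinite product is only conditionally convergent, so a plain term-by-term upper bound diverges, and one has to rely on the paired structure together with the almost-sure fluctuation estimate of Lemma~\ref{lem:approxPoisson} to extract the cancellation. Adding the three contributions gives $\log|F_3(z)| = O(|z|\log(2+|z|))$, which together with the bounds on $F_1$ and $F_2$ concludes the proof.
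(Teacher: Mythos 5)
Your proof is correct and follows essentially the same route as the paper: bound the first factor as in Proposition~\ref{prop:xi}, absorb the exponential factor trivially, and for the Poisson product split at a scale of order $\vert z\vert$, bounding the finitely many inner factors termwise (which yields the dominant $O(\vert z\vert \log (2+\vert z\vert))$ term) and pairing $w_k$ with $w_{-k}$ in the tail via Lemma~\ref{lem:approxPoisson} to get an $O(\vert z\vert)$ contribution. The only differences (introducing the random threshold $K$ and cutting at $\lceil 2\lambda\vert z\vert\rceil$ instead of $\vert z\vert$, versus the paper's use of the a.s. lower bound on $\vert w_k/k\vert$ and a separate treatment of $k=0$) are cosmetic.
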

\begin{proof}
First, following the same reasoning as in Section~\ref{sec:properties}, it is easy to check that for all $\varepsilon >0$, there exists a random number $C_\varepsilon >0$ such that for all $z\in \C$,
\[\left\vert \left(\prod_{j=1}^{+\infty} \frac{\mathrm{e}^{2i\pi z y_j} - u_j}{1-u_j}\right) \right\vert \leq C_\varepsilon \mathrm{e}^{(2\pi y_0 + \varepsilon) \vert z \vert }. \]
Hence there exists a random number $c>0$ such that for all $z \in \C$, 
 \[\left\vert \left(\prod_{j=1}^{+\infty} \frac{\mathrm{e}^{2i\pi z y_j} - u_j}{1-u_j}\right) \mathrm{e}^{i\pi z (1-y_0)} \right\vert \leq \mathrm{e}^{ c \vert z \vert }. \]
Thus, it is enough to show that there exists a random number $C>0$ such that for all $z\in \C$,
\begin{equation}\label{eq:majPoisson}
\left\vert \prod_{k\in \Z} \left( 1 - \frac{z}{w_k} \right) \right\vert \leq \mathrm{e}^{C \vert z \vert \log (2 + \vert z \vert ) }.
\end{equation}
To this end, we distinguish between two regimes of $k\neq 0$ in this product: $\vert k \vert \geq \vert z \vert$, and $1\leq \vert k \vert \leq \vert z \vert$. For the first regime, using the previous lemma with $\varepsilon = \frac{1}{3}$ gives
\[\left( 1 - \frac{z}{w_k} \right) \left( 1 - \frac{z}{w_{-k}} \right) = 1 + \mathcal{O} \left(  \frac{\vert z \vert k^{\frac{1}{2} + \frac{1}{3}} + \vert z \vert^2}{k^2} \right),  \]
hence
\begin{align}\label{eq:majPoisson1}
\begin{split}
\left\vert \prod_{k \geq \vert z \vert} \left( 1 - \frac{z}{w_k} \right) \left( 1 - \frac{z}{w_{-k}} \right) \right\vert &\leq \exp \left( \mathcal{O} \left(\sum_{k\geq \vert z \vert} \vert z \vert k^{-\frac{7}{6}} +  \vert z \vert^2 k^{-2} \right) \right) \\
	&= \exp (\mathcal{O}(\vert z \vert )).
\end{split}
\end{align} 
For the second regime, as $\left\vert \frac{w_k}{k} \right\vert$ is almost surely bounded from below (since $\frac{w_k}{k} = \frac{1}{1-y_0} + \mathcal{O} \left( k^{-\frac{1}{2}+\varepsilon}  \right)$), we have 
\[1- \frac{z}{w_k} = 1 + \mathcal{O}\left( \left\vert \frac{z}{k} \right\vert \right) \]
and it follows
\begin{align}\label{eq:majPoisson2}
\begin{split}
\left\vert \prod_{1\leq  k < \vert z \vert} \left( 1 - \frac{z}{w_k} \right) \left( 1 - \frac{z}{w_{-k}} \right) \right\vert &\leq \exp \left( \mathcal{O} \left(\sum_{1\leq  k < \vert z \vert}  \frac{\vert z \vert}{k} 
 \right) \right) \\
	&= \exp (\mathcal{O}(\vert z \vert \log (2+ \vert z \vert ))).
\end{split}
\end{align}
Furthermore, 
\begin{equation}\label{eq:majPoisson3}
\left\vert 1 - \frac{z}{w_0} \right\vert \leq \exp \left( \frac{\vert z \vert}{\vert w_0 \vert} \right) = \exp (\mathcal{O}(\vert z \vert ))
\end{equation}
since $w_0\neq 0$ almost surely. Combining \eqref{eq:majPoisson1}, \eqref{eq:majPoisson2} and \eqref{eq:majPoisson3}, we deduce the existence of a random number $C>0$ such that for all $z\in \C$ we have \eqref{eq:majPoisson}, and the proof is complete.
\end{proof}

\begin{prop}
For all $\varepsilon >0$, there exists a random number $C_\varepsilon>0$ such that for all $z \in \C$, 
\begin{equation}
\vert \xi_{\infty , \alpha} (z) \vert \leq C_\varepsilon \mathrm{e}^{ (\varepsilon + 2\pi (y_0 + (1-y_0)t_\alpha) \vert z \vert },
\end{equation}
where $t_\alpha = \frac{1}{2\sin(\pi \alpha )} \in \left( \frac{1}{2} , + \infty \right)$.
\end{prop}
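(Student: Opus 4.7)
The plan is to write $\xi_{\infty,\alpha}$ as a product of two factors, namely the infinite product
\[\Pi(z) := \prod_{j\geq 1} \frac{\mathrm{e}^{2i\pi z y_j} - u_j}{1-u_j}\]
and the explicit exponential $\mathrm{e}^{i\pi z (1-y_0)(1 - i/\tan(\pi\alpha))}$, and to bound each factor separately.

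For $\Pi(z)$, I would adapt the proof of Proposition~\ref{prop:xi} essentially verbatim, with the single modification that the $y_j$ now sum to $y_0\leq 1$ rather than to $1$. Concretely, in the head estimate \eqref{eq:majxiinfty2} (for $j\leq k$), the bound $\sum_{j\leq k} y_j \leq 1$ is replaced by $\sum_{j\leq k} y_j \leq y_0$, producing the factor $\mathrm{e}^{2\pi y_0 |z|}$ in place of $\mathrm{e}^{2\pi |z|}$. The tail summability $\sum_j y_j/|1-u_j|<\infty$ and the prefactor estimate $\bigl(2/\min_{j\leq k}|1-u_j|\bigr)^k = \mathrm{e}^{o(|z|)}$ remain valid unchanged: the $u_j$ are still i.i.d. uniform on the unit circle, so Lemma~\ref{lem1} still applies, and the sequence $(y_j)$ retains its exponential decay by assumption on $p$. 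This yields a random constant $C_\varepsilon$ with $|\Pi(z)| \leq C_\varepsilon \exp((2\pi y_0 + \varepsilon)|z|)$.

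For the exponential factor, writing $z=a+ib$ and using $|\mathrm{e}^{iw}| = \exp(-\mathrm{Im}\, w)$ gives immediately
\[\left|\mathrm{e}^{i\pi z(1-y_0)(1 - i/\tan(\pi\alpha))}\right| = \exp\!\left(\pi(1-y_0)\left(\tfrac{a}{\tan(\pi\alpha)} - b\right)\right).\]
The Cauchy--Schwarz inequality then bounds the exponent by
\[\pi(1-y_0)|z|\sqrt{1+\cot^2(\pi\alpha)} \;=\; \frac{\pi(1-y_0)|z|}{\sin(\pi\alpha)} \;=\; 2\pi(1-y_0)\, t_\alpha\, |z|,\]
using that $\sin(\pi\alpha)>0$ for $\alpha\in(0,1)$. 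Multiplying the two bounds and relabeling $\varepsilon$ delivers the claimed estimate.

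There is no genuine obstacle: the only point to verify is that the argument of Proposition~\ref{prop:xi} truly goes through with $y_0$ in place of $1$, which is immediate since the remainder of that proof only relies on the exponential decay of $(y_j)$ and on the summability \eqref{eq:yjuj}, both of which continue to hold in the present, more general setting.
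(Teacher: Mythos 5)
Your proposal is correct and follows exactly the route the paper intends for this (omitted) proof: the bound $\vert \Pi(z)\vert \leq C_\varepsilon \mathrm{e}^{(2\pi y_0+\varepsilon)\vert z\vert}$ is precisely the adaptation of Proposition~\ref{prop:xi} that the paper already invokes in the preceding proposition, and your computation $\bigl\vert \mathrm{e}^{i\pi z(1-y_0)(1-i\cot(\pi\alpha))}\bigr\vert = \exp\bigl(\pi(1-y_0)(a\cot(\pi\alpha)-b)\bigr) \leq \exp\bigl(2\pi(1-y_0)t_\alpha\vert z\vert\bigr)$ is accurate. No gaps.
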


The proof of the last proposition is very similar to the one for the case $\sum_{j=1}^{+\infty} y_j=1$ almost surely. We omit it here and refer to Section~\ref{sec:properties}.

\vspace*{1cm}

\textbf{Acknowledgements}: The author is grateful to his PhD advisor Joseph Najnudel for suggesting the problem and for a significant help on the topic.

\bibliographystyle{plain}
\bibliography{biblio}

\end{document}